\RequirePackage[2022-06-01]{latexrelease}
\documentclass[hidelinks,onefigHmmnum,onetabnum]{siamart220329}

\usepackage{amsfonts}
\usepackage{graphicx}
\usepackage{epstopdf}
\usepackage{algpseudocode}
\usepackage{enumitem}
\usepackage{ifoddpage}

\usepackage{dsfont}
\usepackage{subcaption}

\ifpdf
  \DeclareGraphicsExtensions{.eps,.pdf,.png,.jpg}
\else
  \DeclareGraphicsExtensions{.eps}
\fi

\newsiamremark{remark}{Remark}
\newsiamremark{hypothesis}{Hypothesis}
\crefname{hypothesis}{Hypothesis}{Hypotheses}
\newsiamthm{claim}{Claim}

\headers{$\mathcal{H}_2$ model reduction for BQO systems}{H. Fa\ss bender, S. Gugercin and T. Peters}

\title{
Bilinear Systems with Quadratic Outputs: \\ $\mathcal{H}_2$ Analysis, Optimality Conditions for Model Reduction, and Algorithmic Solutions 
\thanks{Submitted to the editors DATE.
\funding{Part of this work was completed during a research stay of the third author at Virginia Tech supported by a fellowship of the German Academic Exchange Service (DAAD). The work of Gugercin was supported in part by the National Science Foundation (NSF), United States under Grant No. CMMI-2130695}}}

\author{Heike Fa\ss bender \thanks{Institute for Numerical Analysis, TU Braunschweig
  (\email{h.fassbender@tu-braunschweig.de}).} 
\and Serkan Gugercin\thanks{Department of Mathematics and Division of Computational Modeling and Data Analytics, Academy of Data Science, Virginia Tech
  (\email{gugercin@vt.edu}).}\and Till Peters\thanks{Institute for Numerical Analysis, TU Braunschweig 
  (\email{till.peters@tu-braunschweig.de}), corresponding author.}}

\DeclareMathOperator{\diag}{diag}

\DeclareMathOperator{\tr}{tr}

\DeclareMathOperator{\vect}{vec}
\DeclareMathOperator{\orth}{orth}

\newcommand{\R}{\mathbb{R}}

\newcommand{\sH}{\mathcal{H}}
\newcommand{\norm}[1]{\left\lVert#1\right\rVert}
\newcommand{\abs}[1]{\left\lvert#1\right\rvert}
\newcommand{\skp}[2]{\left\langle {#1}, {#2} \right\rangle}

\ifpdf
\hypersetup{
  pdftitle={Bilinear quadratic output systems and related model reduction techniques},
  pdfauthor={H. Fa\ss bender, S. Gugercin and T. Peters}
}
\fi

\begin{document}

\maketitle

\begin{abstract}
    Bilinear systems with quadratic outputs (BQO) have recently 
emerged as an important system class, arising naturally in 
applications where both the dynamics and the quantities of interest 
depend nonlinearly on the state. Despite the growing interest in 
this class of systems, a systematic $\mathcal{H}_2$ framework for BQO systems 
has been lacking. In this paper, we develop such a framework by 
establishing an $\mathcal{H}_2$ inner product and norm for BQO 
systems, deriving output bounds in terms of 
the $\mathcal{H}_2$ norm, and obtaining first-order optimality 
conditions for $\mathcal{H}_2$ optimal model reduction. Building on 
these theoretical foundations, we propose an algorithm that computes 
a reduced BQO system satisfying these optimality conditions, and thus 
generalizing existing $\mathcal{H}_2$ optimal methods for bilinear 
and linear quadratic output systems. The effectiveness of the 
proposed framework is demonstrated on two numerical examples.
\end{abstract}

\begin{keywords}
    $\mathcal{H}_2$ model order reduction, balanced truncation, bilinear systems, linear quadratic output systems
\end{keywords}

\begin{MSCcodes}
93A15, 93B40, 93C10, 93C15
\end{MSCcodes}

\section{Introduction}
\label{sec:1}

Bilinear systems with quadratic outputs (BQO systems) arise naturally in applications where state-dependent nonlinearities appear in both the dynamics and the measured response. Following the framework introduced in~\cite{padhi_2024, FasGP25}, such a system takes the state-space form
\begin{subequations}\label{eq:BQO}
\begin{align}
  \dot{x}(t) &= Ax(t) + \sum_{k=1}^{m} N_k x(t)\, u_k(t) + Bu(t), 
              \quad x(0) = 0, \label{eq:BQO_state} \\
  y(t)       &= Cx(t)+\begin{bmatrix}
        x(t)^TM_1x(t) \\ \vdots \\ x(t)^TM_px(t)
    \end{bmatrix} = Cx(t) + \mathfrak{M}\bigl(x(t) \otimes x(t)\bigr),
              \label{eq:BQO_output}
\end{align}
\end{subequations}
where $ \mathfrak{M} 
    = \begin{bmatrix} \vect(M_1) \, \dots \, \vect(M_p) 
       \end{bmatrix}^T.$
Here, $A \in \mathbb{R}^{n \times n}$ is a stable matrix, i.e., all eigenvalues of $A$ lie in the left half-plane, 
$B \in \mathbb{R}^{n \times m}$, $C \in \mathbb{R}^{p \times n}$, 
$N_k \in \mathbb{R}^{n \times n}$ for $k = 1, \ldots, m$, and 
$M_j \in \mathbb{R}^{n \times n}$ for $j = 1, \ldots, p$.  Without loss of generality, we assume that each matrix $M_j$ is 
symmetric. We take the time horizon to be $t \in [0, \infty)$.

For notational convenience, we denote the BQO systems of the form~\eqref{eq:BQO} by
$\mathcal{S} = (A, B, C, \mathcal{N}, \mathcal{M})$, where the aggregated 
bilinear and quadratic-output matrix blocks are defined as
\begin{equation}\label{eq:NM_def}
  \mathcal{N} 
    := \begin{bmatrix} N_1 \, \dots \, N_m \end{bmatrix} 
       \in \mathbb{R}^{n \times nm},
  \qquad
  \mathcal{M} 
    := \begin{bmatrix} M_1 \, \dots \, M_p \end{bmatrix} 
       \in \mathbb{R}^{n \times np}.
\end{equation}

BQO systems unify and extend two well-studied system classes.  Setting $M_1 = \dots = M_p = 0$ recovers the classical bilinear  system (see, e.g., \cite{al-baiyat_new_1993,zhang_2002}), for which  the output reduces to the linear map $y(t) = Cx(t)$. Setting instead $N_1 = \dots = N_m = 0$ recovers the linear quadratic output (LQO) system (see, e.g., \cite{benner_lqo_2022,reiter_2024}), with dynamics $\dot{x}(t) = Ax(t) + Bu(t)$. BQO systems thus sit at the  intersection of bilinear dynamics and quadratic output maps, combining the challenges of both.

An important class of BQO systems arises when studying bilinear port-Hamilto\-nian systems of the form
\begin{subequations}\label{eq:BPH} 
\begin{align}
    \dot{x}(t) &= (J-R)Qx(t) + \left(B+Nx(t)\right)u(t),\\
    y(t) &= \left(B+Nx(t)\right)^TQx(t),
\end{align}
\end{subequations}
where $J,R,Q,N\in\mathbb{R}^{n \times n}$ and $B\in\mathbb{R}^{n\times 1}$ satisfy
$J=-J^T$, $R=R^T\geq 0$, and $Q=Q^T\geq 0$; see~\cite{MehU23,Van06}. The single-input single-output case ($m=p=1$) presented here can be easily extended to the multi-input multi-output setting for bilinear port-Hamiltonian systems. More generally, BQO systems can also arise as approximations of nonlinear systems, for instance, via Carleman bilinearization~\cite{rugh_nonlinear_1981}; see also \cite{hart13,mohler}.
Quadratic outputs as in \cref{eq:BQO_output} appear, for example, in modeling variances of stochastic systems
or energy-related quantities; see \cite{PulA19,Pul23,YueM13,ReiW24,VanVNLM12}.

In this work, we consider large-scale BQO systems, which typically arise, for example, from spatial discretizations of underlying partial differential equations.
This motivates the use of model order reduction to reduce the system dimension while preserving the input--output behavior up to a prescribed accuracy.
More precisely, we seek a reduced BQO system $\widehat{\mathcal{S}}=(\widehat{A},\widehat{B},\widehat{C},\widehat{\mathcal{N}},\widehat{\mathcal{M}})$ of order $r \ll n$ in state-space form
\begin{subequations}\label{eq:BQOr}
    \begin{align}
    \dot{\widehat{x}}(t) &= \widehat{A}\widehat{x}(t) + \sum_{k=1}^m \widehat{N}_k \widehat{x}(t) u_k(t)+ \widehat{B}u(t), \qquad \widehat{x}(0)=0, \label{eq:BQOr_a1}\\
    \widehat{y}(t)&=\widehat{C}\widehat{x}(t)+\begin{bmatrix}
        \widehat{x}(t)^T\widehat{M}_1\widehat{x}(t) \\ \vdots \\ \widehat{x}(t)^T\widehat{M}_p\widehat{x}(t)
    \end{bmatrix} = \widehat{C}\widehat{x}(t)+\widehat{\mathfrak{M}}\bigl(\widehat{x}(t)\otimes \widehat{x}(t)\bigr), \label{eq:BQOr_a2}
\end{align}
\end{subequations}
with $\widehat{A}\in\mathbb{R}^{r\times r}$, $\widehat{B}\in\mathbb{R}^{r \times m}$, $\widehat{C}\in \mathbb{R}^{p\times r}$,
$\widehat{N}_k\in\mathbb{R}^{r\times r}$ for $k=1,\dots, m$, $\widehat{M}_j\in \mathbb{R}^{r\times r}$ for $j=1,\dots, p$, and $ \widehat{\mathfrak{M}} 
    = \begin{bmatrix} \vect(\widehat{M}_1) \, \dots  \, \vect(\widehat{M}_p) 
       \end{bmatrix}^T$.
Similarly to the full BQO system, the block structures are defined as
\begin{equation*}
  \widehat{\mathcal{N}} 
    := \begin{bmatrix} \widehat{N}_1 \, \dots \, \widehat{N}_m \end{bmatrix} 
       \in \mathbb{R}^{r \times rm},
  \qquad
  \widehat{\mathcal{M}} 
    := \begin{bmatrix} \widehat{M}_1 \, \dots \, \widehat{M}_p \end{bmatrix} 
       \in \mathbb{R}^{r \times rp}.
\end{equation*}
The objective of model order reduction is to construct the reduced BQO system \cref{eq:BQOr} such that the reduced output $\widehat{y}(t)$ accurately approximates the original output $y(t)$ of the original BQO system~\eqref{eq:BQO}.

In this work, we develop an $\mathcal{H}_2$ model order reduction approach for BQO systems based on Petrov--Galerkin projections.
To this end, we approximate the state by $x(t) \approx V\widehat{x}(t)$, where $V \in \mathbb{R}^{n\times r}$ spans a low-dimensional subspace and  $\widehat{x}(t)$ is the reduced state, and choose a matrix $W \in \mathbb{R}^{n\times r}$ such that $W^TV$ is nonsingular.
Inserting this ansatz into \cref{eq:BQO} and projecting with $V(W^TV)^{-1}W^T$ yields the reduced BQO system \cref{eq:BQOr} with
\begin{align}
\begin{split}\label{eq_PetrovGalerkin}
& \widehat{A}=(W^TV)^{-1}W^TAV,\quad
\widehat{B}=(W^TV)^{-1}W^TB,\quad
\widehat{C}=CV,
\\
&\widehat{N}_k=(W^TV)^{-1}W^TN_kV, \quad k = 1, \ldots, m, \quad
\widehat{M}_j=V^TM_jV, \quad j = 1, \ldots, p.
\end{split}
\end{align}
The model reduction subspaces $V$ and $W$ completely determine the reduced model. Thus, the question becomes how to choose $V$ and $W$ for a high-fidelity approximant.

Balanced truncation (BT) for BQO systems has been investigated  in \cite{FasGP25} (see also \cite{padhi_2024,Gos24}), where $V$ and $W$ are chosen via the concept of balancing.  
The approach transforms the system into a balanced realization where reachability and observability Gramians coincide and are diagonal. The diagonal entries, known as Hankel singular values, measure the contribution of each state, and states associated with small singular values are truncated. To enable this construction, algebraic Gramians, generalizing the linear case and defined via generalized Lyapunov equations, are introduced and computed in \cite{FasGP25} (see also \cite{padhi_2024}).

In contrast, we propose an $\mathcal{H}_2$ based framework for 
choosing $V$ and $W$, developing what we believe is the first 
complete $\mathcal{H}_2$ theory for BQO systems. 
$\mathcal{H}_2$ 
model reduction has been extensively studied for bilinear systems 
(see, e.g., \cite{zhang_2002,benner_birka_2012,flagg_multipoint_2015,
redmann_bilinear_2021}) and for LQO systems 
(see, e.g., \cite{reiter_2024}), but no such framework has existed 
for BQO systems.
 The proposed framework naturally unifies and extends 
these two lines of work: the BQO setting recovers the existing 
$\mathcal{H}_2$ theories for bilinear and LQO systems as special 
cases. Building on the Gramians introduced in \cite{FasGP25}, we 
establish the $\mathcal{H}_2$ norm for BQO systems, derive optimality 
conditions for $\mathcal{H}_2$ optimal model reduction, and develop algorithms 
that construct reduced models satisfying these optimality conditions, 
thereby generalizing existing $\mathcal{H}_2$ optimal methods. 
Numerical experiments confirm that the proposed approach yields 
high-fidelity reduced models efficiently.
    
Our main contributions are the following.
\begin{itemize}
    \item We introduce an $\mathcal{H}_2$ inner product and norm for 
    BQO systems, extending the existing theory for bilinear and LQO 
    systems (\Cref{theo:normS}).
    \item We bound the output of a BQO system in terms of the 
    $\mathcal{H}_2$ norm of the system (\Cref{cor:yest}).
    \item We bound the output error between two BQO systems by the 
    $\mathcal{H}_2$ norm of their difference and a constant input 
    term (\Cref{cor:yerr_est}).
    \item We derive first-order optimality conditions for 
    $\mathcal{H}_2$ optimal model reduction of BQO systems 
    (\Cref{thm:grad}).
    \item We propose an algorithm that computes a reduced BQO system 
    satisfying these optimality conditions (\Cref{alg:BQO_TSIA}).
\end{itemize}

The paper is structured as follows: Preliminaries are discussed in \Cref{sec:2}. In \Cref{sec:3}, we review the definitions for Gramians of BQO systems and their associated Lyapunov equations. This section also includes definitions of similar generalized Sylvester equations. In \Cref{sec:4}, we develop the $\sH_2$ inner product and norm for BQO systems based on kernels for BQO systems. This new analysis is used in \Cref{sec:5} to estimate the output of a BQO system which leads to an estimate of the output error comparing two BQO systems. In \Cref{sec:6}, this error estimate motivates us to perform $\sH_2$ model order reduction for BQO systems where we derive optimality conditions and construct a iterative algorithm to find a suitable reduced BQO system. Finally, in \Cref{sec:7}, we present numerical results and compare the proposed algorithm with balanced truncation for BQO systems.

\section{Preliminaries}\label{sec:2}

In the following, we will make use of the standard Householder notation, labeling matrices with upper-case Roman letters ($A, B,$ etc.), vectors with lower-case Roman letters ($a, b,$ etc.), and scalars with lower-case Greek letters ($\alpha, \beta$, etc.). 
When labeling the columns or rows of a matrix, the columns are usually denoted by the corresponding lowercase Roman letter, indexed by an integer $i, j, k, m$, or $p$.

The spectral norm will be denoted by $\norm{\cdot}=\norm{\cdot}_2$. 
Furthermore, we use $X \geq 0$ to denote the symmetric positive semidefiniteness of a matrix $X \in \mathbb{R}^{n \times n}$. Similarly, $X > 0$ denotes a symmetric positive definite matrix $X$.

In addition, we will employ the standard Kronecker product notation as defined in, e.g., \cite[Sections 4.2, 4.3]{HorJ94} or \cite[pp.~27-28]{GolubVanLoan4th}. More precisely, we will use the fact that $(A\otimes B)(C \otimes D) = AC \otimes BD$ holds for matrices of appropriate size, and
\begin{subequations}  
\begin{align}  
\tr(X^TY)&= \vect(X)^T\vect(Y), \label{eq:id_tr} \\
\vect(XYZ)&=(Z^T\otimes X)\vect(Y),\label{eq:id_vect_kron}
\end{align} 
\end{subequations}
where  $\tr(A)$ denotes the trace of the square matrix $A$ and $\vect(X)$ the vectorization of a rectangular matrix $X$. We use $\diag(A_1,A_2)=\left[\begin{smallmatrix}
    A_1 & \\ &A_2
\end{smallmatrix}\right]$ to denote the block-diagonal matrix formed  by $A_1$ and $A_2$.
We note that these properties have already been used to re-write \eqref{eq:BQO_output} in a compact form with $\mathfrak{M}$ as done in the LQO case in~\cite{reiter_2024}. Moreover, let us recall that $\norm{I \otimes X} = \norm{X}$ holds. Whenever useful, we denote an identity matrix of dimension $n$ by $I_n$ instead of just $I$ in order to make to derivations easier to follow.

We will make use of the following fact:
\begin{proposition}{\cite[Lemma 2.1]{FasGP25}} \label{lem:lem1}
    Let $R\in\R^{n\times n}$ and 
    $\mathcal{G}:=\begin{bmatrix}
        G_1 \dots G_q 
    \end{bmatrix}, \mathcal{H}:=\begin{bmatrix}
        H_1^T  \dots  H_q^T 
    \end{bmatrix}^T$,
    with $G_k,H_k\in\R^{n\times n}$ for all $k=1,\dots, q$. Then, 
    \begin{align*}
        \mathcal{G}(I_q\otimes R)\mathcal{H}=\sum_{k=1}^qG_kRH_k.
    \end{align*}
\end{proposition}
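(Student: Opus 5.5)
The identity follows from a direct block-matrix computation. I will write $I_q\otimes R$ explicitly as the block-diagonal matrix $\diag(R,\dots,R)$ with $q$ diagonal blocks of size $n\times n$. Here $\mathcal{G}=\begin{bmatrix} G_1 \dots G_q\end{bmatrix}$ is a $1\times q$ block row with blocks of size $n\times n$, and $\mathcal{H}$ is a $q\times 1$ block column with blocks $H_k\in\R^{n\times n}$. All block partitions are therefore conformal, so the product can be evaluated blockwise.

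First, I will compute $\mathcal{G}(I_q\otimes R)$. Multiplying a block row by a block-diagonal matrix scales each block on the right by the corresponding diagonal block. This gives
\begin{align*}
\mathcal{G}(I_q\otimes R)=\begin{bmatrix} G_1R & \dots & G_qR\end{bmatrix}.
\end{align*}
Second, I will multiply this block row by the block column $\mathcal{H}$. By the rules of block multiplication, the result is the sum of the products of corresponding blocks:
\begin{align*}
\begin{bmatrix} G_1R & \dots & G_qR\end{bmatrix}\begin{bmatrix} H_1 \\ \vdots \\ H_q\end{bmatrix}=\sum_{k=1}^q G_kRH_k,
\end{align*}
which is the claim.

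No step poses a real obstacle. The only point to check is that the partitions are conformal: each $G_k$ has $n$ columns, matching the $n$ rows of $R$, and $R$ has $n$ columns, matching the $n$ rows of $H_k$. This is guaranteed by the stated dimensions.

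Alternatively, one could verify the identity entrywise via the index formula $(I_q\otimes R)_{(k,i),(l,j)}=\delta_{kl}R_{ij}$. The Kronecker delta collapses the double block sum to a single sum over $k$. The block argument above is shorter, and I would use it.
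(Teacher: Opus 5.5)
Your proposal is correct: writing $I_q\otimes R=\diag(R,\dots,R)$ and multiplying the conformally partitioned blocks gives $\sum_{k=1}^q G_kRH_k$ directly. The paper gives no proof of its own here and simply cites Lemma 2.1 of the earlier balanced-truncation paper, so your direct block computation is the standard argument and fully suffices.
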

This result immediately leads to the following corollary.
\begin{corollary}
    \label{lem:id_M}
    For $\mathfrak{M}=[\vect(M_1)  \, \dots \, \vect(M_p)]^T\in\R^{p\times n^2}$ and \\ $\widehat{\mathfrak{M}}=[\vect(\widehat{M}_1) \, \dots \, \vect(\widehat{M}_p)]^T\in\R^{p\times r^2}$
    with $X\in\R^{n\times r}$, it holds
    \begin{align*}
        \tr\left(\mathfrak{M}(X\otimes X)\widehat{\mathfrak{M}}^T\right)         = \sum_{j=1}^p\tr\left(X^TM_jX\widehat{M}_j\right).
    \end{align*}
\end{corollary}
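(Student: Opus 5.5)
The trace is a sum of diagonal entries, so I would compute it one entry at a time. The matrix $\mathfrak{M}(X\otimes X)\widehat{\mathfrak{M}}^T$ is $p\times p$, and its $(j,j)$ entry is the bilinear form
\[
\vect(M_j)^T (X\otimes X)\vect(\widehat{M}_j),
\]
because the $j$-th row of $\mathfrak{M}$ is $\vect(M_j)^T$ and the $j$-th column of $\widehat{\mathfrak{M}}^T$ is $\vect(\widehat{M}_j)$. Summing over $j$ gives
\[
\tr\bigl(\mathfrak{M}(X\otimes X)\widehat{\mathfrak{M}}^T\bigr)=\sum_{j=1}^p \vect(M_j)^T (X\otimes X)\vect(\widehat{M}_j).
\]

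Next I would rewrite each summand using the Kronecker identities \eqref{eq:id_vect_kron} and \eqref{eq:id_tr}. By \eqref{eq:id_vect_kron} with $Z^T = X$, i.e.\ $Z=X^T$, we have $(X\otimes X)\vect(\widehat{M}_j)=\vect(X\widehat{M}_jX^T)$. Then \eqref{eq:id_tr} gives
\[
\vect(M_j)^T\vect(X\widehat{M}_jX^T)=\tr\bigl(M_j^T X\widehat{M}_jX^T\bigr).
\]
By cyclic invariance of the trace this equals $\tr(X^T M_j^T X\widehat{M}_j)$.

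The only point needing care is removing the transpose on $M_j$. The paper assumes without loss of generality that each $M_j$ is symmetric, so $M_j^T=M_j$ and the summand is $\tr(X^TM_jX\widehat{M}_j)$, which is the claimed identity. The argument also works without symmetry of $M_j$. In that case I would instead transpose the scalar, $\tr(M_j^TX\widehat{M}_jX^T)=\tr(X\widehat{M}_j^TX^TM_j)$, and use that $\widehat{M}_j$ is symmetric too (it is $V^TM_jV$ in \eqref{eq_PetrovGalerkin}, or symmetric by the same convention). So the only real issue is this bookkeeping of transposes; everything else is a direct application of \eqref{eq:id_tr} and \eqref{eq:id_vect_kron}.

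Alternatively, the statement can be read as a consequence of \Cref{lem:lem1}, which is presumably why the paper calls it a corollary. I would still present the direct vectorization argument above, since it is shorter and self-contained.
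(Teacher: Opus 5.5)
Your argument is correct, but it follows a different and more explicit route than the one the paper indicates. The paper gives no separate proof. It only says the identity follows immediately from \Cref{lem:lem1}; the natural reading is that you apply that lemma with $R=X$ (and rectangular blocks) to write $\sum_{j}M_jX\widehat{M}_j=\mathcal{M}(I_p\otimes X)\bigl[\widehat{M}_1^T\,\cdots\,\widehat{M}_p^T\bigr]^T$. The right-hand side then becomes the block form in which the quadratic-output terms appear in \eqref{eq:Qfull}, \eqref{eq:Q_mix} and in the proof of \Cref{theo:normS}.

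That route keeps the notation consistent with the block matrices $\mathcal{M}$ and $\widehat{\mathcal{M}}$. However, passing from it to the vec-stacked $\mathfrak{M}$ on the left still requires exactly the computation you write out with \eqref{eq:id_tr} and \eqref{eq:id_vect_kron}. Your diagonal-entry argument is therefore the self-contained version.

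Your route also makes visible where symmetry enters, which the corollary does not list as a hypothesis. Without symmetry of $M_j$ or $\widehat{M}_j$ the identity is false. Take $p=1$, $n=r=2$, $X=I_2$, $M_1=\widehat{M}_1=e_1e_2^T$. The left side is $\tr(M_1^T\widehat{M}_1)=1$ and the right side is $\tr(M_1\widehat{M}_1)=0$. So the statement rests on the paper's standing assumption that the $M_j$ are symmetric, as you note.

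One small bookkeeping point concerns your fallback case with nonsymmetric $M_j$. There, justifying symmetry of $\widehat{M}_j$ by $\widehat{M}_j=V^TM_jV$ is circular, since that matrix is symmetric only when $M_j$ is. Only your alternative justification works in that case: symmetrize $\widehat{M}_j$, which leaves $\widehat{x}^T\widehat{M}_j\widehat{x}$ unchanged.
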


In addition to the aggregated bilinear and quadratic-output matrix blocks $\mathcal{N}$ and $\mathcal{M}$ \eqref{eq:NM_def}, we will define the matrix
    \begin{align}
    \mathcal{N}_T&:=\begin{bmatrix}
        N_1^T \, \dots \, N_m^T
    \end{bmatrix}\in \R^{n\times nm}.\label{eq:mathcalNT}
\end{align}

 For computing gradients of error expressions with traces (as we will encounter later in the manuscript), we recall a lemma from \cite{mlinaric_structure-preserving} which enables us to compute these derivatives.
\begin{proposition}{\cite[Lemma 2.43]{mlinaric_structure-preserving}}\label{lem:gradtr}
    Let $A\in\R^{n\times m}, B\in \R^{n\times n}, C \in \R^{m\times m}$. Then $\nabla_{X}\tr(AX^T)=A$ and $\nabla_{X}\tr(BXCX^T)=BXC+B^TXC^T$ for $X\in\R^{n\times m}$.    
\end{proposition}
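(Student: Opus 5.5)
We identify the gradient through the Frobenius inner product $\skp{Y}{Z}=\tr(Y^TZ)$ on $\R^{n\times m}$. For a differentiable scalar function $f$, the gradient $\nabla_X f(X)\in\R^{n\times m}$ is the unique matrix satisfying
\begin{align*}
f(X+\Delta)=f(X)+\tr\bigl(\nabla_X f(X)^T\Delta\bigr)+o(\norm{\Delta})
\end{align*}
for all perturbations $\Delta\in\R^{n\times m}$. Equivalently, the entries are $\bigl(\nabla_X f\bigr)_{ij}=\partial f/\partial x_{ij}$. The strategy is therefore to expand each trace in $\Delta$, isolate the linear term, and rewrite it in the form $\tr(G^T\Delta)$ using the cyclic and transpose invariance of the trace. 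The matrix $G$ is then the gradient.

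For the first function, $f(X)=\tr(AX^T)$ is linear, so $f(X+\Delta)-f(X)=\tr(A\Delta^T)$ exactly. Taking the transpose gives $\tr(A\Delta^T)=\tr(\Delta A^T)$, and cycling gives $\tr(A^T\Delta)$. Hence the gradient is $G=A$. As a cross-check, by \eqref{eq:id_tr} we have $\tr(AX^T)=\vect(A)^T\vect(X)$, whose gradient with respect to $\vect(X)$ is $\vect(A)$.

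For the second function, $g(X)=\tr(BXCX^T)$, we expand
\begin{align*}
g(X+\Delta)=g(X)+\tr(B\Delta CX^T)+\tr(BXC\Delta^T)+\tr(B\Delta C\Delta^T).
\end{align*}
The last term is bounded by $\norm{B}\norm{C}\norm{\Delta}_F^2$, so it is $o(\norm{\Delta})$. For the first linear term, cycling gives $\tr(B\Delta CX^T)=\tr(CX^TB\Delta)=\tr\bigl((B^TXC^T)^T\Delta\bigr)$. For the second linear term, transposing and cycling give $\tr(BXC\Delta^T)=\tr(\Delta C^TX^TB^T)=\tr\bigl((BXC)^T\Delta\bigr)$. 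Adding the two yields $\nabla_X g=BXC+B^TXC^T$, as claimed.

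The only delicate point is keeping the transposes straight when converting each linear term to the canonical form $\tr(G^T\Delta)$. To guard against errors there, we would verify the result entrywise. Writing $g=\sum_{i,j,k,l}b_{ij}x_{jk}c_{kl}x_{il}$, the derivative with respect to $x_{pq}$ is
\begin{align*}
\frac{\partial g}{\partial x_{pq}}=\sum_{i,l}b_{ip}c_{ql}x_{il}+\sum_{j,k}b_{pj}x_{jk}c_{kq}=(B^TXC^T)_{pq}+(BXC)_{pq},
\end{align*}
which confirms the formula.
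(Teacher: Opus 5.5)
Your proof is correct. The expansion of $\tr\bigl(B(X+\Delta)C(X+\Delta)^T\bigr)$, the remainder bound $\abs{\tr(B\Delta C\Delta^T)}\le\norm{B}\norm{C}\norm{\Delta}_F^2$, and the rewriting of both linear terms as $\tr(G^T\Delta)$ are all right, and the entrywise computation confirms the transposes. The paper gives no proof of its own; it only cites \cite[Lemma 2.43]{mlinaric_structure-preserving}, so there is no different route to compare, and your first-order Fr\'echet-expansion argument is a valid self-contained verification.
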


\section{Gramians and generalized Sylvester and Lyapunov equations} \label{sec:3}
The reachability and observability Gramians are the main tools behind balanced truncation based model reduction~\cite{BenB17,GugA04}.
Suitable choices for the reachability Gramian $P$ and the observability Gramian $Q$ associated with the BQO system~\cref{eq:BQO} have been considered in \cite{FasGP25}. The reachability Gramian coincides with the one for bilinear systems; see  \cite[Equation (8) for $H=0$]{benner_balanced_2024} (see also
\cite[Definition 3.3.1]{padhi_2024}). 
It is defined via the reachability mapping 
\[
\bar{P} = \begin{bmatrix} \bar{P}_1 & \bar{P}_2 & \dots \end{bmatrix}
\]
with
\begin{align}    \label{eq:Pbar}
\begin{split}
    \bar{P}_1(t_1)&=e^{At_1}B \in \R^{n\times m}, \\
    \bar{P}_i(t_1,\dots , t_i) &= e^{At_i}\mathcal{N}(I_m\otimes \bar{P}_{i-1}(t_1,\dots,t_{i-1})) \in\R^{n\times m^i}, \quad \text{for } i\geq 2.
    \end{split}
\end{align}
With this, the reachability Gramian is defined as $P=\sum_{i=1}^\infty P_i \in \R^{n\times n}$ where
\begin{align*}
    P_i=\int_0^\infty \dots \int_0^\infty \bar{P}_i(t_1,\dots,t_i)\bar{P}^T_i(t_1,\dots,t_i)dt_1\dots dt_i,
\end{align*}
assuming all integrals exist and the infinite series converges.  
A similar derivation leads to the definition of  the observability Gramian $Q$; see \cite[Section 3.2.1]{FasGP25} where $Q$ is denoted by $Q^S$. We omit all details here, as they are not needed in the following. By construction, $P$ and $Q$ are symmetric positive semidefinite.

Recall that $A$ is assumed to be stable. In other words, there exist $\beta>0$ and $0<\alpha \leq -\max_i(\text{Re}(\lambda_i(A)))$ such that $\norm{e^{At}}\leq \beta e^{-\alpha t}$ for $t\geq 0$ \cite{adrianova_1995,willems70}. As discussed in \cite{FasGP25}, the parameters $\alpha$ and $\beta$ determine whether $P$ and $Q$ exist.

\begin{theorem}{\cite[Theorem 4.1, Theorem 4.4]{FasGP25}} 
\label{thm:PQ}
Consider a BQO system \cref{eq:BQO} with a stable matrix $A$.
    If $ \Gamma := \max\left\{\norm{\mathcal{N}}^2, \norm{\mathcal{N}_T}^2\right\} < \frac{2\alpha}{\beta^2}$, the reachability Gramian $P$ exists and the matrices $P$ and $Q$ are unique symmetric positive semidefinite solutions to 
\begin{subequations}\label{eq:PQ_full}
\begin{align}
    AP+PA^T+\sum_{k=1}^m N_kPN_k^T+BB^T=0,\label{eq:Pfull}\\
    A^TQ+QA+\sum_{k=1}^m N_k^TQN_k+\sum_{j=1}^pM_jPM_j +C^TC=0.\label{eq:Qfull}
\end{align}
\end{subequations}
\end{theorem}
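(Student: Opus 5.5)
The plan has two parts. First, the reachability Gramian $P=\sum_i P_i$ converges and solves \eqref{eq:Pfull}. Second, \eqref{eq:Qfull}, with $P$ now fixed in the term $\sum_j M_jPM_j$, has a unique symmetric positive semidefinite solution $Q$. Both parts reduce to one fact: the generalized Lyapunov operator is invertible under the condition $\Gamma<2\alpha/\beta^2$.

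For convergence, I would bound $\norm{P_i}$ using the recursion \eqref{eq:Pbar}. By \Cref{lem:lem1},
\[
P_i=\int_0^\infty e^{At}\,\mathcal{N}\bigl(I_m\otimes P_{i-1}\bigr)\mathcal{N}^T e^{A^Tt}\,dt=\int_0^\infty e^{At}\Bigl(\sum_{k=1}^m N_kP_{i-1}N_k^T\Bigr)e^{A^Tt}\,dt .
\]
Using $\norm{e^{At}}\le\beta e^{-\alpha t}$ and $\norm{I_m\otimes P_{i-1}}=\norm{P_{i-1}}$, this gives
\[
\norm{P_i}\le \frac{\beta^2}{2\alpha}\,\norm{\mathcal{N}}^2\,\norm{P_{i-1}}, \qquad \norm{P_1}\le \frac{\beta^2}{2\alpha}\norm{B}^2 .
\]
So $\norm{P_i}\le(\beta^2\Gamma/2\alpha)^{i-1}\norm{P_1}$, and the series converges geometrically. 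Each $P_i$ is an integral of products of the form $XX^T$, so $P\ge0$.

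Next I would derive the equations. Integrating $\frac{d}{dt}\bigl(e^{At}Ye^{A^Tt}\bigr)$ over $[0,\infty)$ shows $AP_1+P_1A^T+BB^T=0$ and $AP_i+P_iA^T+\sum_kN_kP_{i-1}N_k^T=0$. Summing over $i$ is justified by absolute convergence, and it yields \eqref{eq:Pfull}. For uniqueness, I would write the equation as $(\mathcal{L}+\Pi)(P)=-BB^T$, with $\mathcal{L}(X)=AX+XA^T$ and $\Pi(X)=\sum_kN_kXN_k^T$. Then $\mathcal{L}^{-1}(Y)=-\int_0^\infty e^{At}Ye^{A^Tt}\,dt$ satisfies $\norm{\mathcal{L}^{-1}}\le\beta^2/2\alpha$ in the operator norm induced by the spectral norm, and $\norm{\Pi(X)}\le\norm{\mathcal{N}}^2\norm{X}$. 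Hence $\norm{\mathcal{L}^{-1}\Pi}<1$, so $I+\mathcal{L}^{-1}\Pi$ is invertible by a Neumann series. The solution is therefore unique and equals the series, which shows it is symmetric positive semidefinite.

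For $Q$ I would repeat the argument with $A^T$ and $\Pi^*(X)=\sum_kN_k^TXN_k=\mathcal{N}_T(I_m\otimes X)\mathcal{N}_T^T$. This is where $\norm{\mathcal{N}_T}^2\le\Gamma$ is used, together with $\norm{e^{A^Tt}}=\norm{e^{At}}$. The right-hand side $\sum_jM_jPM_j+C^TC$ is symmetric positive semidefinite because $P\ge0$ and each $M_j$ is symmetric. The Neumann series then gives $Q=\sum_i Q_i$ with every term positive semidefinite, and uniqueness follows in the same way. This $Q$ coincides with the $Q^S$ built from the observability mapping in \cite{FasGP25}, since that mapping produces exactly this recursion.

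The main technical point is the operator-norm bound on $\mathcal{L}^{-1}\Pi$, in particular handling $\Pi$ through $\mathcal{N}(I\otimes X)\mathcal{N}^T$ so that $\norm{\mathcal{N}}^2$ appears rather than a cruder sum $\sum_k\norm{N_k}^2$. The remaining steps are routine.
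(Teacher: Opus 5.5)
Your proposal is correct and takes essentially the same route the paper relies on from \cite{FasGP25}, which it replays in the proof of \Cref{thm:SylvEq}: geometric bounds on the series $\sum_i P_i$ give existence, summing the term-wise Lyapunov equations gives \eqref{eq:Pfull}, the contraction $\norm{\mathcal{L}^{-1}\Pi}<1$ gives uniqueness, and $\mathcal{N}_T$ replaces $\mathcal{N}$ for \eqref{eq:Qfull}. Your closing identification of $Q$ with the Gramian $Q^S$ is not needed for the statement, and it should rest on uniqueness rather than on the two recursions being identical; it also holds only when $Q^S$ exists, which \Cref{rem:Qgramian} ties to an extra bound on $\norm{\mathcal{M}}^2$.
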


\begin{remark} \label{rem:Qgramian} 
Given the stability of $A$, the condition $\norm{\mathcal{N}}^2 < \frac{2\alpha}{\beta^2}$ guarantees the existence of a unique solution to~\cref{eq:Pfull}; similarly, $\norm{\mathcal{N}_T}^2 < \frac{2\alpha}{\beta^2}$ ensures a unique solution to~\cref{eq:Qfull}. 
The stronger requirement $\Gamma_Q:=\max\left\{\norm{\mathcal{N}}^2,\norm{\mathcal{N}_T}^2,\norm{\mathcal{M}}^2\right\} < \frac{2\alpha}{\beta^2}$ is only needed for the definition of $Q$; see \cite[Remark~4.6]{FasGP25} for details.
Since \eqref{eq:Qfull} admits a solution under the weaker condition $\norm{\mathcal{N}_T}^2 < \frac{2\alpha}{\beta^2}$, such a solution does not need to be a Gramian. Accordingly, since in the sequel we focus on solutions of the Lyapunov equations~\eqref{eq:PQ_full}, we drop the assumption $\norm{\mathcal{M}}^2 < \frac{2\alpha}{\beta^2}$. Consequently, the solution of \eqref{eq:Qfull} is not necessarily a Gramian, and we therefore refrain from referring to $Q$ as a Gramian.
\end{remark}

If $\widehat{A}$ in~\eqref{eq:BQOr} is stable, Gramians can also be defined for the reduced system. In analogy to the preceding discussion, there exist constants $\widehat{\beta} > 0$ and $0 < \widehat{\alpha} \leq -\max_i \bigl(\mathrm{Re}(\lambda_i(\widehat{A}))\bigr)$ such that
$\|e^{\widehat{A}t}\| \leq \widehat{\beta} e^{-\widehat{\alpha} t},  t \geq 0.$

\begin{corollary}{\cite[Theorem 4.1, Theorem 4.4]{FasGP25}}
\label{thm:PQhat}
    Consider a BQO system \cref{eq:BQOr} with a stable matrix $\widehat{A}$. If $\widehat \Gamma := \max\left\{\norm{\widehat{\mathcal{N}}}^2, \norm{\widehat{\mathcal{N}}_T}^2\right\} < \frac{2\widehat{\alpha}}{\widehat{\beta}^2}$, the reachability Gramian $\widehat{P}$ exists and the matrices $\widehat{P}$ and $\widehat{Q}$ are unique symmetric positive semidefinite solutions to 
    \begin{subequations}\label{eq:PQ_red}
\begin{align}
    \widehat{A}\widehat{P} + \widehat{P}\widehat{A}^T + \sum_{k=1}^m \widehat{N}_k \widehat{P} \widehat{N}_k^T + \widehat{B}\widehat{B}^T &= 0, \label{eq:P_red}\\
    \widehat{A}^T\widehat{Q} + \widehat{Q}\widehat{A} + \sum_{k=1}^m \widehat{N}_k^T \widehat{Q} \widehat{N}_k + \sum_{j=1}^p \widehat{M}_j \widehat{P} \widehat{M}_j + \widehat{C}^T \widehat{C} &= 0. \label{eq:Q_red}
\end{align}
\end{subequations}
\end{corollary}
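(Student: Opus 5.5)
This corollary is \Cref{thm:PQ} applied to a different system, so the plan is to check that the reduced model \eqref{eq:BQOr} satisfies every hypothesis of that theorem and then read off its conclusion. The reduced quintuple $\widehat{\mathcal{S}}=(\widehat{A},\widehat{B},\widehat{C},\widehat{\mathcal{N}},\widehat{\mathcal{M}})$ has the same form as \eqref{eq:BQO}, with $n$ replaced by $r$. One point needs attention: \eqref{eq:BQO} assumes each output matrix is symmetric. Under the Petrov--Galerkin construction \eqref{eq_PetrovGalerkin}, $\widehat{M}_j=V^TM_jV$ is symmetric whenever $M_j$ is. For an arbitrary reduced system we may replace $\widehat{M}_j$ by $\tfrac12(\widehat{M}_j+\widehat{M}_j^T)$ without changing the output $\widehat{x}^T\widehat{M}_j\widehat{x}$, exactly as the paper's ``without loss of generality'' assumption does for the full model. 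Hence $\widehat{\mathcal{S}}$ is a BQO system in the sense of \eqref{eq:BQO}.

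Next I would match the constants. By hypothesis $\widehat{A}$ is stable, which supplies $\widehat{\beta}>0$ and $0<\widehat{\alpha}\le -\max_i\operatorname{Re}\lambda_i(\widehat{A})$ with $\norm{e^{\widehat{A}t}}\le\widehat{\beta}e^{-\widehat{\alpha}t}$. These play the roles of $\alpha,\beta$ in \Cref{thm:PQ}. The aggregated blocks $\widehat{\mathcal{N}}$ and $\widehat{\mathcal{N}}_T$, the latter defined analogously to \eqref{eq:mathcalNT}, replace $\mathcal{N}$ and $\mathcal{N}_T$. The assumption $\widehat\Gamma<2\widehat{\alpha}/\widehat{\beta}^2$ is then exactly the hypothesis $\Gamma<2\alpha/\beta^2$ of \Cref{thm:PQ} for $\widehat{\mathcal{S}}$.

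\Cref{thm:PQ} now gives the following for the reduced system. The reachability Gramian $\widehat{P}=\sum_i\widehat{P}_i$ exists, built from the reduced kernels $\bar{\widehat{P}}_i$ of \eqref{eq:Pbar}. The pair $(\widehat{P},\widehat{Q})$ is the unique symmetric positive semidefinite solution of the Lyapunov equations \eqref{eq:PQ_full} with all matrices hatted, which are precisely \eqref{eq:P_red} and \eqref{eq:Q_red}.

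The only step that is more than relabeling is confirming that nothing in the proof of \Cref{thm:PQ} uses the particular dimension $n$ or the Petrov--Galerkin origin of the matrices. That proof rests only on the exponential bound, the norm condition, \Cref{lem:lem1}, and $\norm{I\otimes X}=\norm{X}$, none of which depend on dimension. Uniqueness of $\widehat{Q}$ needs only $\norm{\widehat{\mathcal{N}}_T}^2<2\widehat{\alpha}/\widehat{\beta}^2$, as in \Cref{rem:Qgramian}. So the statement follows directly.
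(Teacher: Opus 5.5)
Your proposal is correct and follows the paper's approach: the paper gives no separate argument and simply invokes \Cref{thm:PQ} (that is, \cite[Theorem 4.1, Theorem 4.4]{FasGP25}) for the reduced system, with $n$ replaced by $r$ and $\alpha,\beta,\mathcal{N},\mathcal{N}_T$ replaced by their hatted counterparts, exactly as you do. Your remark on the symmetry of $\widehat{M}_j$ is a sensible addition, but note that symmetrizing $\widehat{M}_j$ changes \eqref{eq:Q_red} itself. The conclusion therefore holds for the symmetrized $\widehat{M}_j$, which matches the paper's standing convention and is automatic for $\widehat{M}_j=V^TM_jV$.
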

Note that $\widehat{P}$, as the solution to~\cref{eq:P_red},  is the reachability Gramian the reduced system $\widehat{\mathcal{S}}$, while $\widehat{Q}$, as the solution to~\cref{eq:Q_red}, can only be interpreted as the observability Gramian of  $\widehat{\mathcal{S}}$ if $\norm{\widehat{\mathcal{M}}}^2 < \frac{2\widehat{\alpha}}{\widehat{\beta}^2}$ holds; see~\Cref{rem:Qgramian}.

As in \cite[Section 5]{benner_birka_2012}, we will make use of  the solution of certain generalized Sylvester equations in our model reduction algorithm. Using the reachability mapping $\bar{P}$ in \eqref{eq:Pbar} and its reduced equivalent $\widehat{\bar{P}}$, define $X\in\R^{n\times r}$ as
\begin{align} \label{eq:sumxi}
    X:=\sum_{i=1}^\infty X_i ,
\quad \text{using} \quad 
    X_i:=\int_0^\infty \dots \int_0^\infty \bar{P}_i(t_1,\dots,t_i)\widehat{\bar{P}}^T_i(t_1,\dots,t_i)dt_1\dots dt_i
\end{align}
assuming all integrals exist and the infinite series converges. 
$X$ represents a connection between the full and the reduced system. Under standard assumptions, $X$ is the solution of a generalized Sylvester equation similar to \Cref{thm:PQ,thm:PQhat}.
\begin{theorem}\label{thm:SylvEq} 
    Consider the BQO systems \cref{eq:BQO} and \cref{eq:BQOr} with stable $A$ and $\widehat{A}$. Let $\alpha,\beta, \widehat{\alpha}, \widehat{\beta}$ be the associated stability parameters. If $\norm{\mathcal{N}}^2<2\alpha/\beta^2$ and $\norm{\widehat{\mathcal{N}}}^2<2\widehat{\alpha}/\widehat{\beta}^2$, then $X$ from \cref{eq:sumxi} exists and satisfies
     \begin{align}
        AX+X\widehat{A}^T+\sum_{k=1}^mN_kX\widehat{N}_k^T+B\widehat{B}^T=0.   \label{eq:thmXex}     
    \end{align}
    Furthermore, with $E,F\in\R^{n\times r}$, the generalized Sylvester equations
    \begin{subequations}
    \begin{align}
        AX+X\widehat{A}^T+\sum_{k=1}^mN_kX\widehat{N}_k^T+E=0, \label{eq:sylvE} \\  
        A^TY+Y\widehat{A}+\sum_{k=1}^mN_k^TY\widehat{N}_k+F=0 \label{eq:sylvF}
    \end{align}
    \end{subequations}
     have unique solutions $X\in\R^{n\times r}$ and $Y\in\R^{n\times r}$ if $\norm{\mathcal{N}}^2<2\alpha/\beta^2$ and $\norm{\widehat{\mathcal{N}}}^2<2\widehat{\alpha}/\widehat{\beta}^2$ for $X$, and if $\norm{\mathcal{N}_T}^2<2\alpha/\beta^2$ and $\norm{\widehat{\mathcal{N}}_T}^2<2\widehat{\alpha}/\widehat{\beta}^2$ for $\mathcal{N}_T$ as in \eqref{eq:mathcalNT} for $Y$, respectively.
\end{theorem}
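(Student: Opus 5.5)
We mirror the argument used for the Gramian $P$ in \Cref{thm:PQ}. There are three steps: bound the terms $X_i$, identify a recursion, and prove uniqueness by a Kronecker/fixed-point argument.

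\textbf{Step 1: existence of $X$.} Using $\norm{e^{At}}\le\beta e^{-\alpha t}$ and $\norm{e^{\widehat{A}t}}\le\widehat{\beta}e^{-\widehat{\alpha}t}$, together with $\norm{I_m\otimes Z}=\norm{Z}$, the recursion \eqref{eq:Pbar} gives
\[
\norm{\bar{P}_i(t_1,\dots,t_i)}\le \beta^i\norm{\mathcal{N}}^{i-1}\norm{B}\,e^{-\alpha(t_1+\dots+t_i)},
\]
and the analogous bound holds for $\widehat{\bar{P}}_i$. Hence
\[
\norm{X_i}\le \norm{B}\norm{\widehat{B}}\,\beta\widehat{\beta}\,(\alpha+\widehat{\alpha})^{-1}\,q^{\,i-1},
\qquad q=\frac{\beta\widehat{\beta}\norm{\mathcal{N}}\norm{\widehat{\mathcal{N}}}}{\alpha+\widehat{\alpha}}.
\]
By AM--GM, $q^2\le\frac{\beta^2\norm{\mathcal{N}}^2}{2\alpha}\cdot\frac{\widehat{\beta}^2\norm{\widehat{\mathcal{N}}}^2}{2\widehat{\alpha}}<1$, using $(\alpha+\widehat{\alpha})^2\ge4\alpha\widehat{\alpha}$ and the hypotheses. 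So the integrals exist and the series \eqref{eq:sumxi} converges absolutely.

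\textbf{Step 2: the Sylvester equation.} First, $X_1=\int_0^\infty e^{At}B\widehat{B}^Te^{\widehat{A}^Tt}\,dt$ solves $AX_1+X_1\widehat{A}^T+B\widehat{B}^T=0$. This follows by differentiating the integrand and using decay at infinity. For $i\ge2$, separate out the outermost time variable $t_i$ and apply \Cref{lem:lem1}:
\[
X_i=\int_0^\infty e^{At_i}\Bigl(\sum_{k=1}^m N_kX_{i-1}\widehat{N}_k^T\Bigr)e^{\widehat{A}^Tt_i}\,dt_i .
\]
Here the inner integrals of $\bar{P}_{i-1}\widehat{\bar{P}}_{i-1}^T$ collapse to $X_{i-1}$ blockwise, which is the same block identity used for $P_i$. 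Therefore $AX_i+X_i\widehat{A}^T+\sum_kN_kX_{i-1}\widehat{N}_k^T=0$. Summing over $i$, which is justified by absolute convergence, yields \eqref{eq:thmXex}.

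\textbf{Step 3: uniqueness for \eqref{eq:sylvE} and \eqref{eq:sylvF}.} Define the Sylvester operator $\mathcal{L}(Z)=AZ+Z\widehat{A}^T$ and the map $\Pi(Z)=\sum_kN_kZ\widehat{N}_k^T$. Since $A$ and $\widehat{A}$ are stable, $\mathcal{L}$ is invertible with $\mathcal{L}^{-1}(G)=-\int_0^\infty e^{At}Ge^{\widehat{A}^Tt}\,dt$. Equation \eqref{eq:sylvE} is therefore equivalent to the fixed-point problem $Z=-\mathcal{L}^{-1}(\Pi(Z)+E)$.

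It suffices to show that the composition $\mathcal{L}^{-1}\Pi$ is a contraction in a suitable norm. To get the factor $q$ rather than a crude bound, I would write $\Pi(Z)=\mathcal{N}(I_m\otimes Z)\widehat{\mathcal{N}}^T$ via \Cref{lem:lem1}. This gives $\norm{\Pi(Z)}\le\norm{\mathcal{N}}\norm{\widehat{\mathcal{N}}}\norm{Z}$ in the spectral norm, so
\[
\norm{\mathcal{L}^{-1}\Pi(Z)}\le\frac{\beta\widehat{\beta}}{\alpha+\widehat{\alpha}}\norm{\mathcal{N}}\norm{\widehat{\mathcal{N}}}\norm{Z}=q\norm{Z},
\]
with $q<1$ as shown in Step 1. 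Banach's fixed-point theorem, or equivalently a Neumann series, then gives a unique solution $X$. For $Y$ the same argument applies to the transposed operator with $A^T,\widehat{A}^T$ and $\mathcal{N}_T,\widehat{\mathcal{N}}_T$, using the same exponential bounds since $\norm{e^{A^Tt}}=\norm{e^{At}}$.

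The main obstacle is getting the sharp contraction constant $q$ in Steps 1 and 3 under only the hypotheses $\norm{\mathcal{N}}^2<2\alpha/\beta^2$ and $\norm{\widehat{\mathcal{N}}}^2<2\widehat{\alpha}/\widehat{\beta}^2$. This hinges on the AM--GM step $(\alpha+\widehat{\alpha})^2\ge4\alpha\widehat{\alpha}$ and on handling the $m$ bilinear terms jointly via \Cref{lem:lem1}, instead of bounding $\sum_k\norm{N_k}\norm{\widehat{N}_k}$ term by term.
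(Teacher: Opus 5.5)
Your proposal is correct and follows essentially the same route as the paper. Both bound the terms $X_i$ to get existence of $X$, derive the recursive Sylvester equations $AX_i+X_i\widehat{A}^T+\sum_{k}N_kX_{i-1}\widehat{N}_k^T=0$ and sum them, and then recast \eqref{eq:sylvE} as the fixed-point problem $X=-\mathcal{L}^{-1}(\Pi(X)+E)$ with $\norm{\mathcal{L}^{-1}\Pi}<1$. The paper defers these estimates to \cite{FasGP25} and \cite{benner_birka_2012}, whereas you make them explicit via the contraction constant $q=\beta\widehat{\beta}\norm{\mathcal{N}}\norm{\widehat{\mathcal{N}}}/(\alpha+\widehat{\alpha})$ and the bound $(\alpha+\widehat{\alpha})^2\geq 4\alpha\widehat{\alpha}$, which is exactly what makes the separate full and reduced hypotheses sufficient.
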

\begin{proof} 
    Analogously to \cite[Theorem 4.1]{FasGP25}, one can show the existence of $X_i$ and $X$ under the given conditions. As in \cite[Section 3]{FasGP25} it can be shown that the $X_i$ satisfy Sylvester equations 
    \begin{align}
        AX_1+X_1\widehat{A}^T+B\widehat{B}^T= \, 0 \quad \text{and} \quad     
        AX_i+X_i\widehat{A}^T+\sum_{k=1}^mN_kX_{i-1}\widehat{N}_k^T=\, 0 \label{eq:eqXi}
    \end{align}
    for $i>1$. Summing over these equations leads to \cref{eq:thmXex}.       

Now we inspect the unique solution to \cref{eq:sylvE}. Similarly to \cite[Theorem 4.4]{FasGP25}, we can rewrite the generalized Sylvester equation \eqref{eq:sylvE} as a fixed-point equation $X=-\mathcal{L}_{A,\widehat{A}}^{-1}\left(\Pi_{N,\widehat{N}}(X)+E\right)$ using the operators $\mathcal{L}_{A,\widehat{A}}(X):=AX+X\widehat{A}^T$ and $\Pi_{N,\widehat{N}}(X):=\sum_{k=1}^mN_k X \widehat{N}_k^T$. This fixed-point equation has a unique solution if $\norm{\mathcal{L}_{A,\widehat{A}}^{-1}\Pi_{N,\widehat{N}}}<1$, which now follows due to the assumptions $\norm{\mathcal{N}}^2<2\alpha/\beta^2$ and $\norm{\widehat{\mathcal{N}}}^2<2\widehat{\alpha}/\widehat{\beta}^2$ (see also \cite[Lemma 5.1]{benner_birka_2012}).
The statement about the uniqueness of the solution to \cref{eq:sylvF} follows analogously.
\end{proof}

As mentioned in the proof above, \cref{eq:thmXex} can be solved via a sequence of fixed-point iterations. Let $\widetilde{X}_\ell:=\sum_{i=1}^\ell X_i$ denote the sum of the first $\ell$ terms $X_i$ in~\eqref{eq:sumxi}. Then $\widetilde X_1 = X_1$ holds with $A\tilde{X}_1+\tilde{X}_1\widehat{A}^T+B\widehat{B}^T= 0$ and summing over \cref{eq:eqXi} for $i=2,3,\dots,\ell$, yields
    \begin{align}
        A\widetilde{X}_\ell+\widetilde{X}_\ell\widehat{A}^T+\sum_{k=1}^mN_k\widetilde{X}_{\ell-1}\widehat{N}_k^T+B\widehat{B}^T=0. \label{eq:eqXitilde}
    \end{align}
 With the given conditions, these fixed-point iterations converge to $X$ (see \cite[Lemma 5.1]{benner_birka_2012} or for the case $r=n$, e.g., \cite{FasGP25} or \cite[Theorem 4]{benner_balanced_2024}). Hence, $X=\sum_{i=1}^\infty X_i=\lim_{\ell \to \infty}\widetilde{X}_\ell$ holds. This will be important for the computation of $X$ later.

\begin{remark}
    In \cite[Remark 4.2]{FasGP25}, it is noted that $\norm{\mathcal{N}}^2\leq \sum_{k=1}^m \norm{N_k}^2$ and $\norm{\mathcal{N}_T}^2\leq \sum_{k=1}^m \norm{N_k}^2$ holds. Similarly to \cite[Remark 4.2]{FasGP25}, one can substitute the requirements $\norm{\mathcal{N}}^2,\norm{\mathcal{N}_T}^2<2\alpha/\beta^2$ by $\sum_{k=1}^m \norm{N_k}^2<2\alpha/\beta^2$, which is a looser bound, but is used more frequently in the literature.
\end{remark}

With \Cref{thm:SylvEq}, we can now assume unique solutions $X\in\R^{n\times r}$, $Y\in\R^{n\times r}$ to
\begin{subequations}\label{eq:PQ_mix}
\begin{align}
    AX+X\widehat{A}^T+\sum_{k=1}^m N_kX\widehat{N}_k^T+B\widehat{B}^T=0, \label{eq:P_mix}\\
    A^TY+Y\widehat{A}+\sum_{k=1}^m N_k^TY\widehat{N}_k-\sum_{j=1}^pM_jX\widehat{M}_j -C^T\widehat{C}=0 \label{eq:Q_mix}
\end{align}
\end{subequations}
if $\norm{\mathcal{N}}^2,\norm{\mathcal{N}_T}^2<2\alpha/\beta^2$ and $\norm{\widehat{\mathcal{N}}}^2,\norm{\widehat{\mathcal{N}}_T}^2<2\widehat{\alpha}/\widehat{\beta}^2$.

This allows us to write state-space a description of the error system $\mathcal{S}_e=\mathcal{S}-\widehat{\mathcal{S}}$ between the BQO systems $\mathcal{S}$ and $\widehat{\mathcal{S}}$.
Define $M_{e,j}=\diag(M_j,-\widehat{M}_j)$ and 
let $x_e=\left[\begin{smallmatrix}
    x \\ \widehat{x}
\end{smallmatrix}\right]\in \mathbb{R}^{n+r}$ be the state of the error system $\mathcal{S}_e$. Then, $\mathcal{S}_e$ has the state-space form
\begin{align*}
    \dot{x}_e &= \underbrace{\diag(A,\widehat{A})}_{=A_e} x_e + \sum_{k=1}^m\underbrace{\diag(N_k,\widehat{N}_k)}_{=N_{e,k}} x_e u_k + \underbrace{\begin{bmatrix}B\\ \widehat{B}\end{bmatrix}}_{=B_e} u,\\
    y_e=y-\widehat{y}&=\underbrace{\begin{bmatrix}
        C & -\widehat{C}
    \end{bmatrix}}_{=C_e} x_e
    +\begin{bmatrix}
        x_e^T M_{e,1} x_e \\ 
    \vdots \\
    x_e^T M_{e,p} x_e 
    \end{bmatrix}.
    \end{align*}
Now, we can deduce a corollary from \cref{thm:PQ,thm:PQhat,thm:SylvEq} that states the generalized Lyapunov equations for the error system $\mathcal{S}_e$.
\begin{corollary} \label{cor:err_Lyap} Consider the BQO systems \cref{eq:BQO,eq:BQOr} with stable $A$ and $\widehat{A}$ with stability parameters $\alpha$, $\beta$ and $\widehat{\alpha}$, $\widehat{\beta}$, respectively. 
Furthermore, let $\mathcal{N}\in\R^{n\times nm}$ and $\widehat{\mathcal{N}}\in \R^{r\times rm}$ be as in \cref{eq:NM_def} and $\mathcal{N}_T\in\R^{n\times nm}$ and $\widehat{\mathcal{N}}_T\in \R^{r\times rm}$ be as in \cref{eq:mathcalNT}.
    If $\max\left\{\norm{\mathcal{N}}^2,\norm{\mathcal{N}_T}^2\right\}<2\alpha/\beta^2$ and $\max\left\{\norm{\widehat{\mathcal{N}}}^2,\norm{\widehat{\mathcal{N}}_T}^2\right\} <2\widehat{\alpha}/\widehat{\beta}^2$, then it holds
\begin{align*}
    A_eP_e+P_eA_e^T+\sum_{k=1}^mN_{e,k}P_eN_{e,k}^T+ B_eB_e^T &=0,\\
    A_e^TQ_e+Q_eA_e+\sum_{k=1}^mN_{e,k}^TQ_eN_{e,k} + \sum_{j=1}M_{e,j}P_eM_{e,j} + C_e^TC_e &=0
\end{align*}
with unique 
$P_e=\begin{bmatrix}
    P & X\\ X^T &\widehat{P}
\end{bmatrix}$ and $Q_e=\begin{bmatrix}
    Q & Y\\ Y^T &\widehat{Q}
\end{bmatrix}$ and unique solutions $P,Q,\widehat{P},\widehat{Q},X$, $Y$ to \cref{eq:Pfull,eq:P_mix,eq:P_red,eq:Qfull,eq:Q_mix,eq:Q_red}.
\end{corollary}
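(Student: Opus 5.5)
The plan is to substitute the block structures of $A_e,N_{e,k},B_e,C_e,M_{e,j}$ together with the block ansatz for $P_e$ and $Q_e$ into the two error Lyapunov equations. Multiplying out blockwise should reduce each equation to the already established equations \cref{eq:Pfull,eq:P_red,eq:P_mix} resp. \cref{eq:Qfull,eq:Q_red,eq:Q_mix}. Existence and uniqueness will then come from \Cref{thm:PQ,thm:PQhat,thm:SylvEq}, or alternatively from applying \Cref{thm:PQ} directly to the error system.

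\textbf{Step 1 (reachability equation).} With $P_e$ in block form, block-diagonal $A_e$ and $N_{e,k}$ give
\[
A_eP_e+P_eA_e^T+\sum_k N_{e,k}P_eN_{e,k}^T+B_eB_e^T=\begin{bmatrix} AP+PA^T+\sum_k N_kPN_k^T+BB^T & AX+X\widehat{A}^T+\sum_k N_kX\widehat{N}_k^T+B\widehat{B}^T\\ \ast & \widehat{A}\widehat{P}+\widehat{P}\widehat{A}^T+\sum_k\widehat{N}_k\widehat{P}\widehat{N}_k^T+\widehat{B}\widehat{B}^T\end{bmatrix}.
\]
The $(2,1)$ block is the transpose of the $(1,2)$ block. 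Hence the error equation holds if and only if \cref{eq:Pfull}, \cref{eq:P_mix} and \cref{eq:P_red} hold.

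\textbf{Step 2 (observability equation).} I compute $M_{e,j}P_eM_{e,j}$ blockwise. Because $M_{e,j}=\diag(M_j,-\widehat{M}_j)$, this matrix has the blocks $M_jPM_j$, $-M_jX\widehat{M}_j$ and $\widehat{M}_j\widehat{P}\widehat{M}_j$; the last sign comes from $(-1)^2$. The off-diagonal block of $C_e^TC_e$ is $-C^T\widehat{C}$. So the $(1,2)$ block reads
\[
A^TY+Y\widehat{A}+\sum_k N_k^TY\widehat{N}_k-\sum_j M_jX\widehat{M}_j-C^T\widehat{C},
\]
which is exactly \cref{eq:Q_mix}. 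The diagonal blocks give \cref{eq:Qfull} and \cref{eq:Q_red}, using that $P$ and $\widehat{P}$ are the $(1,1)$ and $(2,2)$ blocks of $P_e$.

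\textbf{Step 3 (uniqueness).} The blockwise equivalence shows that the error equations are equivalent to the six component equations. Under the stated norm conditions, \Cref{thm:PQ,thm:PQhat} give unique $P,Q,\widehat{P},\widehat{Q}$. \Cref{thm:SylvEq} gives unique $X$ from \cref{eq:sylvE} with $E=B\widehat{B}^T$. With $X$ fixed, it also gives unique $Y$ from \cref{eq:sylvF} with $F=-\sum_jM_jX\widehat{M}_j-C^T\widehat{C}$.

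It remains to rule out solutions $P_e,Q_e$ that are not of this form. Any symmetric solution is automatically partitioned this way, so the component equations force its blocks to equal the unique solutions above. If nonsymmetric solutions must also be excluded, I would argue as follows. The operator $Z\mapsto A_eZ+ZA_e^T+\sum_kN_{e,k}ZN_{e,k}^T$ acts blockwise, so its four block components are the Lyapunov and Sylvester operators of \Cref{thm:SylvEq}, the $(2,1)$ block being the transposed Sylvester case. Each of these is injective under the hypotheses, hence the whole operator is injective.

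The main obstacle is this last point: checking that the $(2,1)$ block of $P_e$ is forced to equal $X^T$. Injectivity of the transposed Sylvester operator follows by transposing \cref{eq:sylvE}, so it holds with the same norm conditions. Everything else is routine block multiplication.
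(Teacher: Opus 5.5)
Your proposal is correct and is essentially the argument the paper leaves unwritten when it says the corollary is deduced from \Cref{thm:PQ,thm:PQhat,thm:SylvEq}: blockwise multiplication reduces the two error equations to \cref{eq:Pfull,eq:P_mix,eq:P_red,eq:Qfull,eq:Q_mix,eq:Q_red}, with the standing symmetry of $M_j$ and $\widehat{M}_j$ making the $(2,1)$ block of the second equation the transpose of its $(1,2)$ block, and your blockwise-injectivity argument excluding nonsymmetric $P_e,Q_e$ fills in a detail the paper does not spell out. Drop only the aside about applying \Cref{thm:PQ} directly to the error system: in general its stability constants can only be taken as $\min\{\alpha,\widehat{\alpha}\}$ and $\max\{\beta,\widehat{\beta}\}$, while its aggregated bilinear block has norm $\max\{\norm{\mathcal{N}},\norm{\widehat{\mathcal{N}}}\}$, so the required bound does not follow from the stated hypotheses and the blockwise route is genuinely needed.
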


In the following, we will frequently use the following lemma which is a generalization of \cite[Lemma A.1]{yan_approximate_1999}.
\begin{lemma} \label{lem:sylv_tr} 
    Let $A\in\R^{n\times n}$, $B\in\R^{r\times r}$, $E\in\R^{n\times r}$, $F\in\R^{n\times r}$, and $G_k\in\R^{n\times n}$, $H_k\in\R^{r\times r}$, for all $k=1,\dots, m$. Assume that the generalized Sylvester equations    
        \begin{align}\label{eq:XY}
    AX+XB+\sum_{k=1}^mG_kXH_k^T+E=0
    ~\mbox{and}~
A^TY+YB^T+\sum_{k=1}^mG_k^TYH_k+F=0
    \end{align}
    have unique solutions $X\in\R^{n\times r}$ and $Y\in \R^{n\times r}$. Then it holds
    \begin{align*}
        \tr(E^TY)=\tr(F^TX).
    \end{align*}
\end{lemma}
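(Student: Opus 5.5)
The argument is the usual trace-duality trick for Sylvester equations: pair each equation with the solution of the other and use cyclic invariance of the trace. Uniqueness of the solutions is not needed for the identity itself; it only guarantees that $X$ and $Y$ are well defined.

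From the first equation in \eqref{eq:XY} we have $E=-(AX+XB+\sum_{k=1}^m G_kXH_k^T)$, and from the second $F=-(A^TY+YB^T+\sum_{k=1}^m G_k^TYH_k)$. Substituting,
\begin{align*}
\tr(E^TY) &= -\tr(X^TA^TY)-\tr(B^TX^TY)-\sum_{k=1}^m\tr(H_kX^TG_k^TY),\\
\tr(F^TX) &= -\tr(Y^TAX)-\tr(BY^TX)-\sum_{k=1}^m\tr(H_k^TY^TG_kX).
\end{align*}
We then show that the two right-hand sides agree term by term, using $\tr(Z)=\tr(Z^T)$ and cyclic invariance.
\begin{itemize}
\item $\tr(X^TA^TY)=\tr\bigl((X^TA^TY)^T\bigr)=\tr(Y^TAX)$.
\item $\tr(B^TX^TY)=\tr\bigl((B^TX^TY)^T\bigr)=\tr(Y^TXB)=\tr(BY^TX)$.
\item For each $k$, $\tr(H_kX^TG_k^TY)=\tr(Y^TG_kXH_k^T)=\tr(H_k^TY^TG_kX)$.
\end{itemize}
Summing these identities gives $\tr(E^TY)=\tr(F^TX)$.

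The only delicate point is matching the transposes correctly. The first equation carries $H_k^T$ on the right and the second carries $H_k$; the first has $XB$ while the second has $YB^T$. These are arranged exactly so that each term in one expression is the transpose of the corresponding term in the other. Beyond checking this, nothing more is required.
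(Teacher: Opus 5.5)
Your proof is correct, and it reaches the result by a slightly different route from the paper's. The paper vectorizes both equations with the Kronecker identity $\vect(XYZ)=(Z^T\otimes X)\vect(Y)$. It writes them as $\mathcal{G}\vect(X)=\vect(E)$ and $\mathcal{G}^T\vect(Y)=\vect(F)$ for one explicit matrix $\mathcal{G}$, and then chains
$\tr(E^TY)=\vect(E)^T\vect(Y)=\vect(E)^T\mathcal{G}^{-T}\vect(F)=\vect(X)^T\vect(F)$.

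You instead stay at the matrix level. You substitute $E$ and $F$ from the equations and match the three groups of terms pairwise using $\tr(Z)=\tr(Z^T)$ and cyclic invariance. Both arguments express the same fact: the two generalized Sylvester operators are adjoint with respect to the trace inner product. Your version makes this transparent term by term, and your transpose bookkeeping ($XB$ versus $YB^T$, $G_kXH_k^T$ versus $G_k^TYH_k$) is right.

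What your route buys:
\begin{itemize}
\item It needs no Kronecker machinery.
\item It shows explicitly that the identity holds for \emph{any} pair of solutions, so uniqueness only serves to make $X$ and $Y$ well defined.
\end{itemize}
The paper's chain goes through $\mathcal{G}^{-T}$ and so uses the invertibility that uniqueness guarantees. That dependence is avoidable there too, by writing $\vect(E)^T\vect(Y)=\vect(X)^T\mathcal{G}^T\vect(Y)$. The paper's formulation has the advantage of exhibiting the single matrix $\mathcal{G}$ whose invertibility is exactly the uniqueness hypothesis.
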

\begin{proof}
    Vectorize the two Sylvester equations in \eqref{eq:XY} to obtain
    \begin{align*}
        \underbrace{\left((I\otimes A)+(B^T\otimes I)+\sum_{k=1}^m(H_k\otimes G_k)\right)}_{=:-\mathcal{G}} \vect(X)&=-\vect(E), \\
        \underbrace{\left((I\otimes A^T)+(B\otimes I)+\sum_{k=1}^m(H_k^T\otimes G_k^T)\right)}_{=-\mathcal{G}^T} \vect(Y)&=-\vect(F),  
    \end{align*}
    using \cref{eq:id_vect_kron}.
    Hence, it holds that $\mathcal{G}\vect(X)=\vect(E)$ and $\mathcal{G}^T\vect(Y)=\vect(F)$.
    Then, using \cref{eq:id_tr}, we obtain the desired statement
    \begin{align*}
        \tr(E^TY)&=\vect(E)^T\vect(Y) = \vect(E)^T\mathcal{G}^{-T}\vect(F)= \vect(X)^T\vect(F)=\tr(X^TF).
    \end{align*}
\end{proof}

\section{$\sH_2$ analysis for BQO systems}
\label{sec:H2analysys}
In this section, we will establish the $\sH_2$ theory for BQO systems. We will start by developing the $\sH_2$ inner product and $\sH_2$ norm in \Cref{sec:4} and then use this analysis to bound the norm of the output via the $\sH_2$ norm in
\Cref{sec:5}. 
\subsection{$\sH_2$ inner product and $\sH_2$ norm for BQO systems}
\label{sec:4}
The $\sH_2$ system norms have been defined for linear systems, but also for bilinear systems or linear systems with quadratic output (see, e.g., \cite{zhang_2002,benner_birka_2012,flagg_multipoint_2015,redmann_bilinear_2021,reiter_2024}). In each of these settings,  the $\sH_2$ norm has proven essential for effective model order reduction. Motivated by this, we develop the $\sH_2$ analysis and the $\sH_2$ norms here for BQO systems, laying the theoretical 
foundation for $\mathcal{H}_2$ optimal model reduction in the 
sections that follow.

Note that the solution $x(t)$ of \eqref{eq:BQO_state} can be written as $x(t)=\sum_{i=1}^\infty x_i(t)$ with
 \[
  x_i(t)    
     = \int_0^t\int_0^{t-t_i}\dots \int_0^{t-t_i-\dots- t_{2}} \bar{P}_i(t_1,\dots,t_i)\left( u^{(t_i)}(t) \otimes \dots \otimes u^{(t_i,\dots,t_1)}(t)\right) dt_1\dots dt_i
\]
with $u_k^{(t_1,\dots,t_l)}(t):=u_k(t-t_1-\dots -t_l)$.
With this, the output equation \cref{eq:BQO_output} of the BQO system, can be rewritten as
 \begin{align*}
     y(t)&= Cx(t) +\mathfrak{M}(x(t)\otimes x(t))
     = \sum_{i=1}^\infty Cx_i(t)+\sum_{i,j=1}^\infty \mathfrak{M}(x_i(t)\otimes x_j(t)) \\
     &= \sum_{i=1}^\infty \int\dots \int C\bar{P}_i \left( u\otimes \dots \otimes u\right) dt_1\dots dt_i \\
     &\qquad + \sum_{i,j=1}^\infty \int \dots \int \mathfrak{M} \left(\bar{P}_i\otimes \bar{P}_j\right) (u\otimes \dots \otimes u) dt_1\dots dt_id\bar{t}_1 \dots d\bar{t}_j,
 \end{align*}
 where we omit the $t_i$ from superscripts and arguments for simplicity.
 This inspires us to define the Volterra kernels for the full system $\mathcal{S}$ \cref{eq:BQO} and also for the reduced system $\widehat{\mathcal{S}}$ \cref{eq:BQOr}:
 \begin{subequations}     \label{eq:volt_kern}
 \begin{align}
     h_{C,i}(t_1,\dots, t_i)&:=C\bar{P}_i(t_1,\dots, t_i)\in\R^{p\times m^i}, \\
     h_{M,i,j}(t_1,\dots,t_i,\bar{t}_1,\dots \bar{t}_j) &:= \mathfrak{M}(\bar{P}_i(t_1,\dots, t_i)\otimes \bar{P}_j(\bar{t}_1,\dots, \bar{t}_j)) \in\R^{p\times m^{i+j}}, \\
     \widehat{h}_{\widehat{C},i}(t_1,\dots, t_i)&:=\widehat{C}\widehat{\bar{P}}_i(t_1,\dots, t_i)\in\R^{p\times m^i}, \\
     \widehat{h}_{\widehat{M},i,j}(t_1,\dots,t_i,\bar{t}_1,\dots \bar{t}_j) &:= \widehat{\mathfrak{M}}(\widehat{\bar{P}}_i(t_1,\dots, t_i)\otimes \widehat{\bar{P}}_j(\bar{t}_1,\dots, \bar{t}_j)) \in\R^{p\times m^{i+j}}.
 \end{align}
 \end{subequations}
 These kernels should represent the characterization of the input-output-map.
In the bilinear case, where $\mathfrak{M}=0$ holds, the kernels $h_{C,i}$ and $\widehat{h}_{C,i}$ are sufficient for the further analysis (see, e.g., \cite{zhang_2002,benner_birka_2012}). Similarly, in the LQO case, the authors in \cite{reiter_2024} defined only two kernels, one for the linear output of the form $h_{C}=C\bar{P}$ and one for the quadratic output of the form $h_{M}=\mathfrak{M}(\bar{P}\otimes\bar{P})$. The kernels were used to define $\sH_2$ inner products and $\sH_2$ norms for bilinear or LQO systems, respectively.

Generalizing the concepts for bilinear and LQO systems, we can
define an $\sH_2$ inner product and also an $\sH_2$ norm for BQO systems.
\begin{definition} \label{def:H2skp}
    Let $\mathcal{S}=(A,B,C,\mathcal{N},\mathcal{M})$ and $\widehat{\mathcal{S}}=(\widehat{A},\widehat{B},\widehat{C},\widehat{\mathcal{N}},\widehat{\mathcal{M}})$ be BQO systems from \cref{eq:BQO} and \cref{eq:BQOr}. Moreover, let $h_{C,i}, h_{M,i,j}, \widehat{h}_{\widehat{C},i}, \widehat{h}_{\widehat{M},i,j}$ be their Volterra kernels defined in \cref{eq:volt_kern}. 
    The $\sH_2$ inner product of $\mathcal{S}$ and $\widehat{\mathcal{S}}$ is defined as
    \begin{align*}
        \skp{\mathcal{S}}{\widehat{\mathcal{S}}}_{\sH_2}&:=\sum_{i=1}^\infty \int_0^\infty \dots\int_0^\infty \tr\left(h_{C,i}\widehat{h}_{\widehat{C},i}^T\right) dt_1\dots dt_i \\
        &\qquad + \sum_{i,j=1}^\infty \int_0^\infty \dots \int_0^\infty \tr\left( h_{M,i,j}\widehat{h}_{\widehat{M},i,j}^T\right) dt_1\dots dt_i d\bar{t}_1 \dots d\bar{t}_j,
    \end{align*}
    assuming all integrals exist and the infinite series converge.
    If $\mathcal{S}=\widehat{\mathcal{S}}$, then the $\sH_2$ norm of $\mathcal{S}$ is defined as 
    \begin{align*}
        \norm{\mathcal{S}}_{\sH_2}^2 &:= \sum_{i=1}^\infty \int_0^\infty \dots\int_0^\infty \norm{h_{C,i}}^2_F dt_1\dots dt_i \\
        &\qquad + \sum_{i,j=1}^\infty \int_0^\infty \dots \int_0^\infty \norm{h_{M,i,j}}^2_F dt_1\dots dt_i d\bar{t}_1 \dots d\bar{t}_j,
    \end{align*}
    assuming all integrals exist and the infinite series converge.
\end{definition}

\Cref{def:H2skp} introduces the $\sH_2$ inner product and $\sH_2$ norm for BQO systems. However, its computation is not clear from the definition. 
Similar to the bilinear and the LQO cases  in \cite{gugercin_2008,reiter_2024}, next we establish an explicit direct formula for the $\sH_2$ inner product and the $\sH_2$ norm of BQO systems using solutions to the generalized Lyapunov and Sylvester equations.
\begin{theorem}\label{theo:normS}
    Let $\mathcal{S}$ and $\widehat{\mathcal{S}}$ be the full and reduced BQO systems in~\eqref{eq:BQO} and~\eqref{eq:BQOr} with stable $A$ and $\widehat{A}$. Assume that $\norm{\mathcal{N}}^2,\norm{\mathcal{N}_T}^2<2\alpha/\beta^2$ and $\norm{\widehat{\mathcal{N}}}^2,\norm{\widehat{\mathcal{N}}_T}^2<2\widehat{\alpha}/\widehat{\beta}^2$ holds. 
    Then, the $\sH_2$ inner product $\skp{\mathcal{S}}{\widehat{\mathcal{S}}}_{\sH_2}$ is given by
    \begin{align} \label{eq:H2innerbqo}
        \skp{\mathcal{S}}{\widehat{\mathcal{S}}}_{\sH_2}&= \tr(CX\widehat{C}^T)+\sum_{j=1}^p\tr(X^TM_jX\widehat{M}_j) = -\tr(B^TY\widehat{B}),
    \end{align}
    where  $X$ is as defined in \cref{eq:sumxi} and \eqref{eq:P_mix}, and $Y$ in \cref{eq:Q_mix}.
    If $\mathcal{S}=\widehat{\mathcal{S}}$, then $X=P$ from~\cref{eq:Pfull} and $Y=-Q$ from~\cref{eq:Qfull}, and the $\sH_2$ norm of $\mathcal{S}$ is given by
    \begin{align}  \label{eq:H2normbqo}
        \norm{\mathcal{S}}_{\sH_2}^2&= \tr(CPC^T)+ \sum_{j=1}^p\tr(PM_jPM_j) = \tr(B^TQB).
    \end{align}  
\end{theorem}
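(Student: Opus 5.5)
The plan is to evaluate the two sums in \Cref{def:H2skp} separately using the definitions of $X_i$ in \cref{eq:sumxi}, and then convert the result to the $B$-form with \Cref{lem:sylv_tr}.

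\emph{Linear part.} By cyclicity of the trace,
\[
\tr\bigl(h_{C,i}\widehat{h}_{\widehat{C},i}^T\bigr)=\tr\bigl(C\bar{P}_i\widehat{\bar{P}}_i^T\widehat{C}^T\bigr).
\]
Interchanging integration with $\tr$ and with the linear maps $C(\cdot)\widehat{C}^T$, each term integrates to $\tr(CX_i\widehat{C}^T)$. Summing over $i$ gives $\tr(CX\widehat{C}^T)$. Existence of the integrals and convergence of the series follow from \Cref{thm:SylvEq}, whose proof gives absolute bounds of the form $\norm{X_i}\le c\,\rho^{i}$ with $\rho<1$.

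\emph{Quadratic part.} The mixed-product rule gives
\[
h_{M,i,j}\widehat{h}_{\widehat{M},i,j}^T=\mathfrak{M}\bigl(\bar{P}_i\widehat{\bar{P}}_i^T\otimes \bar{P}_j\widehat{\bar{P}}_j^T\bigr)\widehat{\mathfrak{M}}^T.
\]
The integrand factorizes: the first Kronecker factor depends only on $(t_1,\dots,t_i)$ and the second only on $(\bar t_1,\dots,\bar t_j)$. Fubini therefore yields $\tr\bigl(\mathfrak{M}(X_i\otimes X_j)\widehat{\mathfrak{M}}^T\bigr)$. Summing over $i,j$ and using bilinearity of $\otimes$ gives $\tr\bigl(\mathfrak{M}(X\otimes X)\widehat{\mathfrak{M}}^T\bigr)$. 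This double series converges absolutely because it is dominated by $\bigl(\sum_i\norm{X_i}\bigr)^2$. By \Cref{lem:id_M}, the expression equals $\sum_j\tr(X^TM_jX\widehat{M}_j)$. Strictly, the corollary is stated with the same $X$ in both slots, which is exactly our situation.

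\emph{Adjoint form.} I apply \Cref{lem:sylv_tr} with $A$, $B\to\widehat{A}^T$, $G_k=N_k$, $H_k=\widehat{N}_k$. With these choices the first equation in \eqref{eq:XY} is \eqref{eq:P_mix} with $E=B\widehat{B}^T$. The second becomes
\[
A^TY+Y\widehat{A}+\sum_k N_k^TY\widehat{N}_k+F=0,
\]
which matches \eqref{eq:Q_mix} with $F=-\sum_j M_jX\widehat{M}_j-C^T\widehat{C}$. Uniqueness of both solutions is provided by \Cref{thm:SylvEq} under the stated norm assumptions. The lemma then gives
\[
\tr(\widehat{B}B^TY)=\tr(F^TX)=-\tr(\widehat{C}^TCX)-\sum_j\tr(\widehat{M}_jM_jX).
\]
Here $M_j$ and $\widehat{M}_j$ are symmetric, but $X$ is not symmetric. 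So $\tr(\widehat{M}_jM_jX)$ is not obviously equal to $\tr(X^TM_jX\widehat{M}_j)$, and this is a real issue. I expect it to be the main obstacle, because the sign and the placement of the $X$ factors must match.

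On closer inspection, the correct adjoint source term should likely be $-\sum_jM_jX\widehat{M}_j$ paired against $X$ with a factor of $X$ inside. Then
\[
\tr(F^TX)\ni-\tr\bigl(\widehat{M}_jX^TM_jX\bigr)=-\tr\bigl(X^TM_jX\widehat{M}_j\bigr),
\]
which is exactly the quadratic term. So I will read \eqref{eq:Q_mix} with $M_jX\widehat{M}_j$, as written, and verify that
\[
\tr\bigl((M_jX\widehat{M}_j)^TX\bigr)=\tr\bigl(\widehat{M}_jX^TM_jX\bigr)
\]
by symmetry of $M_j$ and $\widehat{M}_j$. Hence $\tr(F^TX)=-\bigl[\tr(CX\widehat{C}^T)+\sum_j\tr(X^TM_jX\widehat{M}_j)\bigr]$. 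Since $\tr(E^TY)=\tr(\widehat{B}B^TY)=\tr(B^TY\widehat{B})$, this gives $\skp{\mathcal{S}}{\widehat{\mathcal{S}}}_{\sH_2}=-\tr(B^TY\widehat{B})$.

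\emph{Norm case.} When $\widehat{\mathcal{S}}=\mathcal{S}$, \eqref{eq:P_mix} coincides with \eqref{eq:Pfull}, so uniqueness gives $X=P$. Likewise \eqref{eq:Q_mix} coincides with \eqref{eq:Qfull} after the substitution $Y=-Q$, so uniqueness gives $Y=-Q$. Substituting into \eqref{eq:H2innerbqo} yields \eqref{eq:H2normbqo}.
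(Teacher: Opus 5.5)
Your proposal is correct and follows the same route as the paper. You evaluate the kernel integrals to get $\tr(CX\widehat{C}^T)+\tr\bigl(\mathfrak{M}(X\otimes X)\widehat{\mathfrak{M}}^T\bigr)$, rewrite the quadratic term with \Cref{lem:id_M}, and apply \Cref{lem:sylv_tr} to \eqref{eq:P_mix}--\eqref{eq:Q_mix} with $E=B\widehat{B}^T$ and $F=-C^T\widehat{C}-\sum_{j}M_jX\widehat{M}_j$; you also justify the Fubini and series steps, which the paper leaves implicit. The ``obstacle'' you flagged was only a dropped $X^T$ when forming $F^T$: since $(M_jX\widehat{M}_j)^T=\widehat{M}_jX^TM_j$, your corrected computation is exactly the paper's step, and \eqref{eq:Q_mix} needs no reinterpretation.
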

\begin{proof}
Our proof follows the machinery of the proof \cite{benner_h2QB_2018} where a similar result for the $\sH_2$ norm was derived for a different class of systems, namely the quadratic-bilinear control systems. To simplify the notation, we omit writing the time variables explicitly in the various functions. Starting with the definition of the $\sH_2$ inner product in \Cref{def:H2skp}, we get
 \begin{align*}
     \skp{\mathcal{S}}{\widehat{\mathcal{S}}}_{\sH_2}&= \sum_{i=1}^\infty \int_0^\infty \dots\int_0^\infty \tr\left(h_{C,i}\widehat{h}_{\widehat{C},i}^T\right) dt_1\dots dt_i \\
        &~~~ + \sum_{i,j=1}^\infty \int_0^\infty \dots \int_0^\infty \tr\left( h_{M,i,j}\widehat{h}_{\widehat{M},i,j}^T\right) dt_1\dots dt_i d\bar{t}_1 \dots d\bar{t}_j. 
\end{align*}
Substituting the definition of the kernels from \cref{eq:volt_kern} into this formula leads to
\begin{align*}
        \skp{\mathcal{S}}{\widehat{\mathcal{S}}}_{\sH_2}
        &=\sum_{i=1}^\infty \int_0^\infty \dots\int_0^\infty \tr\left(C\bar{P}_i\widehat{\bar{P}}^T_i\widehat{C}^T\right) dt_1\dots dt_i \\
        &~~~ + \sum_{i,j=1}^\infty \int_0^\infty \dots \int_0^\infty \tr\left( \mathfrak{M}(\bar{P}_i\otimes \bar{P}_j)(\widehat{\bar{P}}_i \otimes \widehat{\bar{P}}_j)^T\widehat{\mathfrak{M}}^T\right) dt_1\dots dt_i d\bar{t}_1 \dots d\bar{t}_j\\ 
        &=\tr\left(CX\widehat{C}^T\right) +\tr\left( \mathfrak{M}(X\otimes X)\widehat{\mathfrak{M}}^T\right),
 \end{align*}
 where we use \cref{eq:sumxi} in the last step.
    Now, using \Cref{lem:id_M} we obtain that 
    \begin{align*}
    \skp{\mathcal{S}}{\widehat{\mathcal{S}}}_{\sH_2}&=\tr\left(CX\widehat{C}^T\right) +\sum_{j=1}^p \tr(X^TM_jX\widehat{M}_j) 
    =\tr\left(X^T\left(C^T\widehat{C}+\sum_{j=1}^pM_jX\widehat{M}_j\right)\right)\\
    &= - \tr\left( Y^TB\widehat{B}^T\right)
    = -\tr\left(B^TY\widehat{B}\right),
    \end{align*}
    where we apply \Cref{lem:sylv_tr} to \cref{eq:PQ_mix} in the last steps.    
    The second statement for $\norm{\mathcal{S}}_{\sH_2}$ follows directly with $\mathcal{S}=\widehat{\mathcal{S}}$.
\end{proof}

\begin{remark}
We recall that bilinear systems are received by setting $M_j=0$ in \cref{eq:BQO_output} and LQO systems are received by setting $N_k=0$ in \cref{eq:BQO_state} similarly. Thus, we see that our definition of the $\sH_2$ norm for BQO systems recovers the definitions for bilinear systems~\cite{benner_birka_2012}, and LQO systems~\cite{reiter_2024} as special cases. 
\end{remark}
The $\sH_2$ norm formula~\eqref{eq:H2normbqo} immediately leads to an explicit expression to measure the  $\sH_2$ distance between two BQO systems.
\begin{corollary}\label{cor:H2err}
  Under the assumptions of Theorem \ref{theo:normS},  it holds
    \begin{align*}
        \norm{\mathcal{S}-\widehat{\mathcal{S}}}_{\sH_2}^2 &= \tr(CPC^T)-2\tr(CX\widehat{C}^T)+\tr(\widehat{C}\widehat{P}\widehat{C}^T) \\
        &\qquad \qquad+ \sum_{j=1}^p \tr(PM_jPM_j)-2\tr(X^TM_jX\widehat{M}_j)+\tr(\widehat{P}\widehat{M}_j\widehat{P}\widehat{M}_j) \\
        &=\tr(B^TQB)  +2\tr(B^TY\widehat{B})+\tr(\widehat{B}^T\widehat{Q}\widehat{B}).
    \end{align*}
\end{corollary}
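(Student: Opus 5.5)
The plan is to expand the squared norm bilinearly, $\norm{\mathcal{S}-\widehat{\mathcal{S}}}_{\sH_2}^2=\norm{\mathcal{S}}_{\sH_2}^2-2\skp{\mathcal{S}}{\widehat{\mathcal{S}}}_{\sH_2}+\norm{\widehat{\mathcal{S}}}_{\sH_2}^2$, and then evaluate each term with \Cref{theo:normS}. Before expanding, I need to say what $\mathcal{S}-\widehat{\mathcal{S}}$ means. I take it to be the error system $\mathcal{S}_e$ introduced before \Cref{cor:err_Lyap}.

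Its reachability mapping is block structured. Because $A_e$ and $N_{e,k}$ are block diagonal, the recursion \eqref{eq:Pbar} gives $\bar{P}_{e,i}=\left[\begin{smallmatrix}\bar{P}_i\\ \widehat{\bar{P}}_i\end{smallmatrix}\right]$. Substituting this into the kernels \eqref{eq:volt_kern} of $\mathcal{S}_e$ yields
\[
h_{C_e,i}=h_{C,i}-\widehat{h}_{\widehat{C},i}.
\]
For the quadratic part, $M_{e,j}=\diag(M_j,-\widehat{M}_j)$ gives $x_e^TM_{e,j}x_e=x^TM_jx-\widehat{x}^T\widehat{M}_j\widehat{x}$, and at the kernel level
\[
h_{M_e,i,j}=h_{M,i,j}-\widehat{h}_{\widehat{M},i,j}.
\]
To see this, note that $\vect(M_{e,j})^T(\bar{P}_{e,i}\otimes\bar{P}_{e,l})$ only picks up the diagonal blocks, namely $\bar{P}_i\otimes\bar{P}_l$ and $\widehat{\bar{P}}_i\otimes\widehat{\bar{P}}_l$. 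I would verify this from \eqref{eq:id_vect_kron}, writing the kernel as $\bar{P}_{e,l}^T\,\cdot\,M_{e,j}\bar{P}_{e,i}$ in vectorized form.

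With these identities, the $\sH_2$ norm in \Cref{def:H2skp} is the sum of integrals of the squared Frobenius norms of kernel differences. Expanding $\norm{a-b}_F^2=\norm{a}_F^2-2\tr(ab^T)+\norm{b}_F^2$ inside the sums produces the bilinear expansion. I can split the series into three because each of the three series converges separately, which \Cref{theo:normS} guarantees under the stated assumptions.

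For the first equality, apply \eqref{eq:H2normbqo} to $\mathcal{S}$ and to $\widehat{\mathcal{S}}$, with $P$ and $\widehat{P}$ as given by \Cref{thm:PQ,thm:PQhat}, and use \eqref{eq:H2innerbqo} for the cross term. The mixed term $\tr(X^TM_jX\widehat{M}_j)$ comes from \Cref{lem:id_M}.

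For the second equality, insert $\norm{\mathcal{S}}_{\sH_2}^2=\tr(B^TQB)$, $\norm{\widehat{\mathcal{S}}}_{\sH_2}^2=\tr(\widehat{B}^T\widehat{Q}\widehat{B})$ and $\skp{\mathcal{S}}{\widehat{\mathcal{S}}}_{\sH_2}=-\tr(B^TY\widehat{B})$. Then $-2\skp{\mathcal{S}}{\widehat{\mathcal{S}}}_{\sH_2}=+2\tr(B^TY\widehat{B})$, which is exactly the stated form.

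As a consistency check, applying \Cref{theo:normS} with $\widehat{\mathcal{S}}=\mathcal{S}$ to $\mathcal{S}_e$ itself gives $\tr(B_e^TQ_eB_e)$, with $P_e$ and $Q_e$ taken from \Cref{cor:err_Lyap}. Expanding the blocks of this expression reproduces the same formula.

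The main obstacle is justifying the kernel identity for the quadratic part, in particular that the off-diagonal mixed Kronecker terms vanish. It is also the place where the question of whether the problem works with kernels or with norms has to be settled. The remaining steps are straightforward substitutions.
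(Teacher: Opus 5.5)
Your proposal is correct and follows the paper's proof. That proof consists solely of the expansion $\norm{\mathcal{S}-\widehat{\mathcal{S}}}_{\sH_2}^2=\norm{\mathcal{S}}_{\sH_2}^2-2\skp{\mathcal{S}}{\widehat{\mathcal{S}}}_{\sH_2}+\norm{\widehat{\mathcal{S}}}_{\sH_2}^2$, followed by substituting \Cref{theo:normS}. Your check that the Volterra kernels of $\mathcal{S}_e$ are exactly $h_{C,i}-\widehat{h}_{\widehat{C},i}$ and $h_{M,i,j}-\widehat{h}_{\widehat{M},i,j}$, with the mixed Kronecker blocks dropping out, is the justification of that expansion which the paper leaves implicit.
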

\begin{proof}
Follows directly due to 
       $ \norm{\mathcal{S}-\widehat{\mathcal{S}}}_{\sH_2}^2 = \norm{\mathcal{S}}_{\sH_2}^2-2\skp{\mathcal{S}}{\widehat{\mathcal{S}}}_{\sH_2}+\norm{\widehat{\mathcal{S}}}_{\sH_2}^2.$
\end{proof}

\subsection{Bounding the output via the $\sH_2$ norm} \label{sec:5}
A central reason the $\mathcal{H}_2$ norm plays such a critical role 
in model reduction of linear dynamical systems is that it directly 
bounds the norm of the output~\cite{ant_beattie_gug_2020}. This 
result was recently extended to bilinear systems~\cite{
redmann_bilinear_2021}. In this section, we establish the analogous 
theory for BQO systems.

Assume $u\in L^2$, i.e.,
    \begin{align*}
        \norm{u}_{L^2}^2 := \int_0^\infty \norm{u(s)}_2^2 ds = \int_0^\infty u^T(s)u(s) ds <\infty.
    \end{align*}
Next, we define the vector of the control components which correspond to a nonzero $N_k$, $k=1,\dots,m$:
\begin{align*}
    u^0=\begin{bmatrix}
        u_1^0 & \dots u_m^0
    \end{bmatrix}^T \text{ with } u_k^0=\begin{cases}
0, & \text{if } N_k=0, \\
u_k, & \text{else}.
\end{cases}
\end{align*}
Note that $\norm{u^0}_{L^2}\leq \norm{u}_{L^2}$. 

The output of the BQO system~\cref{eq:BQO} has two components as shown in~\eqref{eq:BQO_output}: namely the linear term $Cx(t)$ and the quadratic term 
$\mathfrak{M}(x(t) \otimes x(t))$. Since BQO systems share the same state equation with bilinear systems, we will first use a result from~\cite{
redmann_bilinear_2021} to bound the linear term $Cx(t)$.
\begin{theorem}{\cite[Theorem 4.1]{redmann_bilinear_2021}}\label{thm:redmann}
    Consider a bilinear system with linear output $y(t) = Cx(t)$, i.e. setting $M_j=0$ in \cref{eq:BQO}. Also suppose the initial condition $x_0=0$ and suppose that $P$ is the unique solution to \cref{eq:Pfull}.    
    Then it holds that
        \begin{align*}
            \sup_{t\geq 0} \norm{Cx(t)}_2 \leq (\tr(CPC^T))^{\frac{1}{2}} e^{ 0.5 \norm{u^0}_{L^2}^2} \norm{u}_{L^2}.
        \end{align*}
\end{theorem}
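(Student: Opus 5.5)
We follow the route of Redmann's argument: write $x(t)$ through the Volterra series, reduce to a weighted estimate per term, and resum. Since the state equation here is exactly bilinear, the bound only involves $P$.

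The starting point is the representation $x(t)=\sum_{i\ge1}x_i(t)$ from the beginning of \Cref{sec:4}. This gives
\[
Cx(t)=\sum_{i\ge1}\int\!\cdots\!\int C\bar P_i(t_1,\dots,t_i)\bigl(u^{(t_i)}(t)\otimes\cdots\otimes u^{(t_i,\dots,t_1)}(t)\bigr)\,dt_1\cdots dt_i
\]
over the simplex $t_1+\dots+t_i\le t$. Because $\bar P_i$ contains the factor $\mathcal N(I_m\otimes\cdot)$ for every $i\ge2$, the input components paired with a zero $N_k$ drop out of all but the last slot. We may therefore replace $u$ by $u^0$ in the first $i-1$ Kronecker factors and keep the full $u$ only in the factor attached to $B$.

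We apply Cauchy--Schwarz to each term over the simplex:
\[
\norm{Cx_i(t)}_2\le\Bigl(\int\!\cdots\!\int\norm{C\bar P_i}_F^2\Bigr)^{1/2}\Bigl(\int_{\text{simplex}}\norm{u(s_1)}^2\norm{u^0(s_2)}^2\cdots\norm{u^0(s_i)}^2\Bigr)^{1/2}.
\]
The first factor is at most $\tr(CP_iC^T)^{1/2}$. After the change of variables $s_\ell=t-t_i-\dots-t_\ell$, the second is an integral over the ordered region $0\le s_i\le\dots\le s_1\le t$. Bounding $\norm{u(s_1)}^2$ integrated freely by $\norm{u}_{L^2}^2$, the remaining $i-1$ ordered integrals of $\norm{u^0}^2$ give at most $\norm{u^0}_{L^2}^{2(i-1)}/(i-1)!$. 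This ordering step is the point where the factorial appears, and it is the delicate part of the argument. One has to check that the time-ordering induced by the nested upper limits really yields the simplex volume factor, and that the ``free'' slot can be taken to be the $u$ slot rather than a $u^0$ slot.

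Finally, we sum over $i$ and apply Cauchy--Schwarz to the series:
\[
\sum_i\tr(CP_iC^T)^{1/2}\frac{\norm{u^0}^{i-1}}{\sqrt{(i-1)!}}\,\norm{u}\le\Bigl(\sum_i\tr(CP_iC^T)\Bigr)^{1/2}\Bigl(\sum_{i}\frac{\norm{u^0}^{2(i-1)}}{(i-1)!}\Bigr)^{1/2}\norm{u}.
\]
Since $P=\sum_iP_i$ converges by \Cref{thm:PQ}, the first factor equals $\tr(CPC^T)^{1/2}$, and the second equals $e^{0.5\norm{u^0}_{L^2}^2}$. The bound is uniform in $t$, which gives the claimed supremum estimate.

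The convergence and interchange issues are covered by the existence of $P$ under the stated assumption, and by absolute convergence once the terms are dominated as above. The only genuine obstacle is the combinatorial ordering step that produces the factor $1/(i-1)!$.
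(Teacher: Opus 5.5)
Your proof is correct and yields exactly the constant in the statement, but it takes a genuinely different route from the one the paper relies on.

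\textbf{The paper's route.} The paper cites this result without proving it, but the underlying machinery appears in the proof of \Cref{thm:bqo_out_est}. There one writes $x(t)=\int_0^t\Phi_u(t,s)Bu(s)\,ds$ with the fundamental solution $\Phi_u$ and applies Cauchy--Schwarz once in $s$. One then uses the comparison inequality $\Phi_u(t,s)BB^T\Phi_u^T(t,s)\leq e^{\int_0^t\norm{u^0(\tau)}_2^2d\tau}\,\bar{Z}(t-s,BB^T)$ from \cite{redmann_bilinear_2021}, together with $P=\lim_{t\to\infty}\int_0^t\bar{Z}(s,BB^T)\,ds$. The exponential thus comes out of a single matrix inequality covering the whole nonlinear flow.

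\textbf{Your route.} You work term by term in the Volterra series. The exponential is assembled as $\sum_{i\geq1}\norm{u^0}_{L^2}^{2(i-1)}/(i-1)!$ from simplex volumes and a discrete Cauchy--Schwarz, and $\tr(CPC^T)$ appears as $\sum_i\tr(CP_iC^T)$.

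\textbf{What each buys.} Your argument is more elementary and self-contained, since no comparison lemma for $\Phi_u$ is needed, and it shows clearly why only $u^0$ enters the exponent. It also extends to the term $\mathfrak{M}(x\otimes x)$ via a double series over $(i,j)$. The cost is twofold. You need the Volterra series to converge to $x(t)$. More substantively, you need the solution of \eqref{eq:Pfull} to equal the series Gramian $\sum_iP_i$. Uniqueness of that solution alone does not give this, so you are really using the hypothesis of \Cref{thm:PQ}. Redmann's step $P_t\to P$ needs an analogous stability assumption, so neither route escapes it.

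\textbf{A small correction.} With $s_\ell=t-t_i-\dots-t_\ell$, the region is $0\leq s_1\leq s_2\leq\dots\leq s_i\leq t$, not the reverse. This is harmless. The ordering step you single out as delicate is routine: for fixed $s_1$, the remaining variables range over a subset of the ordered simplex in $[0,t]^{i-1}$. Since $\prod_{\ell\geq2}\norm{u^0(s_\ell)}_2^2$ is symmetric, its integral over that simplex is exactly $\bigl(\int_0^t\norm{u^0(s)}_2^2\,ds\bigr)^{i-1}/(i-1)!$.
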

Now with this result, we are ready to bound the full quadratic output $y(t) = Cx(t) + \mathfrak{M}(x(t) \otimes x(t))$ for BQO systems inspired by \Cref{thm:redmann}.  
\begin{theorem}\label{thm:bqo_out_est}
    Consider the BQO system \cref{eq:BQO} with initial state $x_0=0$ and the unique solution $P$ to \cref{eq:Pfull}.  Then, the output $y(t)$ of the BQO system satisfies
    \begin{align*}
            \sup_{t\geq 0} \norm{y(t)}_2 &\leq(\tr(CPC^T))^{\frac{1}{2}} e^{ 0.5 \norm{u^0}_{L^2}^2} \norm{u}_{L^2}\\
            &\qquad +\left(\sum_{j=1}^p\tr(PM_jPM_j)\right)^{\frac{1}{2}} e^{ \norm{u^0}_{L^2}^2} \norm{u\otimes u}_{L^2}.
        \end{align*}
\end{theorem}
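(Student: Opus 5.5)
Split $y(t)=Cx(t)+\mathfrak{M}(x(t)\otimes x(t))$ and use the triangle inequality, $\norm{y(t)}_2\le\norm{Cx(t)}_2+\norm{\mathfrak{M}(x(t)\otimes x(t))}_2$. The state equation is exactly that of a bilinear system, so \Cref{thm:redmann} bounds the first term directly and gives the first summand of the claim. All remaining work concerns the quadratic term.

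For the quadratic term, view $z(t):=x(t)\otimes x(t)\in\R^{n^2}$ as the state of an auxiliary bilinear system. Differentiating gives
\begin{align*}
\dot z = (A\otimes I+I\otimes A)z+\sum_{k=1}^m (N_k\otimes I+I\otimes N_k)z\,u_k+(B\otimes x+x\otimes B)u .
\end{align*}
The inhomogeneous term still contains $x$, so this is not directly usable. I would therefore work instead with the Volterra representation $x=\sum_i x_i$, which gives
\begin{align*}
x(t)\otimes x(t)=\sum_{i,j}\int\!\cdots\!\int(\bar P_i\otimes\bar P_j)\bigl(u^{(\cdot)}\otimes\cdots\otimes u^{(\cdot)}\bigr),
\end{align*}
and follow Redmann's proof for the linear term. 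Concretely:
\begin{enumerate}[label=(\roman*)]
\item Write $\mathfrak{M}(x\otimes x)=\sum_{i,j}\int \mathfrak{M}(\bar P_i\otimes\bar P_j)(\cdots)$.
\item Apply Cauchy--Schwarz in the integration variables, separating the kernel factor $h_{M,i,j}$ from the input products.
\item The kernel factor sums to $\sum_{i,j}\int\norm{h_{M,i,j}}_F^2=\tr\bigl(\mathfrak{M}(P\otimes P)\mathfrak{M}^T\bigr)=\sum_j\tr(PM_jPM_j)$, by \Cref{lem:id_M} with $X=P$, as in the proof of \Cref{theo:normS}.
\item The input factor for each index pair $(i,j)$ is an integral over two simplices of squared products of the input.
\end{enumerate}

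The main obstacle is step (iv). In Redmann's argument, the iterated simplex integral for index $i$ is bounded by $\norm{u^0}_{L^2}^{2(i-1)}\norm{u}_{L^2}^2/(i-1)!$. The first step $t_1$ uses the full input $u$ through $B$; the later steps see only components with $N_k\neq0$, hence $u^0$. Summing over $i$ gives $e^{\norm{u^0}^2}\norm{u}^2$. For the pair $(i,j)$, the two innermost input factors, which come from $B$ in each copy, combine into $\norm{u\otimes u}_{L^2}^2$. Here one must keep the common time $t$ and bound $\sup_t\int\norm{u(t-\cdot)\otimes u(t-\cdot)}^2$ by $\norm{u\otimes u}_{L^2}^2$, taking the $L^2$ norm of the function $s\mapsto u(s)\otimes u(s)$ in whatever sense the statement intends. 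The remaining $(i-1)+(j-1)$ factors each contribute $u^0$ terms, bounded by $\norm{u^0}^{2(i-1)}/(i-1)!\cdot\norm{u^0}^{2(j-1)}/(j-1)!$.

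Summing over $i$ and $j$ then yields $e^{2\norm{u^0}^2}$, whose square root is $e^{\norm{u^0}^2_{L^2}}$. Taking square roots of both factors gives the second summand of the claim.

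I expect the delicate bookkeeping to be pairing the two leading inputs into $u\otimes u$ while the $u^0$ factors are bounded separately. If the double-simplex pairing fails, I would fall back on bounding $\int\norm{u^{(t_i)}\otimes u^{(\bar t_j)}}^2$ by a product structure. The exponents in the statement (factor $e^{\norm{u^0}^2}$, twice that of the linear term) confirm that each copy of the state should contribute one $e^{0.5\norm{u^0}^2}$.
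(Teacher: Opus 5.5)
Your Volterra-series route is valid and genuinely different from the paper's. However, step (iv) as you describe it is wrong; your fallback is the correct version.

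\textbf{How the paper argues.} The paper never expands in Volterra series. It writes $x(t)=\int_0^t\Phi_u(t,s)Bu(s)\,ds$ with the fundamental solution $\Phi_u$ and applies Cauchy--Schwarz once on $[0,t]^2$. It then bounds each Kronecker factor by the Loewner-order comparison $\Phi_u(t,s)BB^T\Phi_u(t,s)^T\le e^{\int_0^t\norm{u^0(\tau)}_2^2d\tau}\,\bar Z(t-s,BB^T)$, which integrates to $P_t\otimes P_t\le P\otimes P$. What it borrows from Redmann is this comparison, not a simplex count.

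\textbf{What your route buys.} Your argument bypasses that lemma. Let $U_i=u^{(t_i)}(t)\otimes\dots\otimes u^{(t_i,\dots,t_1)}(t)$. Replace $u$ by $u^0$ in the $N_k$-slots; this is legitimate because the corresponding columns of $\bar P_i$ vanish. Then substitute ordered times $0\le\tau_1\le\dots\le\tau_i\le t$ on $\Delta_i(t)=\{t_\ell\ge 0,\ t_1+\dots+t_i\le t\}$. This gives directly $\int_{\Delta_i(t)}\norm{U_i}_2^2\le\norm{u}_{L^2}^2\norm{u^0}_{L^2}^{2(i-1)}/(i-1)!$. The same computation also reproves \Cref{thm:redmann}, so both summands come from one argument tied to the kernels $h_{C,i},h_{M,i,j}$ of \Cref{def:H2skp}. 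The price is that you need convergence of the Volterra series and the identification $P=\sum_iP_i$, whereas the paper obtains $P$ as $\lim_{t\to\infty}P_t$.

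\textbf{The flaw in (iv).} The problem is the idea of pairing the two innermost inputs at a common time. Since $x_i(t)\otimes x_j(t)$ is a product of two independent iterated integrals, the input factor after Cauchy--Schwarz is $\int_{\Delta_i(t)}\norm{U_i}_2^2\cdot\int_{\Delta_j(t)}\norm{U_j}_2^2$. The innermost times of the two copies are integrated separately. This yields $\norm{u}_{L^2}^4=\int_0^\infty\!\int_0^\infty\norm{u(s_1)\otimes u(s_2)}_2^2\,ds_1\,ds_2$, which is exactly what the paper means by $\norm{u\otimes u}_{L^2}^2$ (see the Cauchy--Schwarz step in its proof). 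Every bound is uniform in $t$, so the supremum needs no extra care.

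The single-variable reading $s\mapsto u(s)\otimes u(s)$ would give $\norm{u}_{L^4}^2$ instead. Your argument cannot produce that, and under that reading the statement is false. Take $n=m=p=1$, $A=-1$, $B=M_1=1$, $C=0$, $N_1=0$ (so $P=\tfrac12$), and $u(s)=e^{-(T-s)/3}$ on $[0,T]$. Then $y(T)\to\tfrac{9}{16}$, while $P\norm{u}_{L^4}^2\to\tfrac12\sqrt{3/4}\approx 0.43$.
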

\begin{proof}
    First, we see with \cref{eq:BQO_output} that 
    \begin{align}
        \norm{y(t)}_2 = \norm{Cx(t)+\mathfrak{M}(x(t)\otimes x(t))}_2 
        \leq \norm{Cx(t)}_2 + \norm{\mathfrak{M}(x(t)\otimes x(t))}_2. \label{eq:y_norm}
    \end{align}
    We focus on the second summand because the first term can be treated using
\Cref{thm:redmann}. 
    Following the discussion in \cite[Section 2]{redmann_bilinear_2021} (in particular,  \cite[Theorem 2.1]{redmann_bilinear_2021}), the solution $x(t)$ of the state equation \eqref{eq:BQO_state} can be written as  $x(t)=\Phi_u(t,0)x_0+\int_0^t \Phi_u(t,s)Bu(s) ds$ with the 
    fundamental solution $\Phi_u(t,s)$ for $s\leq t$ of \cref{eq:BQO_state}, which solves 
    \begin{align*}
        \Phi_u(t,s)=I+\int_s^tA\Phi_u(\tau,s) d\tau + \sum_{k=1}^m\int_s^tN_k\Phi_u(\tau,s)u_k(\tau)d\tau.
    \end{align*}
    The expression for $x(t)$ simplifies for the zero initial condition $x_0=0$ to $x(t)=\int_0^t \Phi_u(t,s)Bu(s) ds$.
    Inserting this expression in the second term in \cref{eq:y_norm} yields
    \begin{align}
        \norm{\mathfrak{M}(x(t)\otimes x(t))}_2 
       &= \norm{\mathfrak{M}\left(\int_0^t \Phi_u(t,s_1)Bu(s_1)ds_1 \otimes \int_0^t \Phi_u(t,s_2)Bu(s_2)ds_2\right)}_2\nonumber\\
        & \leq \int_0^t \int_0^t \norm{\mathfrak{M}\left( \Phi_u(t,s_1)Bu(s_1) \otimes  \Phi_u(t,s_2)Bu(s_2)\right)}_2 ds_1 ds_2\nonumber \\
        & = \int_0^t \int_0^t\norm{\mathfrak{M}\left( \Phi_u(t,s_1)B \otimes  \Phi_u(t,s_2)B\right)(u(s_1)\otimes u(s_2))}_2 ds_1 ds_2\nonumber.
        \end{align}
        This can be further estimated as
        \begin{align}
        & \norm{\mathfrak{M}(x(t)\otimes x(t))}_2 \nonumber\\        
        &\qquad \leq \int_0^t \int_0^t \norm{\mathfrak{M}\left( \Phi_u(t,s_1)B \otimes  \Phi_u(t,s_2)B\right)}_F \norm{u(s_1)\otimes u(s_2)}_2 ds_1 ds_2\nonumber\\
        &\qquad \leq \left(\int_0^t \int_0^t \norm{\mathfrak{M}\left( \Phi_u(t,s_1)B \otimes  \Phi_u(t,s_2)B\right)}_F^2 ds_1 ds_2\right)^{\frac{1}{2}} \label{Mxxnorm1} \\
        &\qquad\qquad\qquad\times \left(\int_0^t \int_0^t\norm{u(s_1)\otimes u(s_2)}^2_2 ds_1 ds_2\right)^{\frac{1}{2}}. \nonumber 
    \end{align}
    First, let us focus on the first integral. For the integrand, we have 
    \begin{align}
      &  \norm{\mathfrak{M}\left( \Phi_u(t,s_1)B \otimes  \Phi_u(t,s_2)B\right)}^2_F \nonumber\\
      &\qquad = \tr(\left(\left(\mathfrak{M}( \Phi_u(t,s_1)B \otimes  \Phi_u(t,s_2)B\right)\right) \left[\mathfrak{M}\left( \Phi_u(t,s_1)B \otimes  \Phi_u(t,s_2)B\right)\right)^T)\nonumber\\
      &\qquad  = \tr(\left(\mathfrak{M}\left( \Phi_u(t,s_1)B \otimes  \Phi_u(t,s_2)B\right)\right) \left(\left( B^T\Phi^T_u(t,s_1) \otimes  B^T\Phi^T_u(t,s_2)\right)\mathfrak{M}^T\right))\nonumber\\
      &\qquad  = \tr(\mathfrak{M}\left( \Phi_u(t,s_1)BB^T\Phi^T_u(t,s_1) \otimes  \Phi_u(t,s_2)BB^T\Phi^T_u(t,s_2)\right) \mathfrak{M}^T) \label{eq:Mppnorm}.
    \end{align}
 In order to bound the term $ \Phi_u(t,s)BB^T\Phi^T_u(t,s)$, following \cite[Lemma 2.3]{redmann_bilinear_2021}, we define
    $\bar{Z}(t,Z_0),t\geq 0$ as the solution to 
    \begin{align*}
        \dot{\bar{Z}}(t)=A\bar{Z}(t) + \bar{Z}(t)A^T + \sum_{k=1}^m N_k\bar{Z}(t)N_k^T, \quad \bar{Z}(0) = Z_0\geq 0.
    \end{align*}   
    Using this definition, we know that 
    \begin{align*}
        \Phi_u(t,s)BB^T\Phi^T_u(t,s) \leq e^{ \int_0^t\norm{u^0(\tau)}_2^2d\tau } \bar{Z}(t-s,BB^T)
    \end{align*}
    holds~\cite[Theorem 4.1]{redmann_bilinear_2021}. With \cref{eq:Mppnorm}, we use this expression to estimate \cref{Mxxnorm1} and obtain
    \begin{align*}
        &\int_0^t \int_0^t \norm{\mathfrak{M}\left( \Phi_u(t,s_1)B \otimes  \Phi_u(t,s_2)B\right)}_F^2 ds_1 ds_2\\
        &~~~=
        \int_0^t \int_0^t \tr(\mathfrak{M}\left( \Phi_u(t,s_1)BB^T\Phi^T_u(t,s_1) \otimes  \Phi_u(t,s_2)BB^T\Phi^T_u(t,s_2)\right) \mathfrak{M}^T) ds_1 ds_2\\
        &~~~\leq  e^{2\int_0^t\norm{u^0(\tau)}_2^2d\tau }
        \int_0^t \int_0^t \tr\left(\mathfrak{M}\left(\bar{Z}(t-s_1,BB^T)\otimes \bar{Z}(t-s_2,BB^T)\right) \mathfrak{M}^T\right) ds_1 ds_2\\
        &~~~\leq  e^{2\int_0^t\norm{u^0(\tau)}_2^2d\tau }  \tr\left(\mathfrak{M} \int_0^t \int_0^t\bar{Z}(s_1,BB^T)\otimes \bar{Z}(s_2,BB^T)  ds_1 ds_2 \mathfrak{M}^T\right) \\
       &~~~ \leq  e^{2\int_0^t\norm{u^0(\tau)}_2^2d\tau } \tr(\mathfrak{M}(P_t\otimes P_t) \mathfrak{M}^T) 
    \end{align*}
    with $P_t=\int_0^t \bar{Z}(s,BB^T) ds$. Next, it holds $P=\lim_{t\to\infty}P_t$ (see proof of \cite[Theorem 4.1]{redmann_bilinear_2021}). Combining this estimate with \cref{eq:y_norm,Mxxnorm1} and the estimate from \cref{thm:redmann} for $\norm{Cx(t)}$, and taking the supremum, we obtain
    \begin{align*}
        \sup_{t\geq 0}\norm{y(t)}_2 &\leq (\tr(CPC^T))^{\frac{1}{2}} e^{ 0.5 \norm{u^0}_{L^2}^2} \norm{u}_{L^2}\\
        &\qquad + (\tr(\mathfrak{M}(P\otimes P) \mathfrak{M}^T))^{\frac{1}{2}}  e^{  \norm{u^0}_{L^2}^2} \norm{u\otimes u}_{L^2}.
    \end{align*}
    Applying \Cref{lem:id_M}  again yields the desired result. 
\end{proof}

Now, based on Theorem \ref{theo:normS}, we can estimate on the output using the $\sH_2$ norm from \Cref{def:H2skp}. More precisely, we loosen the estimate on the output but this enables us to show a direct connection to the $\sH_2$ norm involved here.
\begin{theorem} \label{cor:yest}
Consider a BQO system $\mathcal{S}$ as in \cref{eq:BQO} with initial state $x_0=0$ and suppose that $P$ is the unique solution to \cref{eq:Pfull}. Then it holds further
\begin{align*}
 \sup_{t\geq 0} \norm{y(t)}^2_2 \leq& 
 \norm{\mathcal{S}}^2_{\mathcal{H}_2} 
 \left(e^{  \norm{u^0}_{L^2}^2} \norm{u}^2_{L^2} + e^{ 2\norm{u^0}_{L^2}^2} \norm{u\otimes u}^2_{L^2}\right).
        \end{align*}
\end{theorem}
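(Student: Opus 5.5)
We start from the bound of \Cref{thm:bqo_out_est} and square it. Abbreviate
\[
a:=(\tr(CPC^T))^{\frac{1}{2}},\qquad
b:=\Bigl(\sum_{j=1}^p\tr(PM_jPM_j)\Bigr)^{\frac{1}{2}},
\]
\[
c:=e^{0.5\norm{u^0}_{L^2}^2}\norm{u}_{L^2},\qquad
d:=e^{\norm{u^0}_{L^2}^2}\norm{u\otimes u}_{L^2}.
\]
Then \Cref{thm:bqo_out_est} reads $\sup_{t\geq 0}\norm{y(t)}_2\leq ac+bd$. Both sides are nonnegative, so squaring preserves the inequality. Moreover, $\sup_t\norm{y(t)}_2^2=(\sup_t\norm{y(t)}_2)^2$, which gives $\sup_{t\geq0}\norm{y(t)}_2^2\leq (ac+bd)^2$.

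Next we apply the Cauchy--Schwarz inequality in $\R^2$ to the vectors $(a,b)$ and $(c,d)$:
\[
(ac+bd)^2\leq (a^2+b^2)(c^2+d^2).
\]
By \Cref{theo:normS}, specifically \eqref{eq:H2normbqo}, we have $a^2+b^2=\tr(CPC^T)+\sum_{j=1}^p\tr(PM_jPM_j)=\norm{\mathcal{S}}_{\sH_2}^2$. Also
\[
c^2+d^2=e^{\norm{u^0}_{L^2}^2}\norm{u}_{L^2}^2+e^{2\norm{u^0}_{L^2}^2}\norm{u\otimes u}_{L^2}^2,
\]
which is exactly the claimed bound.

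The only point needing care is the identification $a^2+b^2=\norm{\mathcal{S}}_{\sH_2}^2$. This uses \Cref{theo:normS}, which requires the stability and norm conditions guaranteeing that the $\sH_2$ norm exists and equals the trace formula. We would state that these are implicitly assumed whenever $P$ is the unique solution of \cref{eq:Pfull}, and that the $\sH_2$ norm is understood in the sense of \eqref{eq:H2normbqo}. Note also that $b$ is real: $\tr(PM_jPM_j)=\norm{P^{1/2}M_jP^{1/2}}_F^2\geq0$ because $P\geq 0$ and $M_j$ is symmetric. Apart from this, the argument is elementary.
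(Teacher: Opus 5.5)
Your proof is correct and follows essentially the same route as the paper, which squares the bound of \Cref{thm:bqo_out_est} and bounds the cross term via $2T_1U_1T_2U_2\leq T_1^2U_2^2+T_2^2U_1^2$. That is exactly your Cauchy--Schwarz step in $\R^2$, and the paper then identifies $T_1^2+T_2^2$ with $\norm{\mathcal{S}}_{\sH_2}^2$ via \eqref{eq:H2normbqo}; your observation that this identification tacitly needs the hypotheses of \Cref{theo:normS} is a fair point that the paper also leaves implicit.
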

\begin{proof}
    We see with \Cref{thm:bqo_out_est}, that
    \begin{align*}
            \norm{y(t)}_2 &\leq \underbrace{(\tr(CPC^T))^{\frac{1}{2}}}_{=:T_1} \underbrace{e^{ 0.5 \norm{u^0}_{L^2}^2} \norm{u}_{L^2}}_{=:U_1} +\underbrace{\left(\sum_{j=1}^p\tr(PM_jPM_j)\right)^{\frac{1}{2}}}_{=:T_2} \underbrace{e^{ \norm{u^0}_{L^2}^2} \norm{u\otimes u}_{L^2}}_{=:U_2}
        \end{align*}
    where $T_j$ represent the trace-terms, while $U_j$ represent the exp-times-norm-terms.
    From this, we have  $\norm{y(t)}^2_2 \leq\left(T_1U_1+T_2U_2\right)^2=T_1^2U_1^2+T_2^2U_2^2+2T_1U_1T_2U_2.$
Moreover, from $0\leq(T_1U_2-T_2U_1)^2=T_1^2U_2^2+T_2^2U_1^2-2T_1U_1T_2U_2,$ we have $2T_1U_1T_2U_2 \leq T_1^2U_2^2+T_2^2U_1^2.$
        This gives
        \begin{align*}
            \norm{y(t)}^2_2 &\leq T_1^2U_1^2+T_2^2U_2^2+T_1^2U_2^2+T_2^2U_1^2 = (T_1^2+T_2^2)(U_1^2+U_2^2)
        \end{align*}
        which proves the desired statement after taking the supremum on the left side.
\end{proof}
With this result, we can finally state a bound on the model reduction output error $\norm{y(t)-\widehat{y}(t)}_2$ using the $\sH_2$ distance.    
\begin{corollary} \label{cor:yerr_est}
    Consider the full BQO system $\mathcal{S}$ \cref{eq:BQO} and the reduced BQO system $\widehat{\mathcal{S}}$ \cref{eq:BQOr} with initial states $x_0=0$ and $\widehat{x}_0=0$, respectively. Furthermore, suppose that $P,\widehat{P}, X$ are the unique solutions to \cref{eq:Pfull,eq:P_mix,eq:P_red}. Then the output error $y(t)-\widehat{y}(t)$ satisfies
    \begin{align*}
        &\sup_{t\geq 0} \norm{y(t)-\widehat{y}(t)}^2_2 \leq \norm{\mathcal{S}-\widehat{\mathcal{S}}}_{\mathcal{H}_2}^2 
        \left(e^{ \norm{u^0}_{L^2}^2} \norm{u}^2_{L^2} 
            + e^{ 2\norm{u^0}_{L^2}^2} \norm{u\otimes u}^2_{L^2}\right).
    \end{align*}
\end{corollary}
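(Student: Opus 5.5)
The plan is to apply \Cref{cor:yest} to the error system $\mathcal{S}_e=\mathcal{S}-\widehat{\mathcal{S}}$ introduced before \Cref{cor:err_Lyap}. First, $\mathcal{S}_e$ is itself a BQO system of order $n+r$, with matrices $A_e=\diag(A,\widehat{A})$, $N_{e,k}=\diag(N_k,\widehat{N}_k)$, $B_e=\left[\begin{smallmatrix}B\\ \widehat{B}\end{smallmatrix}\right]$, $C_e=[C\;\,-\widehat{C}]$ and $M_{e,j}=\diag(M_j,-\widehat{M}_j)$, and it has zero initial state $x_e(0)=0$. Its output is exactly $y_e(t)=y(t)-\widehat{y}(t)$, because the quadratic term splits blockwise as $x_e^TM_{e,j}x_e=x^TM_jx-\widehat{x}^T\widehat{M}_j\widehat{x}$.

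The key observation is that the input-dependent factor in \Cref{cor:yest} does not depend on the system. The vector $u^0$ keeps the component $u_k$ whenever $N_k\neq0$. For the error system it keeps $u_k$ whenever $N_{e,k}=\diag(N_k,\widehat{N}_k)\neq 0$, which may include some extra components. To handle this, I will either note that the bound in \Cref{thm:redmann} and \Cref{thm:bqo_out_est} only needs $u^0$ to contain the components with nonzero bilinear coefficient, or simply interpret $u^0$ for $\mathcal{S}_e$. In the worst case I will replace $\norm{u^0}_{L^2}$ by $\norm{u}_{L^2}$. Since the corollary is stated with the same $u^0$ as before, I will remark that $u^0$ is understood with respect to the nonzero $N_{e,k}$. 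When the reduced model comes from the Petrov--Galerkin projection \eqref{eq_PetrovGalerkin}, $N_k=0$ implies $\widehat{N}_k=0$, so the two definitions coincide.

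Next I need the reachability Gramian $P_e$ of $\mathcal{S}_e$ to be the unique solution of its Lyapunov equation, which is the hypothesis of \Cref{cor:yest}. \Cref{cor:err_Lyap} supplies this, with $P_e=\left[\begin{smallmatrix}P&X\\X^T&\widehat{P}\end{smallmatrix}\right]$ built from the unique solutions $P$, $\widehat{P}$, $X$ assumed in the statement. Applying \Cref{cor:yest} then gives
\begin{align*}
\sup_{t\ge0}\norm{y(t)-\widehat{y}(t)}_2^2\le \norm{\mathcal{S}_e}_{\sH_2}^2\left(e^{\norm{u^0}_{L^2}^2}\norm{u}_{L^2}^2+e^{2\norm{u^0}_{L^2}^2}\norm{u\otimes u}_{L^2}^2\right).
\end{align*}

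Finally I will identify $\norm{\mathcal{S}_e}_{\sH_2}^2$ with $\norm{\mathcal{S}-\widehat{\mathcal{S}}}_{\sH_2}^2$ as given in \Cref{cor:H2err}. By \Cref{theo:normS}, $\norm{\mathcal{S}_e}_{\sH_2}^2=\tr(C_eP_eC_e^T)+\sum_j\tr(P_eM_{e,j}P_eM_{e,j})$. Expanding the blocks gives $\tr(C_eP_eC_e^T)=\tr(CPC^T)-2\tr(CX\widehat{C}^T)+\tr(\widehat{C}\widehat{P}\widehat{C}^T)$. Similarly,
\begin{align*}
\tr(P_eM_{e,j}P_eM_{e,j})=\tr(PM_jPM_j)-2\tr(X^TM_jX\widehat{M}_j)+\tr(\widehat{P}\widehat{M}_j\widehat{P}\widehat{M}_j),
\end{align*}
where the symmetry of $M_j$ and $\widehat{M}_j$ is used to combine the two cross terms. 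This is exactly the expression in \Cref{cor:H2err}. The only real subtlety is the $u^0$ bookkeeping noted above; everything else is direct substitution.
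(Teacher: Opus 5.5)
Your proposal is correct and is essentially the paper's argument, which consists of the single line of applying \Cref{cor:yest} to the error system $\mathcal{S}_e=\mathcal{S}-\widehat{\mathcal{S}}$. The details you add are left implicit in the paper and you handle them correctly: the block expansion showing that $\norm{\mathcal{S}_e}_{\sH_2}^2$ matches \Cref{cor:H2err}, and the observation that $u^0$ must be read with respect to $N_{e,k}=\diag(N_k,\widehat{N}_k)$, which coincides with the original $u^0$ for Petrov--Galerkin reduced models.
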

\begin{proof}
    Follows directly from \Cref{cor:yest} with $\mathcal{S}_e=\mathcal{S}-\widehat{\mathcal{S}}$.
\end{proof}

\section{$\sH_2$ optimal reduction for BQO systems}
\label{sec:6}
    In this section, we discuss model order reduction in an optimal $\sH_2$ sense. \Cref{cor:yerr_est} gives us an estimate for the output error due to model reduction. Hereby, the output error is dominated by a constant term consisting of the input norms and the squared $\sH_2$ system error $\norm{\mathcal{S}-\widehat{\mathcal{S}}}_{\sH_2}^2$. 
    Hence, the objective of model order reduction in the $\sH_2$ sense is to determine a reduced system $\widehat{\mathcal{S}}$ that minimizes the $\sH_2$ approximation error for a given system $\mathcal{S}$. In view of \Cref{cor:yerr_est}, this ensures that the corresponding output error is small.

    \subsection{$\sH_2$ optimality for BQO systems}
In this context, we seek conditions under which the error $\norm{\mathcal{S} - \widehat{\mathcal{S}}}_{\mathcal{H}_2}^2$ is minimized. 
Based on the $\sH_2$ analysis of \Cref{sec:H2analysys}, we are now in a position to state a theorem characterizing locally $\sH_2$ optimal reduced systems which fulfill the first-order necessary optimality conditions. 
\begin{theorem}\label{thm:grad}
 Consider the full BQO system $\mathcal{S}$ \cref{eq:BQO} and assume that the reduced BQO system $\widehat{\mathcal{S}}$ \cref{eq:BQOr} is locally $\sH_2$ optimal. Furthermore, let $\widehat{P},X$ be the unique solutions to \cref{eq:P_red,eq:P_mix} and also $\Psi=\Psi^T\in\R^{r\times r}$ and $\Pi\in\R^{n\times r}$ be the unique solutions to the generalized Lyapunov and Sylvester equations
    \begin{subequations}
    \begin{align}
        \widehat{A}^T\Psi+\Psi\widehat{A}+\sum_{k=1}^m\widehat{N}_k^T\Psi\widehat{N}_k+2\sum_{j=1}^p\widehat{M}_j\widehat{P}\widehat{M}_j+\widehat{C}^T\widehat{C}&=0, \label{eq:lyap_pi1} \\
        A^T\Pi+\Pi\widehat{A}+\sum_{k=1}^mN_k^T\Pi\widehat{N}_k+2\sum_{j=1}^pM_jX\widehat{M}_j+C^T\widehat{C}&=0. \label{eq:lyap_pi2}
    \end{align}
     \end{subequations}
    Then  the following first-order optimality conditions hold  \\
    \begin{subequations}\label{eq:optcond}
    \noindent\begin{minipage}{.5\linewidth}    
    \begin{align}
        0 &= \Psi\widehat{P}-\Pi^TX, \label{eq:optcond1}\\
        0 &= \Psi\widehat{B}-\Pi^TB,\label{eq:optcond2}\\
        0 &= \Psi\widehat{N}_k\widehat{P}-\Pi^TN_kX,\label{eq:optcond3}        
    \end{align}    
    \end{minipage}
    \begin{minipage}{.45\linewidth}
    \begin{align}
    0 &= \widehat{C}\widehat{P}-CX,\label{eq:optcond4}\\
        0 &= \widehat{P}\widehat{M}_j\widehat{P}-X^TM_jX,\label{eq:optcond5}
    \end{align}
    \end{minipage}
    \end{subequations}  \\ 

    \noindent
    for all $k=1,\dots, m$ and $j=1,\dots, p$.
\end{theorem}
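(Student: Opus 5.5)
Our approach is to treat the squared error $\mathcal{J}:=\norm{\mathcal{S}-\widehat{\mathcal{S}}}_{\sH_2}^2$ from \Cref{cor:H2err} as a smooth function of the reduced matrices $(\widehat{A},\widehat{B},\widehat{C},\widehat{N}_k,\widehat{M}_j)$. Local optimality forces every partial gradient to vanish. Only the terms depending on the reduced system matter:
\[
\mathcal{J}_r:=-2\tr(CX\widehat{C}^T)+\tr(\widehat{C}\widehat{P}\widehat{C}^T)+\sum_{j=1}^p\Bigl(-2\tr(X^TM_jX\widehat{M}_j)+\tr(\widehat{P}\widehat{M}_j\widehat{P}\widehat{M}_j)\Bigr).
\]
Here $X$ and $\widehat{P}$ depend implicitly on $\widehat{A},\widehat{B},\widehat{N}_k$ through \cref{eq:P_mix,eq:P_red}.

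The gradients with respect to $\widehat{C}$ and $\widehat{M}_j$ are explicit, since $X$ and $\widehat{P}$ do not depend on these variables. By \Cref{lem:gradtr}, $\nabla_{\widehat{C}}\mathcal{J}=2(\widehat{C}\widehat{P}-CX)$, which gives \cref{eq:optcond4}. Using the symmetry of $\widehat{P}$ and $\widehat{M}_j$, $\nabla_{\widehat{M}_j}\mathcal{J}=2(\widehat{P}\widehat{M}_j\widehat{P}-X^TM_jX)$, which gives \cref{eq:optcond5}.

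For the remaining variables we perturb, say, $\widehat{A}\mapsto\widehat{A}+\Delta$ and take first variations $\Delta X$ and $\Delta\widehat{P}$. Differentiating \cref{eq:P_mix} gives
\[
A\Delta X+\Delta X\widehat{A}^T+\sum_k N_k\Delta X\widehat{N}_k^T+X\Delta^T=0 .
\]
Differentiating \cref{eq:P_red} gives
\[
\widehat{A}\Delta\widehat{P}+\Delta\widehat{P}\widehat{A}^T+\sum_k\widehat{N}_k\Delta\widehat{P}\widehat{N}_k^T+\Delta\widehat{P}+\widehat{P}\Delta^T=0 .
\]
The variation of $\mathcal{J}_r$ is
\[
\delta\mathcal{J}_r=\tr\Bigl(\Delta X^T\bigl(-2C^T\widehat{C}-4\textstyle\sum_j M_jX\widehat{M}_j\bigr)\Bigr)+\tr\Bigl(\Delta\widehat{P}\bigl(\widehat{C}^T\widehat{C}+2\textstyle\sum_j\widehat{M}_j\widehat{P}\widehat{M}_j\bigr)\Bigr).
\]
The factor $2$ in the quadratic terms comes from $X$ and $\widehat{P}$ each appearing twice. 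This explains why \cref{eq:lyap_pi1,eq:lyap_pi2} carry $2\sum_j$ rather than $\sum_j$. We recognize the constant parts as $-2$ times the inhomogeneity of \cref{eq:lyap_pi2} and $+1$ times that of \cref{eq:lyap_pi1}.

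The key step is then \Cref{lem:sylv_tr}. Applied to the pair consisting of the $\Delta X$-equation and \cref{eq:lyap_pi2}, it gives $\tr\bigl((2C^T\widehat{C}+4\sum_jM_jX\widehat{M}_j)^T\Delta X\bigr)=2\tr\bigl((X\Delta^T)^T\Pi\bigr)$. Applied to the pair consisting of the $\Delta\widehat{P}$-equation and \cref{eq:lyap_pi1}, it gives $\tr\bigl((\widehat{C}^T\widehat{C}+2\sum_j\widehat{M}_j\widehat{P}\widehat{M}_j)\Delta\widehat{P}\bigr)=\tr\bigl((\Delta\widehat{P}+\widehat{P}\Delta^T)^T\Psi\bigr)=2\tr(\Delta\widehat{P}\Psi)$, using the symmetry of $\Psi$ and $\widehat{P}$. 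Unique solvability is guaranteed by \Cref{thm:SylvEq} (and its reduced analogue). Hence $\delta\mathcal{J}_r=2\tr\bigl(\Delta^T(\Psi\widehat{P}-\Pi^TX)\bigr)$, which yields \cref{eq:optcond1}.

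The same computation applies to $\widehat{N}_k$, with forcing terms $N_kX\Delta^T$ and $\widehat{N}_k\widehat{P}\Delta^T+\Delta\widehat{P}\widehat{N}_k^T$; this yields \cref{eq:optcond3}. For $\widehat{B}$ the forcing terms are $B\Delta^T$ and $\widehat{B}\Delta^T+\Delta\widehat{B}^T$; this yields \cref{eq:optcond2}.

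The main obstacle is bookkeeping: matching the transposition conventions so that each derivative equation has exactly the form of the first equation in \cref{eq:XY}, paired with the correct adjoint equation. This requires $B=\widehat{A}^T$ and $H_k=\widehat{N}_k$ in \Cref{lem:sylv_tr}. One must also use the symmetry of the forcing in the $\widehat{P}$-equation together with $\Psi=\Psi^T$ to collapse the two terms into one.
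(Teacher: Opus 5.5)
Your proof is correct, but it follows a different route from the paper's.

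\textbf{The paper's route.} The paper writes a Lagrange function $\mathfrak{L}$ with multipliers $\Lambda_1\in\R^{r\times r}$ and $\Lambda_2\in\R^{n\times r}$ for the constraints \cref{eq:P_red} and \cref{eq:P_mix}. It obtains the adjoint equations from $\nabla_{\widehat{P}}\mathfrak{L}=0$ and $\nabla_X\mathfrak{L}=0$, which identify $\Lambda_1=\Psi$ and $\Lambda_2=-2\Pi$. It then differentiates $\mathfrak{L}$ with respect to $\widehat{A},\widehat{B},\widehat{N}_k$. \Cref{lem:sylv_tr} is not used in that proof.

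\textbf{Your route.} You treat $X$ and $\widehat{P}$ as implicit functions of the reduced matrices and write down their sensitivity equations. You then eliminate the variations by pairing those equations with \cref{eq:lyap_pi2} and \cref{eq:lyap_pi1} through \Cref{lem:sylv_tr}; this is the classical trace-duality (adjoint) argument. Your weights $-2$ and $+1$ are exactly the paper's multipliers $\Lambda_2=-2\Pi$ and $\Lambda_1=\Psi$. Your gradients $2(\Psi\widehat{P}-\Pi^TX)$, $2(\Psi\widehat{B}-\Pi^TB)$ and $2(\Psi\widehat{N}_k\widehat{P}-\Pi^TN_kX)$ coincide with the paper's $\nabla_{\widehat{A}}\mathfrak{L}$, $\nabla_{\widehat{B}}\mathfrak{L}$ and $\nabla_{\widehat{N}_k}\mathfrak{L}$.

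\textbf{What each approach buys.} Your approach computes the gradient of the error functional itself, not of a Lagrangian, so it needs no multiplier theory. It relies only on a lemma the paper already proves. The Lagrangian approach is more mechanical: the adjoint equations, including the factor $2$ in front of $\sum_j$, fall out of stationarity instead of having to be known in advance. That matters when deriving \cref{eq:lyap_pi1,eq:lyap_pi2} from scratch rather than verifying them.

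\textbf{Solvability and differentiability.} You do not need to import the norm conditions of \Cref{thm:SylvEq}. The theorem already assumes that $X,\widehat{P},\Pi,\Psi$ are unique solutions. Hence the generalized Sylvester operators and their Frobenius adjoints are nonsingular. This gives unique solvability of your sensitivity equations, which is the hypothesis of \Cref{lem:sylv_tr}. It also gives differentiability of $X$ and $\widehat{P}$ via the implicit function theorem, which both approaches use tacitly.

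\textbf{Notation.} You use $\Delta\widehat{P}$ both for the variation of $\widehat{P}$ and for the product of $\Delta$ with $\widehat{P}$. Examples are the forcing term $\Delta\widehat{P}+\widehat{P}\Delta^T$ and the expression $2\tr(\Delta\widehat{P}\Psi)$. Write $\delta\widehat{P}$ and $\delta X$ for the variations to remove the ambiguity.
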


\begin{proof}
Note that the reduced model $\widehat{\mathcal{S}}$ is fully characterized by its state-space matrices 
$\widehat{A}, \widehat{B}, \widehat{C}, \widehat{N}_k, \widehat{M}_j$. \Cref{cor:H2err} gives an explicit formula for the $\sH_2$ error based on these matrices and the matrices $\widehat{P},X$ as solutions to \cref{eq:P_red,eq:P_mix}. Therefore, following the approach from \cite[Section~2.4.4.2]{mlinaric_structure-preserving} for structured linear dynamical systems, the $\sH_2$ optimal model reduction problem for BQO systems can be equivalently written as the constrained optimization problem
\begin{align*}
    \min_{\widehat{A}, \widehat{B}, \widehat{C}, \widehat{N}_k, \widehat{M}_j} \;& 
    \tr(\widehat{C}\widehat{P}\widehat{C}^T) - 2\tr(CX\widehat{C}^T) 
    + \sum_{j=1}^p \tr(\widehat{P}\widehat{M}_j \widehat{P}\widehat{M}_j) 
    - 2\tr(X^T M_j X \widehat{M}_j), \\
    \text{subject~to~} \;& \cref{eq:P_red} \ \text{and} \ \cref{eq:P_mix}.
\end{align*}
Here in this formulation, the constant terms in $\norm{\mathcal{S} - \widehat{\mathcal{S}}}_{\mathcal{H}_2}^2$
from~\Cref{cor:H2err} have been omitted.
    The associated Lagrange function is given as
    \begin{align*}
        &\mathfrak{L}(\widehat{A},\widehat{B},\widehat{C},\widehat{N}_k,\widehat{M}_j,\widehat{P},X,\Lambda_1,\Lambda_2) \\
        &= 
        \tr(\widehat{C}\widehat{P}\widehat{C}^T)-2\tr(CX\widehat{C}^T) +\sum_{j=1}^p\tr(\widehat{P}\widehat{M}_j\widehat{P}\widehat{M}_j)-2\tr(X^TM_jX\widehat{M}_j) \\
        &\qquad\qquad+ \tr\left(\Lambda_1^T\left(\widehat{A}\widehat{P}+\widehat{P}\widehat{A}^T+\sum_{k=1}^m\widehat{N}_k\widehat{P}\widehat{N}_k^T+\widehat{B}\widehat{B}^T\right)\right) \\
        &\qquad\qquad+ \tr\left(\Lambda_2^T\left(AX+X\widehat{A}^T+\sum_{k=1}^mN_kX\widehat{N}_k^T+B\widehat{B}^T\right)\right)
    \end{align*}
    with the Lagrange multipliers $\Lambda_1\in\R^{r\times r}$ and $\Lambda_2\in\R^{n\times r}$.
    Next, we compute the gradients of $\mathfrak{L}$ using \Cref{lem:gradtr} and employing the trace identities $\tr(AB)=\tr(BA)$ and $\tr(A)=\tr(A^T)$ and  the symmetry of $\widehat{P}$, $M_j$, and $\widehat{M}_j$. After some algebraic manipulations, we obtain
       \begin{align}
        \nabla_{\widehat{C}}\mathfrak{L} &=2\widehat{C}\widehat{P}-2CX,\label{eq:gradC} \\
        \nabla_{\widehat{M}_j}\mathfrak{L} &=2\widehat{P}\widehat{M}_j\widehat{P}-2 X^TM_jX,\label{eq:gradM} \\
        \nabla_{\widehat{P}}\mathfrak{L} = \widehat{C}^T\widehat{C}+2\sum_{j=1}^p\widehat{M}_j\widehat{P}\widehat{M}_j+&\Lambda_1^T\widehat{A}+\widehat{A}^T\Lambda_1^T + \sum_{k=1}^m\widehat{N}_k^T\Lambda_1^T\widehat{N}_k,\nonumber \\
        \nabla_{X}\mathfrak{L} = -2C^T\widehat{C}-2\sum_{j=1}^pM_jX\widehat{M}_j-&2\sum_{j=1}^pM_{j}X\widehat{M}_j + A^T\Lambda_2+\Lambda_2\widehat{A}+\sum_{k=1}^mN_k^T\Lambda_2\widehat{N}_k.  \nonumber      
    \end{align}
    Then, \cref{eq:optcond4,eq:optcond5} follow directly with setting \cref{eq:gradC,eq:gradM} to zero. Next, $\nabla_{\widehat{P}}\mathfrak{L}=0$ and $\nabla_{X}\mathfrak{L}=0$ yield $\Lambda_1=\Lambda_1^T=\Psi$ from \cref{eq:lyap_pi1} and $\Lambda_2=-2\Pi$ from \cref{eq:lyap_pi2}. 
    It remains to compute the remaining gradients:
    \begin{align*}
        \nabla_{\widehat{A}}\mathfrak{L} &= \Lambda_1\widehat{P}+\Lambda_1\widehat{P}+ \Lambda_2^TX = 2\Psi\widehat{P}-2\Pi^TX,\\
        \nabla_{\widehat{B}}\mathfrak{L} &= \Lambda_1\widehat{B}+\Lambda_1\widehat{B}+ \Lambda_2^TB = 2\Psi\widehat{B}-2\Pi^TB,\\
        \nabla_{\widehat{N}_k}\mathfrak{L} &= \Lambda_1\widehat{N}_k\widehat{P}+\Lambda_1\widehat{N}_k\widehat{P}+ \Lambda_2^TN_kX = 2\Psi\widehat{N}_k\widehat{P}-2\Pi^TN_kX.
    \end{align*}    
    Then setting $\nabla_{\widehat{A}}\mathfrak{L}=0$, $\nabla_{\widehat{B}}\mathfrak{L}=0$, and $\nabla_{\widehat{N}_k}\mathfrak{L}=0$ yields 
    \cref{eq:optcond1,eq:optcond2,eq:optcond3}.
\end{proof}

Having derived the necessary optimality conditions for minimizing the $\sH_2$ error, the next important question we want to answer is whether the reduced order quantities satisfying optimality conditions of~\Cref{thm:grad} can be obtained via a Petrov-Galerkin projection as in~\eqref{eq_PetrovGalerkin}. The following theorem characterizes the construction of an $\sH_2$ optimal reduced system from this projection perspective.
\begin{theorem} \label{thm:redsysproj}
    Consider the full BQO system $\mathcal{S}$ \cref{eq:BQO} and assume that the reduced BQO system $\widehat{\mathcal{S}}$ \cref{eq:BQOr} is a locally $\sH_2$ optimal approximant. Furthermore, suppose that $\widehat{P},X$ are the unique solutions to \cref{eq:P_red,eq:P_mix} and also suppose that $\Psi$ and $\Pi$ are the unique solutions to \cref{eq:lyap_pi1,eq:lyap_pi2}. If $\widehat{P}$ and $\Psi$ are invertible, the optimal reduced BQO system can be obtained via a Petrov--Galerkin-projection as in~\eqref{eq_PetrovGalerkin} 
    with $V=X\widehat{P}^{-1}$ and $W=\Pi\Psi^{-1}$.
\end{theorem}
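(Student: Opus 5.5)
We use the first-order conditions \eqref{eq:optcond} from \Cref{thm:grad} and verify directly that the Petrov--Galerkin formulas \eqref{eq_PetrovGalerkin} with $V=X\widehat{P}^{-1}$ and $W=\Pi\Psi^{-1}$ reproduce $\widehat{A},\widehat{B},\widehat{C},\widehat{N}_k,\widehat{M}_j$.

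\emph{Step 1: normalization.} Condition \eqref{eq:optcond1} gives $\Pi^TX=\Psi\widehat{P}$. Hence
\[
W^TV=\Psi^{-1}\Pi^TX\widehat{P}^{-1}=\Psi^{-1}\Psi\widehat{P}\widehat{P}^{-1}=I_r .
\]
Here we use the symmetry $\Psi=\Psi^T$ asserted in \Cref{thm:grad}, so $W^T=\Psi^{-1}\Pi^T$. Thus $W^TV$ is nonsingular, the projection is well defined, and $(W^TV)^{-1}=I$ drops out of all formulas.

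\emph{Step 2: the input, output and bilinear matrices.} By \eqref{eq:optcond2}, $W^TB=\Psi^{-1}\Pi^TB=\Psi^{-1}\Psi\widehat{B}=\widehat{B}$. By \eqref{eq:optcond4}, $CV=CX\widehat{P}^{-1}=\widehat{C}\widehat{P}\widehat{P}^{-1}=\widehat{C}$. By \eqref{eq:optcond3}, $W^TN_kV=\Psi^{-1}\Pi^TN_kX\widehat{P}^{-1}=\Psi^{-1}\Psi\widehat{N}_k\widehat{P}\widehat{P}^{-1}=\widehat{N}_k$. By \eqref{eq:optcond5}, $V^TM_jV=\widehat{P}^{-1}X^TM_jX\widehat{P}^{-1}=\widehat{P}^{-1}\widehat{P}\widehat{M}_j\widehat{P}\widehat{P}^{-1}=\widehat{M}_j$, using that $\widehat{P}$ is symmetric.

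\emph{Step 3: the matrix $\widehat{A}$.} This is the main obstacle, because there is no direct gradient condition of the form $\Pi^TAX=\Psi\widehat{A}\widehat{P}$. We extract it from the Sylvester equations by multiplying \eqref{eq:P_mix} from the left by $\Pi^T$:
\[
\Pi^TAX+\Pi^TX\widehat{A}^T+\sum_{k=1}^m\Pi^TN_kX\widehat{N}_k^T+\Pi^TB\widehat{B}^T=0.
\]
We then replace $\Pi^TX$, $\Pi^TN_kX$ and $\Pi^TB$ using \eqref{eq:optcond1}--\eqref{eq:optcond3}, which gives
\[
\Pi^TAX+\Psi\Bigl(\widehat{P}\widehat{A}^T+\sum_{k=1}^m\widehat{N}_k\widehat{P}\widehat{N}_k^T+\widehat{B}\widehat{B}^T\Bigr)=0.
\]
By \eqref{eq:P_red}, the bracket equals $-\widehat{A}\widehat{P}$. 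Hence $\Pi^TAX=\Psi\widehat{A}\widehat{P}$, and therefore $W^TAV=\Psi^{-1}\Pi^TAX\widehat{P}^{-1}=\widehat{A}$.

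This completes the identification of all five reduced matrices with their projected counterparts, which is the claim of \Cref{thm:redsysproj}. The only inputs needed beyond the optimality conditions are the symmetry of $\Psi$ and $\widehat{P}$ and the invertibility assumptions in the statement.
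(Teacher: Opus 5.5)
Your proof is correct and follows essentially the same route as the paper: you read off $W^TV=I_r$, $\widehat{B}$, $\widehat{C}$, $\widehat{N}_k$ and $\widehat{M}_j$ directly from \eqref{eq:optcond}, and you recover $\widehat{A}$ by left-multiplying \eqref{eq:P_mix}, where you use $\Pi^T$ and the paper uses $W^T=\Psi^{-1}\Pi^T$. Your last step, which compares with \eqref{eq:P_red} and cancels the invertible $\widehat{P}$, is slightly cleaner than the paper's appeal to uniqueness of the solution of \eqref{eq:P_red}.
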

\begin{proof}
    Since $\widehat{P}$ and $\Psi$ are invertible, we can define $V:=X\widehat{P}^{-1}$ and $W:=\Pi\Psi^{-1}$. Using \cref{eq:optcond1}, we see $W^TV=\Psi^{-1}\Pi^TX\widehat{P}^{-1}= I_r$. Next, \cref{eq:optcond2} yields $\widehat{B}=\Psi^{-1}\Pi^TB=W^TB$ and \cref{eq:optcond3} yields $\widehat{N}_k= \Psi^{-1}\Pi^TN_kX\widehat{P}^{-1}= W^TN_kV$. Similarly, \cref{eq:optcond4} leads to $\widehat{C}=CX\widehat{P}^{-1}$ and \cref{eq:optcond5} leads to $\widehat{M}_j=\widehat{P}^{-1}X^TM_jX\widehat{P}^{-1}=V^TM_jV$. 
    Finally, using $X = V\widehat{P}$, we left-multiply \cref{eq:P_mix} by $W^T$ to obtain
    \begin{align*}
         0&= W^TAV\widehat{P}+W^TV\widehat{P}\widehat{A}^T + \sum_{k=1}^mW^TN_kV\widehat{P}\widehat{N}_k^T+ W^TB\widehat{B}^T\\
        &= \left(W^TAV\right)\widehat{P}+\widehat{P}\widehat{A}^T + \sum_{k=1}^m\widehat{N}_k\widehat{P}\widehat{N}_k^T+ \widehat{B}\widehat{B}^T.
    \end{align*}
    Since we have assumed that the solution to \cref{eq:P_red} is unique, it follows $\widehat{A}=W^TAV$. It therefore follows from Theorem~\ref{thm:grad} that the reduced system considered here satisfies the optimality conditions~\eqref{eq:optcond}.
\end{proof}

Hence, \Cref{thm:redsysproj} provides us with a formulation of an $\sH_2$ optimal reduced system which can be obtained by a Petrov-Galerkin projection as in the linear~\cite{Wil70}, bilinear~\cite{benner_birka_2012}, and LQO cases~\cite{reiter_2024}. 
\Cref{thm:redsysproj} shows that the optimal Petrov-Galerkin projection is achieved by setting
 $V=X\widehat{P}^{-1}$ and $W=\Pi\Psi^{-1}$. In other words, the optimal reduced BQO system is given by $\widehat{\mathcal{S}}=(\widehat{A},\widehat{B},\widehat{C},\widehat{\mathcal{N}},\widehat{\mathcal{M}})= (W^TAV, W^TB, CV, \widehat{\mathcal{N}},\widehat{\mathcal{M}})$ with $\widehat{\mathcal{N}}=\begin{bmatrix} W^TN_1V \, \dots \, W^TN_mV \end{bmatrix}$ and  $\widehat{\mathcal{M}}=\begin{bmatrix} V^TM_1V \, \dots \, V^TM_pV \end{bmatrix}$.
Using a state-space transformation $\widehat{x}=\widehat{P}z$, inserting $V=X\widehat{P}^{-1}$ and $W=\Pi\Psi^{-1}$, and multiplying by $\Psi$ on the left, we receive the transformed system
    \begin{align*}
    \Psi\widehat{P}\dot{z}(t) &= \Psi W^T AV\widehat{P}z(t) + \sum_{k=1}^m \Psi W^T N_kV\widehat{P}z(t) u_k(t)+ \Psi W^T B u(t)  \\
    &= \Pi^T AXz(t) + \sum_{k=1}^m \Pi^T N_kXz(t) u_k(t)+ \Pi^T B u(t),  \\
    \widehat{y}(t)&= CV\widehat{P}z(t)+\begin{bmatrix}
        z(t)^T\widehat{P}V^TM_1V\widehat{P}z(t) \\
        \vdots \\
        z(t)^T\widehat{P}V^TM_pV\widehat{P}z(t)
    \end{bmatrix} 
    =CXz(t)+\begin{bmatrix}
        z(t)^TX^TM_1Xz(t) \\
        \vdots \\
        z(t)^TX^TM_pXz(t)
    \end{bmatrix}.
\end{align*}
Left-multiplying the first equation by $(\Psi\widehat P)^{-1}$ and making use of \eqref{eq:optcond1}, that is,  $\Psi\widehat{P}=\Pi^TX$, we obtain an equivalent representation of the optimal reduced system with system matrices 

\begin{subequations}\label{eq:red_mat}
\noindent\begin{minipage}{.5\linewidth}
    \begin{align}
    \widehat{A} &:=(\Pi^TX)^{-1}\Pi^TAX, \label{eq:red_mat1}\\
    \widehat{B} &:=(\Pi^TX)^{-1}\Pi^TB, \label{eq:red_mat2}\\
    \widehat{C} &:= CX, \label{eq:red_mat3}
    \end{align}
    \end{minipage}
    \begin{minipage}{.45\linewidth}
    \begin{align}
    \widehat{N}_k &:= (\Pi^TX)^{-1}\Pi^TN_kX, \label{eq:red_mat4}\\
    \widehat{M}_j &:= X^TM_jX, \label{eq:red_mat5}
    \end{align}
    \end{minipage}
\end{subequations}\\ 

    \noindent
    for all $k=1,\dots, m$ and $j=1,\dots, p$. As~\eqref{eq:red_mat} illustrates, to construct the optimal reduced model via projection, we only need $X$ and $\Pi$ and do not need to compute $\widehat{P}$ and $\Psi$ explicitly. 

We have thus established the theoretical foundation for $\sH_2$ optimal model reduction via a Petrov--Galerkin projection. A key difficulty, however, is that the matrices $\Pi$ and $X$ needed to construct the optimal reduced model as in~\eqref{eq:red_mat} must be computed as solutions to \cref{eq:P_mix,eq:lyap_pi2}, which depend on the reduced system itself. 
We will address this issue an iterative projection procedure, where the system is repeatedly updated until convergence, i.e., until the optimality conditions are satisfied. This approach is discussed in detail in the next section.

\subsection{BQO-TSIA}
As the discussion of the previous section reveals,  constructing a (locally) $\sH_2$ optimal
reduced BQO system requires an iterative algorithm. This is what we develop in this section which generalizes IRKA \cite{gugercin_2008} and TSIA \cite{xu_optimal_2011} for linear systems, BIRKA \cite[Algorithm 1]{benner_birka_2012} for bilinear systems and the LQO-TSIA \cite[Algorithm 1]{reiter_2024} for linear quadratic output systems to BQO systems. The resulting algorithm, namely \Cref{alg:BQO_TSIA}, enforces the optimality conditions upon convergence. 

\Cref{alg:BQO_TSIA} requires the system matrices of a BQO system of full order and those of an initial, reduced BQO system. In each iteration, $X\in\R^{n\times r}$ and $\Pi\in\R^{n\times r}$ are computed as solutions to \eqref{eq:alg1} in step 2. Based on $X$ and $\Pi$, a Petrov--Galerkin projection is applied to the full BQO system in step 4, according to \eqref{eq:red_mat}. Then, this new reduced system is used for the next iteration to compute a new projection. When a certain termination condition is reached, the algorithm stops and the last computed reduced BQO system is obtained as the output of the algorithm.  In the following theorem, we show that upon convergence of \Cref{alg:BQO_TSIA} the resulting reduced system fulfills the desired optimality conditions.

\begin{algorithm}
    \caption{\texttt{Bilinear quadratic output two-sided iteration algorithm (BQO-TSIA)}}\label{alg:BQO_TSIA}
    \begin{algorithmic}[1]
    \Require $A, B, C, N_k, M_j$ as in \eqref{eq:BQO}, $\widehat{A}, \widehat{B}, \widehat{C}, \widehat{N}_k, \widehat{M}_j$ as in \eqref{eq:BQOr}
    \Ensure $\widehat{A}^{opt}, \widehat{B}^{opt}, \widehat{C}^{opt}, \widehat{N}^{opt}_k, \widehat{M}^{opt}_j$ as in \eqref{eq:BQOr}
    \While{not converged}
        \State Solve 
        \begin{subequations}  \label{eq:alg1}     
        \begin{align}
            AX+X\widehat{A}^T+\sum_{k=1}^mN_kX\widehat{N}_k^T+B\widehat{B}^T&=0 \label{eq:alg1a}\\
            A^T\Pi+\Pi\widehat{A}+\sum_{k=1}^mN_k^T\Pi\widehat{N}_k +2\sum_{j=1}^pM_jX\widehat{M}_j + C^T\widehat{C}&=0 \label{eq:alg1b}
        \end{align}
        \end{subequations}
        for $X\in\R^{n\times r}$ and $\Pi\in\R^{n\times r}$.
        \State Compute $V=\orth(X)$, $W=\orth(\Pi)$.
        \State Compute $\widehat{A}=(W^TV)^{-1}W^TAV$, $\widehat{B}=(W^TV)^{-1}W^TB$, $\widehat{C}=CV$, $\widehat{N}_k=(W^TV)^{-1}W^TN_kV$, $\widehat{M}_j=V^TM_jV$.
    \EndWhile
    \State $\widehat{A}^{opt}=\widehat{A}, \widehat{B}^{opt}=\widehat{B}, \widehat{C}^{opt}=\widehat{C}, \widehat{N}^{opt}_k=\widehat{N}_k$ and $\widehat{M}_j^{opt}=\widehat{M}_j$. 
    \end{algorithmic}
    \end{algorithm} 
 
\begin{theorem}
    Assume \Cref{alg:BQO_TSIA} converges. Then, the reduced system $\widehat{\mathcal{S}}^{opt}$ given by $\widehat{A}^{opt}, \widehat{B}^{opt}, \widehat{C}^{opt}, \widehat{N}^{opt}_k$ for $k=1,\dots , m$ and $\widehat{M}_j^{opt}$ for $j=1,\dots, p$ fulfills the necessary $\mathcal{H}_2$ optimality conditions \cref{eq:optcond}.
\end{theorem}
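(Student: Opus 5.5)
At a fixed point of the iteration, the reduced system produced in step~4 is the same system that was used in step~2 to compute $X$ and $\Pi$. I will use this self-consistency to show that the converged system is a Petrov--Galerkin projection of the form \cref{eq:red_mat}, up to a change of reduced basis, and then verify the conditions \cref{eq:optcond} directly from the Sylvester equations \cref{eq:alg1}.

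\textbf{Setup at convergence.} Let $\widehat{\mathcal{S}}$ denote the converged reduced system. Let $X,\Pi$ be the solutions of \cref{eq:alg1} computed with $\widehat{\mathcal{S}}$, so \cref{eq:alg1a} coincides with \cref{eq:P_mix} and \cref{eq:alg1b} with \cref{eq:lyap_pi2}. Let $V=\orth(X)$ and $W=\orth(\Pi)$, and assume $X,\Pi$ have full column rank and $W^TV$ is invertible. Then $X=VK$ and $\Pi=WL$ with invertible $K,L\in\R^{r\times r}$. Since reprojecting with these $V,W$ returns $\widehat{\mathcal{S}}$, we have $\widehat{A}=(W^TV)^{-1}W^TAV$, $\widehat{C}=CV$, $\widehat{M}_j=V^TM_jV$, and similarly for $\widehat{B}$ and $\widehat{N}_k$.

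\textbf{Identifying $\widehat{P}$ and $\Psi$.} I plan to show $\widehat{P}=K$, which gives $X=V\widehat{P}$. Multiply \cref{eq:alg1a} from the left by $(W^TV)^{-1}W^T$ and substitute $X=VK$. This yields
\[
\widehat{A}K+K\widehat{A}^T+\sum_k\widehat{N}_kK\widehat{N}_k^T+\widehat{B}\widehat{B}^T=0.
\]
By the uniqueness in \Cref{thm:PQhat}, $K=\widehat{P}$.

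Next multiply \cref{eq:alg1b} from the left by $V^T$. Write $V^T\Pi=V^TWL=:\Psi'$, use $A^TW=\ldots$ via the projection identities, and use $V^TM_jX=V^TM_jV\widehat{P}=\widehat{M}_j\widehat{P}$. I expect
\[
\widehat{A}^T\Psi'+\Psi'\widehat{A}+\sum_k\widehat{N}_k^T\Psi'\widehat{N}_k+2\sum_j\widehat{M}_j\widehat{P}\widehat{M}_j+\widehat{C}^T\widehat{C}=0,
\]
so that $\Psi'=V^T\Pi=\Psi$ by uniqueness of \cref{eq:lyap_pi1}.

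This step is the main obstacle. The projected $A^T$ term requires $V^TA^TW(W^TV)^{-T}$-type manipulations. The clean route is to project \cref{eq:alg1b} with $V^T$ and to rewrite $\widehat{A}^T$ using $W^T A V=(W^TV)\widehat{A}$. Doing so gives $V^TA^T\Pi = V^T A^T W L$, which equals $\widehat{A}^T (V^TW) L$ only if $\Pi$ lies in the range of $W$; this holds here, so the identity should close. The same care is needed for the $N_k$ terms.

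\textbf{Verifying the conditions.} With $X=V\widehat{P}$ and $\Psi=V^T\Pi$, the conditions follow directly:
\begin{itemize}[label={}]
\item \cref{eq:optcond4}: $CX=CV\widehat{P}=\widehat{C}\widehat{P}$.
\item \cref{eq:optcond5}: $X^TM_jX=\widehat{P}V^TM_jV\widehat{P}=\widehat{P}\widehat{M}_j\widehat{P}$.
\item \cref{eq:optcond1}: $\Pi^TX=\Pi^TV\widehat{P}=\Psi\widehat{P}$, using the symmetry of $\Psi$.
\item \cref{eq:optcond2} and \cref{eq:optcond3}: use $\Pi^T=L^TW^T$ and $W^TB=(W^TV)\widehat{B}$, so $\Pi^TB=L^TW^TV\widehat{B}=\Pi^TV\widehat{B}=\Psi\widehat{B}$. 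Analogously, $\Pi^TN_kX=\Pi^TV\widehat{N}_k\widehat{P}=\Psi\widehat{N}_k\widehat{P}$.
\end{itemize}
The key identity throughout is $W^TZ=W^TV\widehat{Z}$ for each projected quantity $Z$. Finally, I would remark that the same conclusion can be phrased as in \Cref{thm:redsysproj}.
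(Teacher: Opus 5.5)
Your argument is correct and is essentially the paper's proof. Both project the two step-2 Sylvester equations, with $(W^TV)^{-1}W^T$ and with $V^T$ respectively, and use uniqueness to identify $\widehat P$ (so $X=V\widehat P$) and $\Psi=V^T\Pi$ before checking the five conditions; you also correctly make explicit that $\Pi$ lying in the range of $W$ is what lets the $A^T$ and $N_k^T$ terms close.

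The one difference is that the paper only assumes the final iterate is a state-space transformation $T$ of the previous one, rather than an exact fixed point. Your argument covers that case once you note, as the paper does, that the $X,\Pi$ computed from the final system equal $X^{opt}T^T$ and $\Pi^{opt}T^{-1}$. These therefore have the same ranges as $V^{opt}$ and $W^{opt}$, which are the bases that define the final system.
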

\begin{proof}
    Let $\bar{A}$, $\bar{B}$, $\bar{C}$, $\bar{N}_k$, $\bar{M}_j$ be the matrices corresponding to the reduced system $\bar{\mathcal{S}}$ in the next to the last step of the algorithm before convergence. Then, $\widehat{\mathcal{S}}^{opt}$ is a state space transformation of $\bar{\mathcal{S}}$, i.e., there exists a nonsingular $T\in\R^{r\times r}$, such that
    \begin{align}\label{eq:sysbar}
        \bar{A}=T^{-1}\widehat{A}^{opt}T, \bar{B}=T^{-1}\widehat{B}^{opt}, \bar{C}=\widehat{C}^{opt}T, \bar{N}_k=T^{-1}\widehat{N}_k^{opt}T, \bar{M}_j=T^{T}\widehat{M}_j^{opt}T. 
    \end{align}
    Next, it holds in step 3 of \Cref{alg:BQO_TSIA} that
    $V^{opt}=X^{opt}F$ and $W^{opt}=\Pi^{opt}G$ with suitably chosen nonsingular matrices $F,G\in \R^{r\times r}$. It follows that
    \begin{align}\label{eq:WVWproj}
        ((W^{opt})^TV^{opt})^{-1}(W^{opt})^T= F^{-1}((\Pi^{opt})^TX^{opt})^{-1}(\Pi^{opt})^{T}.
    \end{align}
    We see from step 2 that
    \begin{align}
        AX^{opt}+X^{opt}\bar{A}^T+ \sum_{k=1}^m N_kX^{opt}\bar{N}_k^T+B\bar{B}^T=0. \label{eq:sylv_opt}
    \end{align}
    Using \cref{eq:WVWproj,eq:sysbar}, we left-multiply \cref{eq:sylv_opt} by $((W^{opt})^TV^{opt})^{-1}(W^{opt})^T$ and right-multiply by $T^T$ to obtain
    \begin{align*}
        0&=((W^{opt})^TV^{opt})^{-1}(W^{opt})^TAX^{opt}T^T+F^{-1}\bar{A}^T T^{T}\\
        &\qquad + \sum_{k=1}^m ((W^{opt})^TV^{opt})^{-1}(W^{opt})^T N_kX^{opt}\bar{N}_k^TT^T+\widehat{B}^{opt}\bar{B}^TT^T\\
        &=\widehat{A}^{opt}F^{-1}T^T+F^{-1}T^T(\widehat{A}^{opt})^T +\sum_{k=1}^m \widehat{N}_k^{opt}F^{-1}T^T(\widehat{N}_k^{opt})^T+\widehat{B}^{opt}(\widehat{B}^{opt})^T.
    \end{align*}
    Due to the uniqueness of the solution, we obtain $\widehat{P}=F^{-1}T^T$. 
    We can perform a similar procedure for the second equation in step 2 by multiplying it  by $(V^{opt})^T$ from the left and by $T^{-1}$ from the right to obtain $\Psi=F^T(X^{opt})^T\Pi^{opt}T^{-1}=T^{-T}(\Pi^{opt})^TX^{opt} F$, using the symmetry of $\Psi$. 
    Next, we again take a look at \cref{eq:sylv_opt} where we 
    substitute $\bar{A},\bar{N}_k,\bar{B}$ using \cref{eq:sysbar} and multiply by $T^T$ from the right to get
    \begin{align*}
        AX^{opt}T^T+X^{opt}T^T(\widehat{A}^{opt})^T+ \sum_{k=1}^m N_kX^{opt}T^T(\widehat{N}^{opt}_k)^T+B(\widehat{B}^{opt})^T=0.
    \end{align*}
    Due to the uniqueness of the solution, we have then $X=X^{opt}T^T$. Analogously for step 3, this yields $\Pi=\Pi^{opt}T^{-1}$.
    Now, we have expressions for $\widehat{P},\Psi,X$ and $\Pi$ which help us to show that the necessary $\sH_2$ optimality conditions in \cref{eq:optcond} are fulfilled:
    \begin{align*}
        \Psi\widehat{P}-\Pi^TX&=T^{-T}(\Pi^{opt})^TX^{opt}FF^{-1}T^T-T^{-T}(\Pi^{opt})^TX^{opt}T^T=0, \\
        \Psi\widehat{B}^{opt}-\Pi^TB&=T^{-T}(\Pi^{opt})^TX^{opt}F\widehat{B}^{opt}-T^{-T}(\Pi^{opt})^TB\\
        &=T^{-T}(\Pi^{opt})^TX^{opt}FF^{-1}((\Pi^{opt})^TX^{opt})^{-1}(\Pi^{opt})^{T}B-T^{-T}(\Pi^{opt})^TB\\
        &=0,\\
        \Psi\widehat{N}_k^{opt}\widehat{P}-\Pi^TN_kX&= 
        T^{-T}(\Pi^{opt})^TX^{opt}F\widehat{N}_k^{opt}F^{-1}T^T        -T^{-T}(\Pi^{opt})^T N_k X^{opt}T^T\\
        &= 
        T^{-T}(\Pi^{opt})^TX^{opt}FF^{-1}((\Pi^{opt})^TX^{opt})^{-1}(\Pi^{opt})^{T} N_kX^{opt}FF^{-1}T^T \\
        & \quad -T^{-T}(\Pi^{opt})^T N_k X^{opt}T^T \\
        &=0,\\
        \widehat{C}^{opt}\widehat{P}-CX &= C X^{opt}FF^{-1}T^T-CX^{opt}T^T=0,  \\
        \widehat{P}\widehat{M}_j^{opt}\widehat{P}-X^TM_jX &= TF^{-T} \widehat{M}_j^{opt} F^{-1}T^T-T(X^{opt})^TM_jX^{opt}T^T \\
        &= TF^{-T} F^T(X^{opt})^TM_j X^{opt}F F^{-1}T^T-T(X^{opt})^TM_jX^{opt}T^T \\
        &=0.
    \end{align*}
    This completes the proof.
\end{proof}

\section{Numerical Experiments} \label{sec:7}
We illustrate the performance of the proposed $\mathcal{H}_2$ based method as outlined in \Cref{alg:BQO_TSIA} on two benchmark examples by comparing it with the balanced truncation approach, including its truncated variant from \cite{FasGP25}.

For clarity, we introduce the following abbreviations for the considered algorithms:
\begin{itemize}
    \item \texttt{BT\_BQO}$(P,Q)$: Balanced truncation for BQO systems; see \cite{FasGP25}.
    \item \texttt{BT\_BQO}$(P_T,Q_T)$: Balanced truncation for BQO systems with truncated Gra\-mians; see \cite{FasGP25}.
    \item \texttt{BQO\_TSIA}: \Cref{alg:BQO_TSIA} where we solve the generalized Sylvester equations \cref{eq:alg1} by a fixed point iteration of regular Sylvester equations. 
    \item \texttt{BQO\_TSIA\_GLGMRES}: \Cref{alg:BQO_TSIA} where we solve the generalized Sylvester equations via a global variant of GMRES \cite{jbilou_global_1999}. 
\end{itemize}

The balancing-based methods \texttt{BT\_BQO}$(P,Q)$ and \texttt{BT\_BQO}$(P_T,Q_T)$ rely on the computation of Gramians via generalized Lyapunov equations or sequences of standard Lyapunov equations, followed by a Petrov--Galerkin projection. Hence, their dominant computational cost stems from solving these Lyapunov equations, which becomes prohibitive for large-scale systems. We use the choice $Q=Q^S$ from \cite{FasGP25}.

In contrast, the proposed $\mathcal{H}_2$ inspired methods require the solution of generalized Sylvester equations or sequences thereof, which can be computationally more efficient when implemented appropriately. We propose two versions of the proposed methods that differ in how these equations are solved. The first version, \texttt{BQO\_TSIA}, corresponds to \Cref{alg:BQO_TSIA}, where the generalized Sylvester equations~\eqref{eq:alg1} are solved via fixed-point iterations following \cref{thm:SylvEq}.  
The second version, denoted by \texttt{BQO\_TSIA\_GLGMRES} solves these generalized Sylvester equations using a global GMRES method \cite{jbilou_global_1999}. The solver is preconditioned by a small number of shifted solves involving the Sylvester operators $AX + X\widehat{A}^T$ and $A^T Y + Y\widehat{A}$. In contrast to \texttt{BQO\_TSIA}, this approach avoids repeated fixed-point iterations and instead treats the Sylvester equations in a single Krylov-based framework.

As with the linear, bilinear, and LQO versions of similar $\sH_2$ iterative methods, one needs to choose a stopping criterion.  One may monitor the relative change in the squared $\mathcal{H}_2$ error,
\[
\eta^{(j)}=\frac{\norm{\mathcal{S}-\widehat{\mathcal{S}}^{(j)}}_{\sH_2}^2}{\norm{\mathcal{S}}_{\sH_2}^2}=\frac{\norm{\mathcal{S}}_{\sH_2}^2 + \norm{\widehat{\mathcal{S}}^{(j)}}_{\sH_2}^2-2\skp{\mathcal{S}}{\widehat{\mathcal{S}}^{(j)}}_{\sH_2}^2}{\norm{\mathcal{S}}_{\sH_2}^2},
\]
where $\widehat{\mathcal{S}}^{(j)}$ denotes the reduced system at iteration $j.$ However, evaluating $\eta^{(j)}$ requires computing $\norm{\mathcal{S}}_{\mathcal{H}_2}$, which is expensive for large-scale systems. To avoid this, we follow \cite{reiter_2024} and instead monitor the "tail" quantity
\[
\tau^{(j)} = \norm{\widehat{\mathcal{S}}^{(j)}}_{\mathcal{H}_2}^2 - 2\skp{\mathcal{S}}{\widehat{\mathcal{S}}^{(j)}}_{\mathcal{H}_2}.
\]
It was shown in \cite{reiter_2024} that the relative change in $\tau^{(j)}$ reflects the behavior of the relative change in $\eta^{(j)}$. Therefore, we employ the stopping criterion
$\frac{\abs{\tau^{(j+1)} - \tau^{(j)}}}{\abs{\tau^{(1)}}} < 10^{-6}$.

Also, we use an initialization technique for the $\mathcal{H}_2$ based methods inspired by \cite{reiter_2024} where the initial $\widehat{A}$ is given as a diagonal matrix with logarithmically spaced entries and the other matrices are initialized as leading columns or rows of the identity matrix. For more details, see \cite{reiter_2024}. Since our main focus here was to establish the $\sH_2$ theory for BQO systems, we do not necessarily test different initialization techniques, e.g., random initialization \cite{benner_birka_2012} or mirror images of actual eigenvalues \cite{gugercin_2008}. This simple choice has worked well for our numerical examples. Nevertheless, we note that different initializations could change the speed of convergence and could also lead to converging to a different, maybe better local minimum.

Furthermore, we rescale the input $u$ to help ensure the existence of the Gramians $P$ and $Q$; see \cite{damm_direct_2008,FasGP25}.  
This rescaling $u\to\frac{1}{\gamma}u$, $0<\gamma<1$, leads to a smaller norm of the terms including $N_k$, so the associated fixed point iterations are more likely to converge. For more details, see \cite[Section 7]{FasGP25}. This also speeds up the fixed-point iterations \cref{eq:eqXitilde} for solving \cref{eq:alg1}.

All examples and algorithms are implemented in \textsc{Matlab}. For the solution of Sylvester equations as well as standard and generalized Lyapunov equations, we employ solvers from the \textsc{MESS} library. More precisely, we use the functions \texttt{mess\_lyap} and \texttt{mess\_sylvester\_sparse\_dense} (slightly modified) with default settings, as well as the function \texttt{mess\_lyapunov\_bilinear} with the following options:
\begin{itemize}
\item residual norm tolerance 
\texttt{opts.blyap.res\_tol}$=10^{-8}$, 
\item relative difference tolerance 
\texttt{opts.blyap.rel\_diff\_tol}$=10^{-7}$,  
\item maximum iterations 
\texttt{opts.blyap.maxiter}$=50$
\end{itemize}(see \cite{Mess}).
The code for the numerical examples is available on ZENODO (\url{https://doi.org/10.5281/zenodo.21477316}). 
Computations were run on an Intel(R) Core(TM) i7-1255U CPU @ 1.70 GHz with 16 GB RAM using MATLAB 2025b.

\subsection{Nonlinear RC example}

As a first example, we consider a nonlinear RC circuit from \cite{bai_projection_2006,benner_lyapunov_2011}, where the nonlinear dynamics are approximated via Carleman bilinearization \cite{rugh_nonlinear_1981}. This transformation significantly increases the system dimension, making model order reduction essential.
The resulting  BQO system has dimension $n  =40,200$ after bilinearization. We define the output as
\[
y = [1,0,\dots,0]x + \frac{1}{200^2} x^T
\begin{bmatrix}
I_{200} & 0 \\
0 & 0
\end{bmatrix} x,
\]
thereby extending the linear output from \cite{bai_projection_2006} by a quadratic term. This term corresponds to the root mean squared error (RMSE) of the original states; see, e.g., \cite{ReiW24}.
To ensure the existence of the Gramians required by the balancing-based methods, we introduce a scaling parameter $\gamma = 0.1$, according to \cite{FasGP25}. 

We test our methods on this SISO example, where we use the $u(t)=e^{-t}$ for simulation of the system and monitor the output errors between the original output $y(t)$ and the corresponding reduced order outputs $\widehat{y}(t)$. 

First, we take a look at the relative $\sH_2$ norm error between the original full system and the reduced systems. For that, we study the $\sH_2$ error from \Cref{cor:H2err} in the relative formulation as $\frac{\norm{\mathcal{S}-\widehat{\mathcal{S}}}_{\sH_2}}{\norm{\mathcal{S}}_{\sH_2}}$.
In the left-pane in \Cref{fig:1}, we plot the decay of the relative $\sH_2$ error over different reduced orders for the studied methods. We see that the balancing methods yield reduced systems with a slightly worse $\sH_2$ norm error than the systems produced by the $\sH_2$ inspired methods. Here, \texttt{BQO\_TSIA} shows convergence issue for order $8$, but is reliable for other orders.

Next, we evaluate the outputs at $N_t=10^3$ equidistant points $t_i, i=1,\dots, N_t$ in a time interval $[0,2]$. Defining the discrete-time output at these points by $y_i\approx y(t_i)$ and $\widehat{y}_i \approx \widehat{y}(t_i)$, respectively, allows us to look at the maximal output error in the Euclidean norm. The relative expression $\frac{\max_i\norm{y_i-\widehat{y}_i}}{\max_i\norm{y_i}}$ is displayed in the right-pane in \Cref{fig:1} for ascending reduced orders. We observe that the balancing algorithms \texttt{BT\_BQO}$(P,Q)$ and \texttt{BT\_BQO}$(P_T,Q_T)$ show identical results 
and provide a better output error for smaller orders from $2$ to $3$. But for higher orders, the $\sH_2$ inspired methods produce a slightly smaller output error. Comparing these $\sH_2$ inspired methods, we see small differences but the same tendency which can be explained by converging to only local minima in some cases.

\begin{figure}[h]
\centering
    \includegraphics[width=1.0\textwidth]{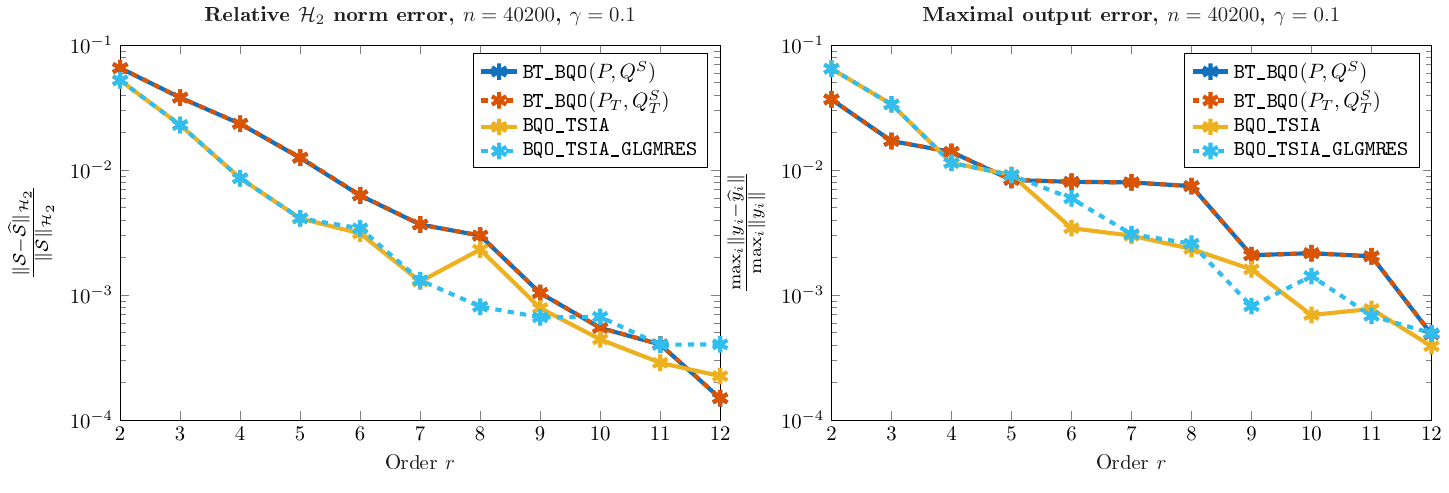}
    \caption{RC example, Left plot: Relative $\sH_2$ norm errors. Right plot: Maximal output errors.}
    \label{fig:1}
\end{figure}

Regarding the timings in \Cref{tab:1}, \texttt{BT\_BQO}$(P,Q)$ exhibits the highest computational cost, as it requires the computation of the full Gramians $P$ and $Q$. In contrast, \texttt{BT\_BQO}$(P_T,Q_T)$ is faster since it relies on approximate Gramians. Balancing-based methods have  constant run times across different reduced orders, whereas the computational cost of the $\mathcal{H}_2$ inspired methods depends on the chosen order.
Among all methods, \texttt{BQO\_TSIA\_GLGMRES} is the fastest in this example, while still delivering accurate approximations, as demonstrated in \Cref{fig:1}. For the specific order $r=10$ the method \texttt{BQO\_TSIA\_GLGMRES} takes much longer compared to the other reduced orders. Here, one can speed up the method if we initialize the reduced system by extending a converged system of smaller reduced order. However, to keep the timings and methods uniform, we retain the original formulation here. 

\begin{table}[htbp]
\footnotesize
\caption{RC example, Timings in sec}\label{tab:1}
\begin{center}
  \begin{tabular}{|c|c|c|c|c|c|c|} \hline
    Reduced Order $r$ & 2 & 4 & 6 & 8 & 10 & 12 \\
    \hline
    \texttt{BT\_BQO}$(P,Q)$ &150.91&150.91&150.91&150.91&150.91&150.91 \\ 
    \texttt{BT\_BQO}$(P_T,Q_T)$ & 21.48 & 21.48 & 21.48 & 21.48 & 21.48 & 21.48 \\
    \texttt{BQO\_TSIA}  & 11.68 & 20.34 & 87.13 & 60.62 & 60.26 & 57.49 \\
    \texttt{BQO\_TSIA\_GLGMRES} & 0.52 & 1.01 & 10.73 & 4.07 & 35.99 & 4.67 \\ 
    \hline  
  \end{tabular}
\end{center}
\end{table}

\subsection{Heat equation}
The next example is a MIMO example from \cite{benner_lyapunov_2011} which describes a bilinear controlled heat transfer system with mixed Dirichlet and Robin boundary conditions:
\begin{align*}
    x_t= &\Delta x \text{ on } \Omega :=[0,1]^2,\\
    n\cdot\nabla x = &u_1(x-1) \text{ on } \Gamma_1:=\{0\}\times [0,1),\\
    n\cdot\nabla x = &u_2(x-1) \text{ on } \Gamma_2:=(0,1]\times\{0\},\\
    x= &0 \text{ on } \Gamma_3:=\{1\}\times (0,1] \text{ and } \Gamma_4:=[0,1)\times \{1\}.
\end{align*}
These equations can be described by bilinear dynamics if we use finite differences for a discretization on an equidistant $k\times k$ grid with $k=50$. We set the output for this system as 
$
    y_1(t)=Cx(t), \,  y_2(t)=x(t)^TMx(t),
$
where $C=\frac{1}{k^2}[1,\dots,1]$, $M=\frac{1}{k^2} I_{k^2}$. Here, $y_1(t)$ shows the average temperature as suggested in \cite{benner_lyapunov_2011} whereas $y_2(t)$ represents the root mean squared error as in the RC example. We apply the decaying inputs $u_j(t)=\cos(j \pi t)\, e^{-t}, j=1,2$ to the system and study the dynamics over the time interval $t\in [0,5]$ with zero initial condition.

\begin{figure}[h]
\centering
    \includegraphics[width=1.0\textwidth]{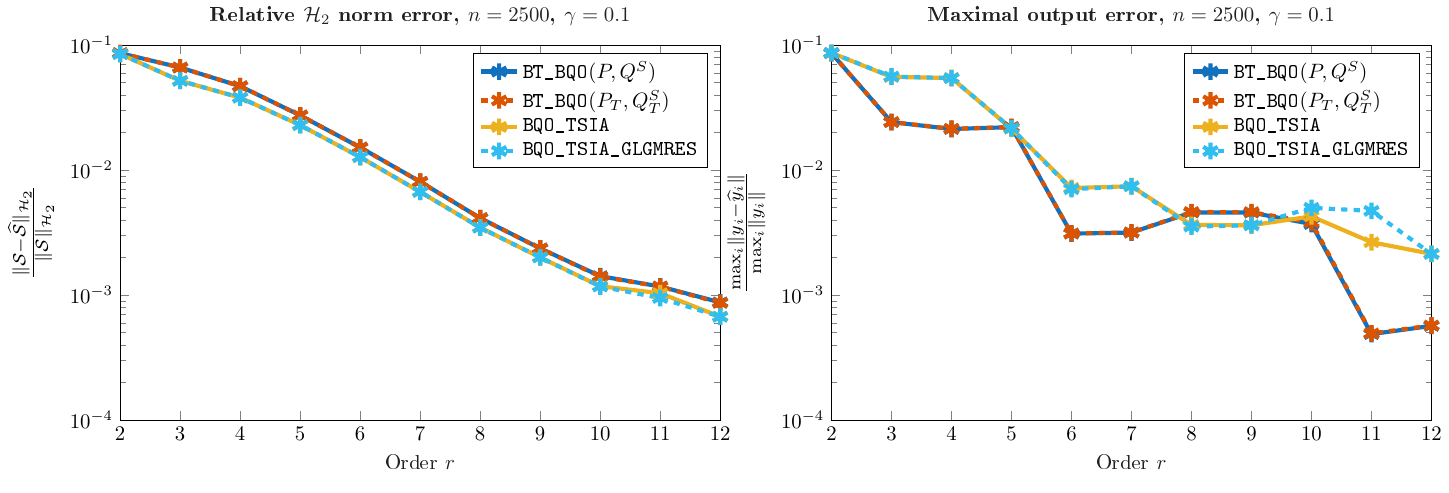}
    \caption{Heat equation example, Left plot: Relative $\sH_2$ norm errors. Right plot: Maximal output errors. }
    \label{fig:3}
\end{figure}
The left-pane in \Cref{fig:3} shows the relative $\mathcal{H}_2$ errors of the reduced systems. We see that for all reduced orders the $\sH_2$ inspired methods provide reduced systems with smaller $\sH_2$ errors than the balancing methods. 

In the right-pane plot in \Cref{fig:3}, the maximal output error, which is extended consistently to the MIMO setting, is displayed for ascending reduced orders.  
The output errors of the reduced systems produced by the balancing  methods oscillate rather than decrease monotonically, whereas the  proposed $\mathcal{H}_2$-based methods exhibit a nearly monotone  decay in the output error as the reduced order increases.

In \Cref{tab:2}, the timings for the different methods and orders are displayed. The balancing method \texttt{BT\_BQO}$(P,Q)$ is the most time-consuming whereas \texttt{BT\_BQO}$(P_T,Q_T)$ is slightly slower than the original \texttt{BQO\_TSIA} here for smaller orders. As with the previous example, \texttt{BQO\_TSIA\_GLGMRES} is the fastest, except for the $r=10$ case.

\begin{table}[htbp]
\footnotesize
\caption{Heat equation example, Timings in sec}\label{tab:2}
\begin{center}
  \begin{tabular}{|c|c|c|c|c|c|c|} \hline
    Reduced Order $r$ & 2& 4 & 6 & 8 & 10 & 12 \\
    \hline
    \texttt{BT\_BQO}$(P,Q)$ & 16.09 & 16.09 & 16.09& 16.09& 16.09& 16.09 \\
    \texttt{BT\_BQO}$(P_T,Q_T)$ & 2.02& 2.02& 2.02& 2.02& 2.02& 2.02 \\
    \texttt{BQO\_TSIA}  & 0.9 & 0.77& 1.76 & 1.4 & 2.65 & 2.29 \\  
    \texttt{BQO\_TSIA\_GLGMRES} & 0.53 & 0.4 & 0.81 & 0.7 & 4.5 & 1.27\\
    \hline
  \end{tabular}
\end{center}
\end{table}

\section{Conclusion}

In this paper, we developed a complete $\mathcal{H}_2$ framework for 
model reduction of BQO systems, which generalize both bilinear and 
LQO systems. Building on the Gramians and associated Lyapunov and 
Sylvester equations, we introduced an 
$\mathcal{H}_2$ inner product and norm for BQO systems. We then 
established output bounds  in terms of the 
$\mathcal{H}_2$ norm, motivating $\mathcal{H}_2$ optimal model 
reduction as the problem of minimizing the $\mathcal{H}_2$ system 
error. We derived the first-order optimality conditions for this problem and
proposed an algorithm that, upon convergence, produces a reduced 
BQO system satisfying these conditions. The proposed framework 
recovers the existing $\mathcal{H}_2$ theories for bilinear and LQO 
systems as special cases. Numerical experiments on two examples 
demonstrated that the proposed algorithm competes favorably with 
balanced truncation for BQO systems, and that efficient 
implementation of the arising Sylvester equations yields a method 
that is often faster while maintaining high approximation quality.

\section{Acknowledgements}
This work was supported by a fellowship of the German Academic Exchange
Service (DAAD). Thanks to Petar Mlinari\'{c} for a hint on the Lagrange function in \Cref{thm:grad} and to Tom Werner for the code on global GMRES for solving generalized Sylvester equations.

\section{Credit author statement}
 \textbf{Heike Faßbender: }Writing - Original Draft, Conceptualization, Supervision; \textbf{Serkan Gugercin: } Writing - Original Draft, Conceptualization, Methodology; \textbf{Till Peters:} Conceptualization, Methodology, Software, Investigation, Writing - Original Draft

\section{Disclosure statement} 

We wish to confirm that there are no known conflicts of interest associated with this publication and there has been no significant financial support for this work that could have influenced its outcome. Part of this work was completed during a research stay of the third author at Virginia Tech supported by a fellowship of the German Academic Exchange Service (DAAD). The work of Gugercin was supported in part by the National Science Foundation (NSF), United States under Grant No. CMMI-2130695.
We confirm that the manuscript has been read and approved by all named authors and that there are no other persons who satisfied the criteria for authorship but are not listed. We further confirm that the order of authors listed in the manuscript has been approved by all of us.
We confirm that we have given due consideration to the protection of intellectual property associated with this work and that there are no impediments to publication, including the timing of publication, with respect to intellectual property. In so doing we confirm that we have followed the regulations of our institutions concerning intellectual property.
We understand that the Corresponding Author is the sole contact for the Editorial process. He is responsible for communicating with the other authors about progress, submissions of revisions and final approval of proofs.

\bibliographystyle{siamplain}
\bibliography{bib_H2bqo}

@book{ant_beattie_gug_2020,
author = {Antoulas, A. C. and Beattie, C. A. and Güğercin, S.},
title = {Interpolatory Methods for Model Reduction},
publisher = {SIAM},
year = {2020},
doi = {10.1137/1.9781611976083},
address = {Philadelphia, PA},
edition   = {},
}

@article{xu_optimal_2011,
	title = {Optimal $\mathcal{H}_2$ {Model} {Reduction} for {Large} {Scale} {MIMO} {Systems} via {Tangential} {Interpolation}},
	volume = {8},
	copyright = {Copyright (c) 2025 COPYRIGHT: © Global Science Press},
	issn = {2617-8710},
	language = {en},
	number = {1},
	journal = {Int. J. Numer. Anal. Model.},
	author = {Xu, Y. and Zeng, T.},
	year = {2011},
	pages = {174--188},
}

@article{jbilou_global_1999,
	title = {Global {FOM} and {GMRES} algorithms for matrix equations},
	volume = {31},
	issn = {0168-9274},
	doi = {10.1016/S0168-9274(98)00094-4},
	number = {1},
	journal = {Appl. Numer. Math.},
	author = {Jbilou, K. and Messaoudi, A. and Sadok, H.},
	month = sep,
	year = {1999},
	pages = {49--63},
}

@article{flagg_multipoint_2015,
	title = {Multipoint {Volterra} {Series} {Interpolation} and $\mathcal{H}_2$ {Optimal} {Model} {Reduction} of {Bilinear} {Systems}},
	volume = {36},
	issn = {0895-4798},
	doi = {10.1137/130947830},
	number = {2},
	journal = {SIAM J. Matrix Anal. Appl.},
	author = {Flagg, Garret and Gugercin, Serkan},
	month = jan,
	year = {2015},
	pages = {549--579},
}

@book{willems70,
    author = {Willems, J. L.},
    title = {Stability Theory of Dynamical Systems},
    publisher = {Wiley, New York},
    year = {1970}
}

@unpublished{Gos24, 
	title= {Extending balanced truncation for model reduction of bilinear systems with quadratic outputs},
	author = {Ion Victor Gosea},
	note= {Talk at 24th GAMM Workshop on Applied and Numerical Linear Algebra 2024, September 23 - 24, 2024, G\"ottingen, Germany},
}

@misc{FasGP25,
	title = {Bilinear {Quadratic} {Output} {Systems} and {Balanced} {Truncation}},
	doi = {10.48550/arXiv.2507.03684},
	publisher = {arXiv},
	author = {Faßbender, Heike and Gugercin, Serkan and Peters, Till},
	month = jul,
	year = {2025},
}

@book{mohler,
  title={{Nonlinear systems (vol. 2): applications to bilinear control}},
  author={Mohler, R.R.},
  year={1991},
  publisher={Prentice-Hall, Inc. Upper Saddle River, NJ, USA}
}

@article{hart13,
        author = {Hartmann, C. and Sch\"afer-Bung, C and Th\"ons-Zueva, A.},
        title = {Balanced averaging of bilinear systems with applications 
        to stochastic control}, 
        journal = {SIAM J. Control Optim.}, 
        volume = {51},
        pages = {2356--2378}, 
        year = {2013}}

@article{PulA19,
  title={Balanced truncation for model order reduction of linear dynamical systems with quadratic outputs},
  author={Pulch, Roland and Narayan, Akil},
  journal={SIAM J. Sci. Comput.},
  volume={41},
  number={4},
  pages={A2270--A2295},
  year={2019},
  publisher={SIAM},
  doi={10.1137/17M1148797}
}

@article{Pul23,
  title={Energy-based model order reduction for linear stochastic {G}alerkin systems of second order},
  author={Pulch, Roland},
  journal={PAMM},
  volume={23},
  number={3},
  pages={e202300038},
  year={2023},
  publisher={Wiley Online Library},
  doi={10.1002/pamm.20230003833}
}

@article{YueM13,
  title={Accelerating optimization of parametric linear systems by model order reduction},
  author={Yue, Yao and Meerbergen, Karl},
  journal={SIAM J. Optim.},
  volume={23},
  number={2},
  pages={1344--1370},
  year={2013},
  publisher={SIAM},
  doi={10.1137/120869171}
}

@article{ReiW24,
  title={Interpolatory model reduction of dynamical systems with root mean squared error},
  author={Reiter, Sean and Werner, Steffen W. R.},
  journal={IFAC-PapersOnLine},
  volume={59},
  number={1},
  pages={385--390},
  year={2025},
  publisher={Elsevier},
  doi={10.1016/j.ifacol.2025.03.066}
}

@article{VanVNLM12,
  title={Model reduction for dynamical systems with quadratic output},
  author={Van Beeumen, Roel and Van Nimmen, Katrien and Lombaert, Geert and Meerbergen, Karl},
  journal={Internat. J. Numer. Methods Engrg.},
  volume={91},
  number={3},
  pages={229--248},
  year={2012},
  publisher={Wiley Online Library},
  doi={10.1002/nme.4255}
}

@article{MehU23,
  title={Control of port-{H}amiltonian differential-algebraic systems and applications},
  author={Mehrmann, Volker and Unger, Benjamin},
  journal={Acta Numerica},
  volume={32},
  pages={395--515},
  year={2023},
  publisher={Cambridge University Press},
  doi={10.1017/S0962492922000083}
}

@inproceedings{Van06,
  title={Port-{H}amiltonian systems: an introductory survey},
  author={van der Schaft, Arjan},
  booktitle={International congress of mathematicians},
  pages={1339--1365},
  year={2006},
  organization={European Mathematical Society Publishing House (EMS Ph)},
  doi={10.4171/022-3/65}
}

@phdthesis{mlinaric_structure-preserving,
    school = {OVGU Magdeburg, Germany},
	title = {Structure-{Preserving} {Model} {Order} {Reduction} for {Network} {Systems}},
	author = {Mlinaric, Petar},
    year = {2020},
    doi = {10.25673/33570},
}

@article{yan_approximate_1999,
	title = {An {Approximate} {Approach} to {Optimal} {Model} {Reduction}},
	volume = {44},
	language = {en},
	number = {7},
	journal = {IEEE Trans. Automat. Control},
	author = {Yan, Wei-Yong and Lam, James},
	year = {1999},
}

@article{benner_h2QB_2018,
	title = {$\mathcal{H}_2$-{Quasi}-{Optimal} {Model} {Order} {Reduction} for {Quadratic}-{Bilinear} {Control} {Systems}},
	volume = {39},
	issn = {0895-4798, 1095-7162},
	doi = {10.1137/16M1098280},
	language = {en},
	number = {2},
	journal = {SIAM J. Matrix Anal. Appl.},
	author = {Benner, Peter and Goyal, Pawan and Gugercin, Serkan},
	month = jan,
	year = {2018},
	pages = {983--1032},
}

@book{rugh_nonlinear_1981,
	title = {Nonlinear {System} {Theory}: {The} {Volterra}/{Wiener} {Approach}},
	isbn = {978-0-8018-2549-1},
	shorttitle = {Nonlinear {System} {Theory}},
	language = {en},
	publisher = {Johns Hopkins University Press},
	author = {Rugh, Wilson J.},
	year = {1981},
}

@article{bai_projection_2006,
	series = {Special {Issue} on {Order} {Reduction} of {Large}-{Scale} {Systems}},
	title = {A projection method for model reduction of bilinear dynamical systems},
	volume = {415},
	issn = {0024-3795},
	doi = {10.1016/j.laa.2005.04.032},
	number = {2},
	journal = {Linear Algebra Appl.},
	author = {Bai, Zhaojun and Skoogh, Daniel},
	month = jun,
	year = {2006},
	pages = {406--425},
}

@article{damm_direct_2008,
	title = {Direct methods and {ADI}-preconditioned {Krylov} subspace methods for generalized {Lyapunov} equations},
	volume = {15},
	issn = {1099-1506},
	doi = {10.1002/nla.603},
	language = {en},
	number = {9},
	journal = {Numer. Linear Algebra Appl.},
	author = {Damm, T.},
	year = {2008},
	pages = {853--871},
}

@Misc{Mess,
  author =       {Saak, J. and K\"{o}hler, M. and Benner, P.},
  title =        {{M-M.E.S.S.}-3.0 -- The Matrix Equations Sparse Solvers
                  library},
  month =        aug,
  year =         2023,
  doi =          {10.5281/zenodo.7701424},
  key =          {MMESS}
}

@Book{adrianova_1995,
 Author = {Adrianova, L. Ya.},
 Title = {Introduction to linear systems of differential equations. {Transl}. from the {Russian} by {Peter} {Zhevandrov}},
 FSeries = {Translations of Mathematical Monographs},
 Series = {Transl. Math. Monogr.},
 ISSN = {0065-9282},
 Volume = {146},
 ISBN = {0-8218-0328-X},
 Year = {1995},
 Publisher = {American Mathematical Society},
 Language = {English},
 zbMATH = {819910},
 Zbl = {0844.34001}
}

@Book{HorJ94,
  Author = {Horn, Roger A. and Johnson, Charles R.},
  Title = {Topics in matrix analysis. 1st paperback ed. with corrections},
  Edition = {1st pbk with corr.},
  ISBN = {0-521-46713-6},
  Year = {1994},
  Publisher = {Cambridge: Cambridge University Press},
  Language = {English},
  zbMATH = {635657},
  Zbl = {0801.15001}
}

@mastersthesis{padhi_2024,
	address = {Pashan, Pune India},
    school = {Indian Institute of Science Education and Research Pune},
	type = {Master's thesis},
	title = {Model {Order} {Reduction} of {Nonlinear} {Dynamical} {Systems}},
	author = {Padhi, Reetish},
	month = may,
	year = {2024},
}

@book{GolubVanLoan4th,
author = {Golub, Gene H. and Van Loan, Charles F.},
title = {Matrix Computations - 4th Edition},
publisher = {Johns Hopkins University Press},
year = {2013},
doi = {10.1137/1.9781421407944},
address = {Philadelphia, PA},
edition   = {},
}

@inproceedings{al-baiyat_new_1993,
	title = {A new model reduction scheme for k-power bilinear systems},
	doi = {10.1109/CDC.1993.325196},
	booktitle = {Proceedings of 32nd {IEEE} {Conference} on {Decision} and {Control}},
	author = {Al-Baiyat, S.A. and Bettayeb, M.},
	month = dec,
	year = {1993},
	pages = {22--27 vol.1},
}

@article{benner_birka_2012,
	title = {Interpolation-{Based} $\mathcal{H}_2$-{Model} {Reduction} of {Bilinear} {Control} {Systems}},
	volume = {33},
	issn = {0895-4798},
	doi = {10.1137/110836742},
	number = {3},
	journal = {SIAM J. Matrix Anal. Appl.},
	author = {Benner, Peter and Breiten, Tobias},
	month = jan,
	year = {2012},
	pages = {859--885},
}

@article{gugercin_2008,
	title = {$\mathcal{H}_2$ {Model} {Reduction} for {Large}-{Scale} {Linear} {Dynamical} {Systems}},
	volume = {30},
	issn = {0895-4798},
	doi = {10.1137/060666123},
	number = {2},
	journal = {SIAM J. Matrix Anal. Appl.},
	author = {Gugercin, S. and Antoulas, A. C. and Beattie, C.},
	month = jan,
	year = {2008},
	pages = {609--638},
}

@article{redmann_bilinear_2021,
	title = {Bilinear {Systems}---{A} {New} {Link} to $\mathcal{H}_2$-norms, {Relations} to {Stochastic} {Systems}, and {Further} {Properties}},
	volume = {59},
	issn = {0363-0129, 1095-7138},
	doi = {10.1137/19M1304106},
	language = {en},
	number = {4},
	journal = {SIAM J. Control Optim.},
	author = {Redmann, Martin},
	month = jan,
	year = {2021},
	pages = {2477--2497},
}

@article{benner_lyapunov_2011,
	title = {Lyapunov {Equations}, {Energy} {Functionals}, and {Model} {Order} {Reduction} of {Bilinear} and {Stochastic} {Systems}},
	volume = {49},
	issn = {0363-0129},
	doi = {10.1137/09075041X},
	number = {2},
	journal = {SIAM J. Control Optim.},
	author = {Benner, Peter and Damm, Tobias},
	month = jan,
	year = {2011},
	pages = {686--711},	
}

@article{zhang_2002,
	title = {On \textit{{H}}2 model reduction of bilinear systems},
	volume = {38},
	issn = {0005-1098},
	doi = {10.1016/S0005-1098(01)00204-7},
	number = {2},
	journal = {Automatica},
	author = {Zhang, Liqian and Lam, James},
	month = feb,
	year = {2002},
	pages = {205--216},
}

@article{benner_lqo_2022,
	title = {Gramians, {Energy} {Functionals}, and {Balanced} {Truncation} for {Linear} {Dynamical} {Systems} {With} {Quadratic} {Outputs}},
	volume = {67},
	issn = {1558-2523},
	doi = {10.1109/TAC.2021.3086319},
	number = {2},
	journal = {IEEE Trans. Automat. Control},
	author = {Benner, Peter and Goyal, Pawan and Duff, Igor Pontes},
	month = feb,
	year = {2022},
	pages = {886--893},
}

@article{benner_balanced_2024,
	title = {Balanced truncation for quadratic-bilinear control systems},
	volume = {50},
	issn = {1572-9044},
	doi = {10.1007/s10444-024-10186-9},
	language = {en},
	number = {4},
	journal = {Adv. Comput. Math. },
	author = {Benner, Peter and Goyal, Pawan},
	month = aug,
	year = {2024},
	pages = {88},
}

@misc{reiter_2024,
	title = {$\mathcal{H}_2$ optimal model reduction of linear systems with multiple quadratic outputs},
	language = {en},
	publisher = {arXiv},
	author = {Reiter, Sean and Duff, Igor Pontes and Gosea, Ion Victor and Gugercin, Serkan},
	month = may,
	year = {2024},
	note = {arXiv:2405.05951 [cs, eess, math]},
}

@incollection{BenB17,
  author    = {Peter Benner and Tobias Breiten},
  title     = {Model Order Reduction Based on System Balancing},
  booktitle = {Model Reduction and Approximation: Theory and Algorithms},
  chapter   = {6},
  pages     = {261--295},
  year      = {2017},
  publisher = {SIAM},
  doi       = {10.1137/1.9781611974829.ch6},
}

@article{GugA04,
  title={A survey of model reduction by balanced truncation and some new results},
  author={Gugercin, Serkan and Antoulas, Athanasios C},
  journal={International Journal of Control},
  volume={77},
  number={8},
  pages={748--766},
  year={2004},
doi = {10.1080/00207170410001713448},
}

@inproceedings{Wil70,
  title={Optimum solution of model-reduction problem},
  author={Wilson, David~A.},
  booktitle={Proceedings of the Institution of Electrical Engineers},
  volume={117},
  pages={1161--1165},
  year={1970},
  organization={IET},
}
\end{document}